%% file: main.tex
\documentclass{article}

\usepackage{microtype}
\usepackage{graphicx}
\usepackage{subcaption}
\usepackage{booktabs} %
\usepackage{makecell}
\usepackage{hyperref}

\usepackage[preprint]{icml2026}
\input{header}

\usepackage{amsmath}
\usepackage{amssymb}
\usepackage{mathtools}
\usepackage{amsthm}

\theoremstyle{plain}
\newtheorem{theorem}{Theorem}[section]

\newtheorem{lemma}[theorem]{Lemma}

\theoremstyle{definition}
\newtheorem{definition}[theorem]{Definition}
\newtheorem{assumption}[theorem]{Assumption}
\theoremstyle{remark}
\newtheorem{remark}[theorem]{Remark}

\crefname{problem}{problem}{problems}
\Crefname{problem}{Problem}{Problems}
\creflabelformat{problem}{(#2#1#3)}
\crefname{lemma}{Lemma}{Lemmas}  
\Crefname{lemma}{Lemma}{Lemmas}

\crefname{app}{Appendix}{Appendices}
\Crefname{app}{Appendix}{Appendices}

\usepackage[textsize=tiny]{todonotes}

\icmltitlerunning{Decentralized Optimization with Mixed Affine Constraints}

\begin{document}

\twocolumn[
  \icmltitle{Complexity of Decentralized Optimization with Mixed Affine Constraints}

  \begin{icmlauthorlist}
    \icmlauthor{Demyan Yarmoshik}{miriai}
    \icmlauthor{Nhat Trung Nguyen}{miriai}
    \icmlauthor{Alexander Rogozin}{miriai}
    \icmlauthor{Alexander Gasnikov}{miriai,innop,hdi}
  \end{icmlauthorlist}

  \icmlaffiliation{miriai}{MIRIAI, Moscow, Russia}
  \icmlaffiliation{innop}{Innopolis University, Innopolis, Russia}
  \icmlaffiliation{hdi}{HDI Lab, HSE University, Moscow, Russia}

  \icmlcorrespondingauthor{Demyan Yarmoshik}{yarmoshik.d@miriai.org}

  \icmlkeywords{Machine Learning, ICML}

  \vskip 0.3in
]

\printAffiliationsAndNotice{}  %

\begin{abstract}
This paper considers decentralized optimization of convex functions with mixed affine equality constraints involving both local and global variables. Constraints on global variables may vary across different nodes in the network, while local variables are subject to coupled and node-specific constraints. Such problem formulations arise in machine learning applications, including federated learning and multi-task learning, as well as in resource allocation and distributed control. We analyze this problem under smooth and non-smooth assumptions, considering both strongly convex and general convex objective functions. Our main contribution is an optimal algorithm for the smooth, strongly convex regime, whose convergence rate matches established lower complexity bounds. We further provide near-optimal methods for the remaining cases. 
\end{abstract}

\section{Introduction}

We consider distributed optimization problems in which the objective function is decomposed across multiple computational nodes and the decision variables are subject to both local and global affine constraints. Specifically, we consider problems of the form
\begin{align}\label{prob:opt_mixed_constraints_intro}
	\min_{x_1, \ldots, x_n, \tilde x}~ &\sumin f_i(x_i, \tilde x) \tag{P} \\
	\text{s.t.}~ &\sumin (A_i x_i - b_i) = 0,~ C_i x_i = c_i,~ \widetilde C_i \tilde x = \tilde c_i. \nonumber
\end{align}
There are two groups of variables: $x_i \in \R^{d_i}$ are individual for each node while $\tilde x \in \R^{\tilde{d}}$ is common for all the nodes.
Each node locally owns $x_i, A_i, b_i, C_i, c_i, \widetilde C_i, \tilde{c}_i$ and $f_i$, where $A_i \in \mathbb{R}^{m \times d_i}$, $b_i \in \mathbb{R}^m$, $C_i \in \mathbb{R}^{p_i \times d_i}$, $c_i \in \mathbb{R}^{p_i}$, $\widetilde C_i \in \mathbb{R}^{\tilde p_i \times \tilde d}$, $\tilde c_i \in \mathbb{R}^{\tilde p_i}$ and $f_i$ is a convex function. Each node can perform matrix-vector multiplications with its matrices and compute function values and gradients of $f_i$.

The nodes (agents) are organized into a computational network $\cG = (\cV, \cE)$, where $\cV = \braces{1, \ldots, n}$ is the set of vertices and $\cE$ is the set of edges. Each agent can exchange information with those nodes to which it is connected by an edge. The constraints in \eqref{prob:opt_mixed_constraints_intro} can be divided into three types:

\textbf{Coupled Constraints.} These constraints link the variables across multiple nodes and require coordination among nodes to satisfy. The main coupled constraints are
\begin{equation}\label{eq:coupled_cons}
\sum_{i=1}^n (A_i x_i - b_i) = 0, %
\end{equation}
which jointly restrict the variables \( x_i \).

\textbf{Local Constraints.} These constraints depend only on the local variables and data at each node \(i\). Specifically, 
\begin{equation}
C_i x_i = c_i,
\end{equation}
which can be enforced independently by node \(i\) without coordination with others. They typically represent local restrictions or operational limits specific to each node.

\textbf{Shared Variable Constraints.} This type of constraint is imposed on the shared variable $\tilde x$. These constraints take the form
\begin{equation}
\widetilde C_i \tilde x = \tilde c_i,
\end{equation}
The variable $\tilde x$ is the same at all the nodes, but the constraint matrices $\widetilde C_i$ are individual.

If the problem includes a combination of coupled constraints, local and shared variable constraints, we call it a problem with \textit{mixed constraints}. 
Our study is largely motivated by classical work on distributed yet centralized algorithms \cite{boyd2011distributed}, which shows how playing with affine-constraint reformulations enable decomposition and distributed optimization of various problems.
Our main goal is to bring this level of flexibility to decentralized optimization, together with development of theoretical tools for comparing the complexity of different reformulations.

Problems of type \eqref{prob:opt_mixed_constraints_intro} have applications in several machine learning problems, including horizontal and vertical federated learning, distributed multi-task learning, and control of distributed energy systems.

\paragraph{Horizontal Federated Learning (HFL, or Consensus Optimization).}
Let the variables locally held by the nodes be connected by consensus constraints, i.e.,\ $x_1 = \ldots = x_n$. We come to the problem of consensus optimization.
\begin{equation}\label{prob:consensus_cons}
    \begin{aligned}
      \min_{x_1,\ldots,x_n}&~ \sum_{i=1}^n f_i(x_i) \\
      \text{s.t. }~ &x_1 = \ldots = x_n.
    \end{aligned}
\end{equation}
This is a standard decentralized optimization (or horizontal federated learning) problem statement \cite{boyd2011distributed,yang2019federated,kairouz2021advances,nedic2009distributed,scaman2017optimal,kovalev2021adom,kovalev2021lower,li2021accelerated}. The training data is distributed between computational entities by samples, i.e., each node holds a part of the dataset. During the optimization process, the agents aim to maintain equal model weights $x_i$.

Coupled constraints are capable of expressing consensus constraints: for instance, cyclic equalities $x_1 - x_2 = 0$, $x_2 - x_3 = 0, \ldots,$ $x_n - x_1 = 0$ can be written as $\sumin A_i x_i = 0$, where $x_i \in \R^d$, $A_1 = \pmat{I_d & 0_d & \cdots & 0_d & -I_d}^\top$ and so on.
It is interesting, though, that all black-box reductions of consensus constraints to coupled constraints are provably ineffective for first-order decentralized algorithms, as any choice of matrices $A_i$ increases communication complexity by at least a factor of $\sqrt{n}$ compared with optimal specialized consensus optimization algorithms \citep[Appendix A]{yarmoshik2024decentralized}. 
This fact motivates the explicit differentiation between consensus and coupled-constrained variables in \eqref{prob:opt_mixed_constraints_intro}.

\paragraph{Vertical Federated Learning (VFL)}
Unlike consensus optimization, where the data is distributed sample-wise between the nodes, in VFL the data is distributed feature-wise \cite{liu2024vertical,chen2020vafl}. Each node corresponds to a party possessing its local subset of weights $X_i$,
and submatrix of features $F_i$. 
In deep VFL each party transforms its local features into an intermediate representation, which is then consumed by a shared ``top'' model.
A simple instance keeps the party-side mapping linear, $H_i \coloneqq F_i X_i \in \R^{N\times m}$ with $X_i \in \R^{d_i\times m}$, aggregates $Z \coloneqq \sumin H_i$, and predicts $\hat y = g(Z;\widetilde X)$ with top-model parameters $\widetilde X$.
This yields the mixed-constraints formulation
\begin{align*}
  \min_{Z,\widetilde X, X_1,\ldots,X_n}~ &\ell(g(Z,\widetilde X)) + \sum_{i=1}^n r_i(X_i) + r(\widetilde X) \\
  \text{s.t. }~ &\sum_{i=1}^n F_i X_i = Z,
\end{align*}
where $r_i$, $r$ are regularizers.
Note that the affine constraint $\sumin F_{i} X_i - I Z = 0$ is a special case of coupled constraints.
Papers \cite{vepakomma2018split,xie2024improving} considered similar setup, but used nonlinear mappings for intermediate representations.

A fully decentralized implementation can avoid a dedicated ``top'' node by replicating the top-model parameters and splitting the sample dimension for computational parallelism.
Partition each party feature matrix into vertical blocks $F_i = \col\cbraces{F_{i,1},\ldots,F_{i,n}}$, which induces the corresponding split $H_i = \col\cbraces{F_{i,1}X_i,\ldots,F_{i,p}X_i}$.
Introduce local aggregated representations $Z_i$ and local copies of the top-model parameters $\widetilde X_i$.
Then VFL can be written as a problem with mixed coupled and consensus constraints that fits directly into \eqref{prob:opt_mixed_constraints_intro}:
\begin{align*}
  \min_{\substack{Z_1,\ldots,Z_n\\\widetilde X_1,\ldots,\widetilde X_n\\X_1,\ldots,X_n}}~ &\sum_{i=1}^n \ell_i\left(g(Z_i,\widetilde X_i)\right) + \sum_{i=1}^n r_i(X_i) + \frac1n\sum_{i=1}^n r(\widetilde X_i) \\
  \text{s.t. }~ &\sum_{i=1}^n F_{i,j}X_i = Z_j,\quad j=1,\ldots,n, \\
  &\widetilde X_1 = \ldots = \widetilde X_n,
\end{align*}
where $\ell_i$ aggregates the losses over the $i$-th sample block and the consensus constraint enforces a shared top model.

Other problem formulations that reduce to \eqref{prob:opt_mixed_constraints_intro} include a mixture of global and local models \cite{zhang2015deep,hanzely2020federated,hanzely2020lower}, distributed multi-task learning \cite{wang2016distributed} and federated self-supervised learning \cite{makhija2022federated}. We discuss these problems in Appendix \ref{app:problem_formulations}.

\paragraph{Related Work.}

Most popular special cases of problem \eqref{prob:opt_mixed_constraints_intro} are consensus constraints and coupled constraints. Both of these scenarios have been largely studied in the literature.

\textbf{Consensus constraints} have form $x_1 = \ldots = x_n$ and are a special case of \eqref{prob:opt_mixed_constraints_intro} when only the shared variable $\tilde x$ is present. Originating from the works \cite{nedic2009distributed,boyd2011distributed}, the work on decentralized consensus optimization came to building lower complexity bounds and optimal first-order algorithms \cite{scaman2017optimal,kovalev2020optimal} and generalization to time-varying graphs \cite{nedic2017achieving,kovalev2021adom,kovalev2021lower,li2021accelerated}. Generalizations on randomly varying networks were also done in \cite{koloskova2020unified,koloskova2021improved}. Algorithms for nonsmooth problems were proposed in \cite{scaman2018optimal,dvinskikh2019decentralized,gorbunov2019optimal,kovalev2024lower}.

Consensus optimization arises in large scale model training, i.e. in federated learning \cite{kairouz2019advances,mcmahan2017communication,lian2017can}.

\textbf{Coupled constraints} problem statements initially arised in control of distributed power systems. Different variants of method with corresponding engineering problem statements were studied in \cite{necoara2011parallel,necoara2014distributed, necoara2015linear}. A special case of coupled constraints is resource allocation problem \cite{doan2017distributed,li2018accelerated,nedic2018improved}. Aside from coupled equality constraints, nonlinear inequality constraints \cite{liang2019distributed,gong2023decentralized,wu2022distributed}, local constraints \cite{nedic2010constrained,zhu2011distributed}, restricted domains of local functions \cite{wang2022distributed,liang2019distributed,nedic2018improved,gong2023decentralized,zhang2021distributed, wu2022distributed} were studied. Generalizations on time-varying networks were presented in \cite{zhang2021distributed, nedic2018improved}.

The two main approaches to coupled constraints optimization include proximal ADMM-type algorithms and gradient methods. Proximal methods were studied in \cite{boyd2011distributed,chang2016proximal,falsone2020tracking,wu2022distributed}, and \cite{gong2023decentralized} studied an algorithm with inexact prox. Computing the proximal mapping is computationally tractable when the objective structure is prox-friendly. For this reason, proximal methods are mostly applied in power systems control, where the loss functions are simple enough. The situation is different in machine learning, where first-order methods are needed \cite{doan2017distributed,nedic2018improved,yarmoshik2024decentralized}. Lower complexity bounds and first-order optimal methods are presented in \cite{yarmoshik2024decentralized}.

The value of coupled constraints for machine learning research is mostly its application to vertical federated learning \cite{chen2020vafl,liu2024vertical,stanko2026accelerated}.

\textbf{Mixed constraints}. To the best of our knowledge, mixed constraints are little studied in the literature. The seminal work \cite{boyd2011distributed} proposes general form consensus, when each of the local variables $x_i$ is a subvector of the global variable $x$. However, if local functions do not depend on $x$, such constraint may be written in a coupled (not mixed) constraint form. Papers \cite{du2023linear,du2025distributed} proposes aggregative optimization, where each local function depends on the global decision vector $x$ along with global vector part $x_i$, and the optimization is performed w.r.t. coupled inequality constraints. They use a first-order method and achieve a linear convergence rate. Their problem can be written in form \eqref{prob:opt_mixed_constraints_intro}, but with local constraints that tie local individual variables to shared variables. In our paper, we study a slightly different setting.

\paragraph{Our Contributions.}

\begin{itemize}[leftmargin=*, noitemsep, topsep=0pt]
  \item A new class of decentralized problems with mixed affine constraints that generalizes most popular consensus-constrained and coupled-constrained setups, allowing for their efficient combination.
  \item Tight complexity analysis of first-order algorithms (with new corresponding lower bounds) in the smooth and strongly convex case, and supposedly near-optimal upper bounds for nonsmooth and non-strongly convex setups.
  \item A quantification of how joint structure of distributed affine constraints and communication network affects optimization performance.
\end{itemize}

\paragraph{Notation.}

For vectors $x_1, \ldots, x_n$ we denote a column-stacked vector $\bx = \col\cbraces{x_1, \ldots, x_n}$. We denote by $\norm{\cdot}$ the Euclidean norm, and by $\angles{\cdot, \cdot}$ the standard inner product. Sign $\otimes$ denotes the Kronecker product. We use bold letters for matrices stacked from different agents. These are block matrices and matrices obtained by Kronecker product, e.g. $\bA = \diag(A_1, \ldots, A_n)$, $\bA' = (A_1 \ldots A_n)$. Maximal and minimal positive singular values of matrix $A$ are denoted $\smax(A)$ and $\sminp(A)$, respectively. Maximal and minimal positive eigenvalues of a symmetric matrix $A$ are denoted $\lmax(A)$ and $\lminp(A)$, respectively.
For any matrix $A$, we introduce
$\label{eq:kappa_A}\kappa_A = \frac{\smax^2(A)}{\sminp^2(A)}.$ When referring to a group of matrices $A$, we interpret this parameter as defined for the block-diagonal matrix, i.e. $\kappa_A = \frac{\max_i{\smax^2(A_i)}}{\min_i{\sminp^2(A_i)}}$. For a given linear subspace $\cL$ we also introduce a corresponding projection operator $\bP_\cL$. 

\begin{table}[t]
	\vskip 0.15in
	\begin{center}
		\begin{small}
			\begin{sc}
				\begin{tabular}{ccc}
					\toprule
					Prob. & Oracle & Compl. \\
					\midrule
					\multirow{5}{*}{\vspace{0.5cm}\shortstack{Consensus \\ constr. \\ \eqref{prob:consensus_cons}}} 
					& Grad. & $\sqrt{\kappa_f}$ \\
					& Comm. & $\sqrt{\kappa_f}  \sqrt{\kappa_{W}}$ \\
					& Paper & \cite{scaman2017optimal} \\
					\midrule
					\multirow{5}{*}{\shortstack{Coupled \\ constr. \\ \eqref{eq:coupled_cons}}} 
					& Grad. & $\sqrt{\kappa_f}$ \\
					& Mat. $A$& $\sqrt{\kappa_f}\sqrt{\hat\kappa_{A}}$ \\
					& Comm. & $\sqrt{\kappa_f} \sqrt{\hat\kappa_{A}} \sqrt{\kappa_{W}}$ \\
					& Paper & \cite{yarmoshik2024decentralized} \\
					\midrule
					\multirow{4}{*}{\shortstack{Shared \\ var.\\constr. \\ \eqref{prob:shared_var_constraints}}}
					& Grad. & $\sqrt{\kappa_f}$ \\
					& Mat. $\widetilde C$& $\sqrt{\kappa_f}\sqrt{\hat\kappa_{\widetilde C^\top}}$ \\
					& Comm. & $\sqrt{\kappa_f}\sqrt{\hat\kappa_{\widetilde C^\top}} \sqrt{\kappa_W}$ \\
					& Paper & This paper, Th. \ref{thm:LN_upper_bound} \\
                    \midrule
					\multirow{5}{*}{\shortstack{Local \\ var. \\constr. \\ \eqref{prob:coupled_with_local}}}
					& Grad. & $\sqrt{\kappa_f}$ \\
					& Mat. $A$ & $\sqrt{\kappa_f} \sqrt{\kappaproj}$ \\
					& Mat. $C$ & $\sqrt{\kappa_f} \sqrt{\kappaproj}\sqrt{\kappa_{C}}$ \\
					& Comm. & $\sqrt{\kappa_f} \sqrt{\kappaproj} \sqrt{\kappa_W}$\\
					& Paper & This paper, Th. \ref{thm:mixed_upper_bound} \\
                    \midrule
                    \multirow{6}{*}{\shortstack{Mixed \\ constr. \\ \eqref{prob:opt_mixed_constraints_intro}}}
					& Grad. & $\sqrt{\kappa_f}$ \\
					& Mat. $A$ & $\sqrt{\kappa_f} \sqrt{\kappaproj} $ \\
					& Mat. $C$ & $\sqrt{\kappa_f} \sqrt{\kappaproj}\sqrt{\kappa_{C}}$ \\
                    & Mat. $\widetilde C$ & $\sqrt{\kappa_f}\sqrt{\hat\kappa_{\widetilde C^\top}}$ \\
					& Comm. & $\sqrt{\kappa_f}\left(\sqrt{\kappaproj} + \sqrt{\hkappa_{\widetilde C^\top}} \right)\sqrt{\kappa_W}$ \\
					& Paper & This paper, Th. \ref{thm:upper_bound_coupled_plus_non_ident_local_constraints_prob} \\
					\bottomrule
				\end{tabular}
			\end{sc}
		\end{small}
	\end{center}
	\caption{Convergence rates for decentralized smooth strongly convex optimization with affine constraints. Mixed condition numbers are denoted $\hat\kappa_A,~ \hat\kappa_{C^\top}$ (see Definition \ref{def:mixed_condition_number}). Condition number $\kappaproj$ is defined in Definition \ref{def:projected_condition_number}.
    The term $\log\left(\frac{1}{\varepsilon}\right)$ is omitted.}
    \vskip -0.3in    \label{table:smooth_str_convex_optimization_affine_constraints}
\end{table}

\paragraph{Paper Organization.} We begin with overview of optimization methods for convex optimization with affine constraints in Section~\ref{sec:affine_constraints_methods}. The algorithms described in Section~\ref{sec:affine_constraints_methods} are then applied to specific reformulations of corresponding special cases of problem \eqref{prob:opt_mixed_constraints_intro} in the following sections.
Section~\ref{sec:affine_constraints_statements} gives an overview of existing results in the field, and Section~\ref{sec:optimal_alg_Smooth_Str_Convex} contains complexity results (optimal algorithms with corresponding lower bounds) for smooth strongly convex setup which are summarized in \Cref{table:smooth_str_convex_optimization_affine_constraints}.
Analogously, the results for nonsmooth and non-strongly convex settings are described in Section~\ref{sec:nonsmooth_affine_constraints_statements}. The concluding remarks are given in Section~\ref{sec:conclusion}.

\section{Optimization with Affine Constraints} \label{sec:affine_constraints_methods}

All problems considered in this work can be formulated as convex optimization problems with affine equality constraints:
\begin{align}\label{prob:affine_constraints}
	\min_{u \in \cU} \; G(u) 
	\quad \text{s.t.} \quad 
	Bu = b ,
\end{align}
where $G \colon \cU \to \mathbb{R}$ is a convex function defined on a set $\cU \subseteq \mathbb{R}^d$, the matrix $B \in \mathbb{R}^{s \times d}$ is nonzero, and $b$ belongs to the column space of $B$. Depending on the structural properties of the objective function $G$, we employ different optimization techniques to solve problem~\eqref{prob:affine_constraints}.

\subsection{Assumptions on the Objective Function}

We introduce a set of assumptions that enable the derivation of convergence guarantees and complexity bounds for solving problem~\eqref{prob:affine_constraints} over several classes of convex functions.

\begin{assumption}[Strong convexity]\label{assum:strongly_convex}
	The function $G(u)$ is $\mu$-strongly convex on $\cU$ for $\mu \geq 0$, i.e. for any $u, u^\prime \in \cU$,
	\begin{align*}
		G(u^\prime)\geq G(u) + \angles{\nabla G(u), u^\prime - u} + \frac\mu2\normsq{u^\prime - u}.
	\end{align*}
	When $\mu = 0$, the function $G$ is said to be (non-strongly) convex.
\end{assumption}

\begin{assumption}[Smoothness]\label{assum:smooth}
	The function $G(u)$ is $L$-smooth on $\cU$, i.e. it is differentiable on $\cU$ and for any $u, u^\prime\in \cU$ we have
	\begin{align*}
		G(u^\prime) - G(u) - \angles{\nabla G(u), u^\prime - u}\leq \frac L2\normsq{u^\prime - u}.
	\end{align*}
\end{assumption}

\begin{assumption}[Bounded subgradients]\label{assum:bounded_gradient}
	The function $G$ has bounded subgradients on $\cU$, i.e. there exists a constant $M > 0$ such that for any $u \in \cU$ and any $G^\prime(u) \in \partial G(u)$,
	\begin{align*}
		\norm{G^\prime(u)}\leq M.
	\end{align*}
\end{assumption}

\subsection{Smooth Objectives}\label{sec:smooth_objectives_apapc}

Problems of type~\eqref{prob:affine_constraints} with smooth objectives (i.e. satisfying Assumption \ref{assum:smooth}) can be solved using the Accelerated Proximal Alternating Predictor-Corrector  algorithm (\textsc{APAPC}), proposed in \cite{salim2022optimal} for strongly convex problems. This is an optimal method for convex optimization with affine constraints when $\cU = \R^d$.
To apply the method for non-strongly convex objectives, we introduce a reduction technique via regularization which analysis is non-standard due to the presence of affine constraints. The theoretical complexity results for \textsc{APAPC} are summarized in \Cref{thm:smooth_objectives_apapc_convergence}.
\begin{theorem}\label{thm:smooth_objectives_apapc_convergence}
    Let $G(u)$ satisfy Assumptions \ref{assum:strongly_convex} and \ref{assum:smooth} with $0 < \mu < L$. Let $\norm{u^0 - u^*}^2\leq R^2$ and introduce accuracy $\eps > 0$.
    
    1. \textbf{Strongly convex case $\mu > 0$} \citep[Proposition 1]{salim2022optimal}.
    There exists a set of parameters for \Cref{alg:apapc} such that after $N = O\left(\kappa_B\sqrt{L/\mu} \log(1/\eps)\right)$ iterations the algorithm yields $u^N$ satisfying $\sqn{u^N - u^*}\leq \eps$.
    
    2. \textbf{Convex case $\mu = 0$} (\textbf{new,} Appendix~\ref{app:smooth_objectives_apapc}). Applying Algorithm \ref{alg:apapc} to regularized function $G^{\nu}(u) = G(u) + \nu/2 \|u^0 - u\|^2$ with $\nu = \eps/R^2$, after $N = O\left(\kappa_B\sqrt{LR^2/\varepsilon} \log \left(1 /  \eps\right)\right)$ iterations we obtain $u^N$, for which $G(u^N) - G(u^*) \leq \varepsilon$ and $\|B u^N - b\|^2 \leq O(\smax^2(B)\eps^2)$.
\end{theorem}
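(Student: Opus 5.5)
Part 1 is quoted from \citep[Proposition 1]{salim2022optimal}, so only part 2 needs a proof. The plan is the standard regularization reduction, with extra care because the regularized problem has its own constrained minimizer.

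Define $G^\nu(u) = G(u) + \frac{\nu}{2}\|u - u^0\|^2$. It is $\nu$-strongly convex and $(L+\nu)$-smooth. Let $u^*_\nu$ be its minimizer over $\{Bu=b\}$ and $u^*$ a minimizer of the original problem with $\|u^0-u^*\|\le R$.

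\textbf{Step 1: bias of the regularized minimizer.} Optimality of $u^*_\nu$ on the affine set gives
\[
G(u^*_\nu) + \tfrac{\nu}{2}\|u^*_\nu-u^0\|^2 \le G(u^*) + \tfrac{\nu}{2}R^2 .
\]
Since $G(u^*)\le G(u^*_\nu)$, this yields two facts: $G(u^*_\nu)-G(u^*)\le \nu R^2/2 = \eps/2$, and $\|u^*_\nu-u^0\|\le R$. The second fact is what bounds the starting distance for the inner method.

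\textbf{Step 2: run APAPC on $G^\nu$.} Part 1 applies with condition number $(L+\nu)/\nu = O(LR^2/\eps)$ (assuming $\eps \le LR^2$). After $N = O(\kappa_B\sqrt{LR^2/\eps}\log(1/\delta))$ iterations we get $\|u^N-u^*_\nu\|^2\le\delta$.

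\textbf{Step 3: convert distance to function gap and feasibility.} This is the main obstacle. APAPC iterates are not exactly feasible, and $\nabla G(u^*_\nu)$ need not vanish, so smoothness alone gives only a first-order bound. The KKT condition on the affine set says $\nabla G^\nu(u^*_\nu)=B^\top\lambda^*_\nu$, i.e. $\nabla G(u^*_\nu) = B^\top\lambda^*_\nu - \nu(u^*_\nu-u^0)$. Combining with $L$-smoothness,
\[
G(u^N)-G(u^*_\nu) \le \langle B^\top\lambda^*_\nu - \nu(u^*_\nu - u^0),\,u^N-u^*_\nu\rangle + \tfrac{L}{2}\|u^N-u^*_\nu\|^2 .
\]
Each term is controlled as follows:
\begin{itemize}
\item $\langle B^\top\lambda^*_\nu, u^N-u^*_\nu\rangle=\langle\lambda^*_\nu, Bu^N-b\rangle \le \|\lambda^*_\nu\|\,\smax(B)\sqrt\delta$.
\item The $\nu$-term is at most $\nu R\sqrt\delta$ by Step 1.
\item The last term is $L\delta/2$.
\end{itemize}
Feasibility follows directly: $\|Bu^N-b\|^2=\|B(u^N-u^*_\nu)\|^2\le\smax^2(B)\delta$.

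\textbf{Step 4: choose $\delta$.} Bound the dual norm by taking the least-norm multiplier: $\|\lambda^*_\nu\|\le \|\nabla G^\nu(u^*_\nu)\|/\sminp(B)$. The gradient is at most $\|\nabla G(u^*)\| + LR + \nu R$, using $\|u^*_\nu-u^*\|\le 2R$. All these quantities are polynomial in $L,R,\eps^{-1},\smax(B)/\sminp(B)$ and $\|\nabla G(u^*)\|$. Choosing $\delta=\min\{\eps^2,\ \text{poly}(\eps)\}$ small enough makes the total gap at most $\eps$ and gives $\|Bu^N-b\|^2\le\smax^2(B)\eps^2$. Because $\log(1/\delta)=O(\log(1/\eps))$, the $\log$ factor is unchanged (absorbing problem constants into the $O$).

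The delicate point is that the regularized dual solution can grow as $\nu\to0$. Writing the gradient bound via $\nabla G(u^*)$ rather than via $\nu$ keeps this growth polynomial, so only the logarithmic factor is affected.
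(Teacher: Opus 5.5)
Your argument is correct and has the same skeleton as the paper's proof (Lemma~\ref{lem:regularized_convergence}): regularize, solve to argument accuracy $\delta=\mathrm{poly}(\eps)$, then convert via smoothness, Cauchy--Schwarz and the bias bound $\nu R^2/2$.

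The one genuinely different step is how you control the first-order term at $u^*_\nu$. The paper uses $\|\nabla G^\nu(u^*_\nu)\|^2\le 2(L+\nu)\bigl(G^\nu(u^*_\nu)-\min_u G^\nu(u)\bigr)\le 2(L+\nu)(D+\eps/2)$ with $D=G(u^*)-\min_u G(u)$. This avoids multipliers and gives a closed-form $\delta$. However, it requires $\inf_{\R^d} G>-\infty$, which is not among the theorem's hypotheses; for example, $G(u)=u_1$ with constraint $u_1=0$ has $D=\infty$. Your route, Lipschitz continuity of $\nabla G$ from $u^*$ together with $\|u^*_\nu-u^*\|\le 2R$, needs no such assumption, so your version is slightly more general. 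Your Step~1 bound $\|u^0-u^*_\nu\|\le R$ also supplies the warm-start distance needed to invoke Part~1 from $u^0$, which the paper leaves implicit.

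The detour through the least-norm multiplier is unnecessary. Applying Cauchy--Schwarz directly to $\langle\nabla G^\nu(u^*_\nu),u^N-u^*_\nu\rangle$ gives $\|\nabla G^\nu(u^*_\nu)\|\sqrt{\delta}$ and removes the factor $\smax(B)/\sminp(B)$, though that factor is harmless since it only enters $\log(1/\delta)$.

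Two minor slips. The gradient bound should read $\|\nabla G(u^*)\|+2LR+\nu R$. Your closing worry about the dual solution growing as $\nu\to0$ is moot, since your own bound on $\|\lambda^*_\nu\|$ is uniform in $\nu$.
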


	\begin{algorithm}[h]
		\caption{\textsc{APAPC}}
		\label{alg:apapc}
		\begin{algorithmic}[1]
            \STATE {\bf Input:} $u^0 \in \R^d$
			\STATE {\bf Parameters:}  %
			$\eta,\theta,\alpha>0$, $\tau \in (0,1)$, $N \in \N$
			\STATE Set $u_f^0 = u^0$, $z^0 = 0 \in \R^d$
			\FOR{$k=0,1,2,\ldots, N-1$}{}
			\STATE $u_g^k \eqdef \tau u^k +  (1-\tau)u_f^k$\label{alg:apapc:line:x:1}
			\STATE $u^{k+\frac{1}{2}} \eqdef (1+\eta\alpha)^{-1}(u^k - \eta (\nabla G(u_g^k) - \alpha u_g^k + z^k))$\label{alg:apapc:line:x:2}
			\STATE $z^{k+1} \eqdef z^k  + \theta B^\top (B u^{k+\frac{1}{2}} - b)$ \label{alg:apapc:line:z}
			\STATE $u^{k+1} \eqdef (1+\eta\alpha)^{-1}(u^k - \eta (\nabla G(u_g^k) - \alpha  u_g^k + z^{k+1}))$\label{alg:apapc:line:x:3}
			\STATE $u_f^{k+1} \eqdef u_g^k + \tfrac{2\tau}{2-\tau}(u^{k+1} - u^k)$\label{alg:apapc:line:x:4}
			\ENDFOR
            \STATE {\bf Output:} $u^N$
		\end{algorithmic}
	\end{algorithm}

\subsection{Non-Smooth Objectives} \label{subsec:sliding}
In this section, we present an approach for solving Problem~\eqref{prob:affine_constraints} when the objective function $G$ is non-smooth. Following~\cite{dvinskikh2021decentralized, gorbunov2019optimal}, we handle the affine equality constraints by introducing the penalized objective function:
\begin{equation} \label{eq:affine_constraints_penal_prob}
	\min_{u \in \cU} \; H_{r}(u) := G(u) + \frac{r}{2} \normsq{B u - b},
\end{equation}
where $r>0$ is the penalty coefficient.

With a appropriate choice of $r$, solving the original problem reduces to minimizing $H_r$ (see \Cref{app:gradient_sliding_discuss} for details). Since $H_r$ consists of a non-smooth term $G(u)$ and a smooth quadratic penalty, we can apply the sliding technique to obtain separate complexity bounds for each component. Specifically, we employ the \textsc{Gradient Sliding} algorithm from \cite{Lan2019lectures}. To describe the algorithm, we define the following quadratic model of $H_r$:
\begin{align} \label{eq:sliding_subproblem}
    \Phi_{H_r}(w, u_1, u_2, &u_3, \beta, \eta) = \angles{G^\prime(u_1)  + r B^\top(Bu_2 -b), w} \notag \\
    &+ \frac{\beta}{2} \normsq{u_3 - w} + \frac{\beta \eta}{2} \normsq{u_1 - w},
\end{align}
where $\beta, \eta$ are parameters and $w, u_1, u_2, u_3 \in U$. We then derive the \Cref{alg:sliding} for solving Problem~\eqref{prob:affine_constraints} in non-smooth setting. At each iteration, the algorithm finds a solution of the subproblem of type \eqref{eq:sliding_subproblem}.

\begin{algorithm}[H]
    \caption{\textsc{Gradient Sliding}}
    \label{alg:sliding}
    \begin{algorithmic}[1]
        \STATE {\bf Input:} $u^0 \in U$
        \STATE {\bf Parameters:}  %
          $\{\gamma_k\} _{k=1}^\infty, \{\eta_k\}_{t=1}^\infty, \{\theta_t\}_{t=1}^\infty \subseteq \R_{++}$, 
          $N \in \N$, $\{T_k\}_{k=1}^N \subseteq \N^N$ 
        \STATE $\overline{u}^0 \eqdef u^0$
        \FOR{$k=1,2,\ldots, N$}{}
        \STATE $\underline{u}^k \eqdef \gamma_k u^{k-1} +  (1-\gamma_k)\overline{u}^{k-1}$\label{alg:sliding:line:x:1}
        \STATE $u^k_0 \eqdef \tilde{u}^k_0$, $\tilde{u}^k_0 \eqdef u^{k-1}$\label{alg:sliding:line:x:2}
        \FOR{$t=1,2,\ldots, T_k$}{}
        \STATE $u^k_t = \argmin\limits_{w \in \cU}\; \Phi_{H_r}(w, u^k_{t-1}, \underline{u}^{k}, u^{k-1}, \beta_k, \eta_t)$  \label{alg:sliding:line:x:3} 
        \STATE $\tilde{u}^k_t = \theta_t u^k_t + (1 - \theta_t) \tilde{u}^k_{t-1}$ \label{alg:sliding:line:x:4}
        \ENDFOR
        \STATE$u^k \eqdef u^k_{T_k}$, $\tilde{u}^k \eqdef \tilde{u}^k_{T_k}$ \label{alg:sliding:line:x:5}
        \STATE $\overline{u}^{k} \eqdef \gamma_k \tilde{u}^k +  (1-\gamma_k)\tilde{u}^{k-1}$\label{alg:sliding:line:x:6}
        \ENDFOR
        \STATE {\bf Output:} $\overline{u}^N$
    \end{algorithmic}
\end{algorithm}

\Cref{thm:upper_bound_nonsmooth_affine_constraints} provides upper bounds on the number of gradient computations and matrix multiplications required for \Cref{alg:sliding} to reach $(\varepsilon, \delta)$ accuracy, which means $G(u) - G(u^*) \leq \varepsilon$ and $\norm{Bu -b} \leq \delta$. For details refer to \Cref{app:gradient_sliding_discuss}.

\begin{theorem}[{\citet{lan2020communication}}]\label{thm:upper_bound_nonsmooth_affine_constraints}
	Let $G$ satisfy Assumption~\ref{assum:strongly_convex} with $\mu \geq 0$, Assumption~\ref{assum:bounded_gradient} with $M>0$, and suppose $\norm{u^0 - u^*}^2\leq R^2$. Given $\varepsilon>0$, consider solving the penalized problem~\eqref{eq:affine_constraints_penal_prob} using \Cref{alg:sliding}.\\
        1. \textbf{Convex case ($\mu = 0$)}. The algorithm requires $N_B = O\cbr{\frac{MR}{\eps} \sqrt{\kappa_B}}$ multiplications by $B, B^\top$ and $N_{G^\prime} = O\cbr{\frac{M^2R^2}{\eps^2} + N_B}$ evaluations of subgradient of $G$ to generate an output $\overline{u}^N$, for which $G(\overline{u}^N) - G(u^*) \leq \varepsilon$, and $\norm{B \overline{u}^N -b} \leq O(\varepsilon)$. \\
        2. \textbf{Strongly convex case ($\mu > 0$)}. With restarting, the algorithm returns $\overline u^N$ satisfying the same accuracy guarantees using $N_B = O\cbr{\frac{M}{\sqrt{\mu\eps}} \sqrt{\kappa_B}}$ multiplications by $B, B^\top$ and $N_{G^\prime} = O\cbr{\frac{M^2}{\mu\eps} + N_B}$ evaluations of subgradient of $G$.
\end{theorem}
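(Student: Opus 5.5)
The plan is to treat Algorithm~\ref{alg:sliding} as Lan's gradient sliding applied to the composite objective $H_r(u) = G(u) + \psi_r(u)$ with $\psi_r(u) = \frac r2\normsq{Bu-b}$. Here $\psi_r$ is convex and $L_\psi$-smooth with $L_\psi = r\smax^2(B)$. The proof then has two parts. First, we bound the cost of optimizing $H_r$ to accuracy $\eps/2$. Second, we choose $r$ so that an approximate minimizer of $H_r$ is $(\eps,O(\eps))$-accurate for the constrained problem~\eqref{prob:affine_constraints}.

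For the first part we invoke the sliding guarantee of \citet{Lan2019lectures}. In the convex case, with $\gamma_k = 2/(k+1)$, $\beta_k \asymp L_\psi/k$ and $T_k \asymp M^2 N k^2/(L_\psi^2 R^2)$, after $N$ outer iterations
\[
H_r(\overline u^N) - H_r(u_r^*) \lesssim \frac{L_\psi R^2}{N^2} + \frac{MR}{N'},
\]
where $N' = \sum_k T_k$ is the total number of subgradient calls. Each outer iteration costs one gradient of $\psi_r$, which is one multiplication by $B$ and one by $B^\top$. Hence $N_B = O(\sqrt{L_\psi R^2/\eps})$ and $N_{G'} = O(M^2R^2/\eps^2 + N_B)$. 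In the strongly convex case we restart this scheme, halving the squared distance to the solution at each stage. The stage lengths form a geometric sum, which gives $N_B = O(\sqrt{L_\psi/\mu}\cdot(\text{stage factor}))$. This resolves to $O(\frac{M}{\sqrt{\mu\eps}}\sqrt{\kappa_B})$ once $r$ is substituted, and $N_{G'} = O(M^2/(\mu\eps) + N_B)$.

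The second part is the penalty analysis, and I expect it to be the main obstacle. Let $y^*$ be a minimal-norm dual solution, i.e. $B^\top y^* \in -\partial G(u^*)$ and $y^* \in \operatorname{range}(B)$. Then $\norm{B^\top y^*}\le M$, and therefore $\norm{y^*}\le M/\sminp(B)$. By Lagrangian duality, for any $u$,
\[
G(u) - G(u^*) \ge -\norm{y^*}\,\norm{Bu-b}.
\]
Combining this with $H_r(u)-G(u^*)\le H_r(u)-H_r(u_r^*)\le\eps$ gives
\[
\frac r2\normsq{Bu-b} - \norm{y^*}\norm{Bu-b} \le \eps .
\]
It follows that $\norm{Bu-b}\le 2\norm{y^*}/r + \sqrt{2\eps/r}$, while $G(u)-G(u^*)\le\eps$ holds directly. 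Choosing $r \asymp \norm{y^*}^2/\eps \asymp M^2/(\sminp^2(B)\eps)$ makes $\norm{Bu-b} = O(\eps/\norm{y^*})$, which is $O(\eps)$ after the normalization used in \Cref{app:gradient_sliding_discuss}. Then
\[
L_\psi R^2/\eps \asymp \frac{M^2R^2\kappa_B}{\eps^2},
\]
so $N_B = O(\frac{MR}{\eps}\sqrt{\kappa_B})$. The strongly convex case follows in the same way, with $R^2$ replaced by $M^2/(\mu\cdot\text{scale})$ through the restart analysis, giving $\frac{M}{\sqrt{\mu\eps}}\sqrt{\kappa_B}$.

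The delicate points are two. The first is getting the dual bound in terms of $\sminp(B)$, which is where the restriction $y^*\in\operatorname{range}(B)$ is essential. The second is checking that the restart schedule in the strongly convex case keeps the same $r$ throughout. If a single $r$ fails there, the fallback is to grow $r$ geometrically across stages. The cost is then dominated by the last stage, and the stated bound still holds.
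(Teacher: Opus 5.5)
Your proposal is correct and follows the paper's route. It uses gradient sliding with the quadratic penalty as the smooth component ($L = r\smax^2(B)$), takes $r \asymp R_{\operatorname{dual}}^2/\eps$ with $R_{\operatorname{dual}} = M/\sminp(B)$ obtained from a multiplier in $\image B$ (so $L \lesssim M^2\kappa_B/\eps$), and restarts for $\mu>0$ using only the strong convexity of $G$; the paper simply cites the penalty and dual-bound lemmas that you derive yourself.

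Two small fixes are needed. First, the displayed sliding rate should contain $MR/\sqrt{N'}$, since as written it contradicts your own $N_{G'}$. Second, with a fixed $r$ every restart stage costs $O(\sqrt{L/\mu})$ outer steps, so $N_B = O\big(\frac{M}{\sqrt{\mu\eps}}\sqrt{\kappa_B}\log\frac1\eps\big)$, which is exactly what the paper obtains from R-Sliding, with the logarithm suppressed in the statement. Your geometric-$r$ fallback would genuinely remove that logarithm, because $\norm{u^*_r-u^*}^2\le\norm{y^*}^2/(\mu r)$ controls the drift between stages.
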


\subsection{Chebyshev Acceleration} \label{subsec:chebyshev_acc}
Chebyshev acceleration \cite{salim2022optimal,scaman2017optimal,auzinger2011iterative}, initially applied as a theoretical tool to reduce communication complexity in smooth consensus optimization, effectively improves condition number of affine constraint matrix.
The idea of Chebyshev acceleration  is to equivalently reformulate an affine constraint $B u = b$ as $K x = b'$, where $K = P_B(B^\top B)$, $b' = \frac{P_B(B^\top B)}{B^\top B}B^\top b$, and $P_B$ is a polynomial such that  $P_B(\sigma_i^2(B)) = $ if and only if $\sigma_i(B) = 0$.
With appropriately scaled and shifted Chebyshev polynomials, the reformulation would have condition number $\kappa_{K} = O(1)$ and matrix-vector multiplications $K x$ could be implemented via $O(\sqrt{\kappa_B})$ matrix-vector multiplications by $B^\top u $ and $B u$, so there is no need to explicitly compute and store matrix polynomial $K$.
We will denote application of Chebyshev acceleration as $B \to P_B(B^\top B)$.
The pseudocode for the Chebyshev iteration is provided in Appendix~\ref{app:Chebyshev_iteration}.

\section{Existing Results in Decentralized Optimization with Affine Constraints} \label{sec:affine_constraints_statements}

This section discusses about known results in decentralized optimization and their convergence properties. We review related formulations of the decentralized optimization problem from prior work and their convergence properties. These formulations are special cases of the problem~\eqref{prob:opt_mixed_constraints_intro}.

\subsection{Notations and Assumptions}

Decentralized communication is typically represented as a matrix-vector multiplication. In particular, we assume the existence of a \emph{gossip matrix} $W \in \R^{n\times n}$, associated with the communication network $\cG = (\cV, \cE)$, with the following properties.
\begin{assumption}\label{ass:W}
    The \emph{gossip matrix} $W$ satisfies:\\
    \hspace*{0.5em} 1. $W$ is symmetric and positive semidefinite. \\
    \hspace*{0.5em} 2. $W_{ij}\neq 0$ if and only if $i = j$ or $(i,j) \in \cE$. \\
    \hspace*{0.5em} 3. $Wx = 0$ if and only if $x_1 = \ldots = x_n$.
\end{assumption}
An example of such matrix is the Laplacian matrix  $L = D - A$, where $A$ is the adjacency matrix and $D$ is the degree matrix of the network $\cG$. Throughout this paper, we denote $\bW = W\otimes I$, where the dimension of identity matrix $I$ is determined by the context.

We consider the following standard assumptions on the objective functions in decentralized optimization.
\begin{assumption} \label{ass:smooth_and_str_convex_of_f_i}
    Each function $f_i$ is $L_f$-smooth and $\mu_f$-strongly convex for some $L_f\geq \mu_f \geq 0$.
\end{assumption}
When $\mu_f >0$, we denote the condition number of the local functions by $\kappa_f = L_f / \mu_f$.

\begin{assumption} \label{ass:nonsmooth_and_str_convex_of_f_i}
    Each function $f_i$ has bounded subgradients with constant $M_f > 0$ and is $\mu_f$-strongly convex for some $\mu_f \geq 0$.
\end{assumption}

\subsection{Consensus Optimization}
This case corresponds to scenarios without affine equality constraints, involving only common variables. When Assumptions~\ref{ass:W} and~\ref{ass:smooth_and_str_convex_of_f_i} hold with $\mu_f > 0$, the optimal convergence rates are $O(\sqrt{\kappa_f} \log(1/\varepsilon))$ for gradient calls and $O(\sqrt{\kappa_f} \sqrt{ \kappa_W }  \log(1/\varepsilon))$ for communication rounds, as established in \cite{kovalev2020optimal}.

In the nonsmooth and non-strongly convex setting, that is, when Assumptions~\ref{ass:W} and~\ref{ass:nonsmooth_and_str_convex_of_f_i} hold with $\mu_f = 0$, the number of subgradient evaluations is upper bounded by $O(M_f^2R^2/\varepsilon)$, while the number of communication rounds is bounded by $O(\sqrt{\kappa_W} M_fR/\varepsilon)$, where $R$ denotes the radius of the feasible region. These bounds are achieved and shown to be optimal in \cite{scaman2018optimal}.

\subsection{Identical Local Constraints} \label{sub:ident_local}

Consider the case with only the common variable $\tilde{x}$ and identical local constraints, i.e., $\widetilde{C}_i = \widetilde{C}$ and $\tilde{c}_i = \tilde{c}$ for all $i = 1, \dots, n$. In \cite{rogozin2022decentralized_affine}, author obtained upper bounds for this formulation when Assumptions~\ref{ass:W} and~\ref{ass:smooth_and_str_convex_of_f_i} hold: $O(\sqrt{\kappa_f}\log\cbraces{1/\eps})$ gradient calls, $N_{\widetilde C} = O(\sqrt{\kappa_f} \sqrt{\kappa_{\widetilde C}} \log\cbraces{1/\eps})$ multiplications by $\widetilde C$ and $\widetilde{C}^{\top}$, and $\Nw = O\cbraces{\sqrt{\kappa_f} \sqrt{\kappa_W} \log\cbraces{1/\eps}}$ communications. As we will show in Section~\ref{sub:shared_variable_constraints}, the complexity bounds change substantially when local constraints are nonidentical.

\subsection{Coupled Constraints} 
\label{sub:coupled}

We now consider the case when each node locally holds a part of affine constraints $A_i$.
\begin{align} \label{eq:coupled_constraints_org}
	\min_{x_1\in\R^{d_1}, \ldots, x_n\in\R^{d_n}}~ \sumin f_i(x_i) \quad \text{s.t.} \quad \sumin \cbr{A_i x_i - b_i} = 0.
\end{align}

This problem was studied in \cite{yarmoshik2024decentralized} for smooth and strongly convex objective functions. Authors introduced a new kind of condition number that captured the convergence rates for coupled constraints.
\begin{definition}\label[definition]{def:mixed_condition_number}
    For a set of matrices $(B_1, \ldots, B_n)$, introduce an interaction matrix
    \begin{align}\label{eq:interaction_matrix}
       S_B = \frac{1}{n} \sum_{i=1}^n B_i B_i^\top,
    \end{align}
    and a mixed condition number
    \begin{align*}
        \hat\kappa_B = \frac{\underset{i=1,\ldots,n}{\max}\lmax(B_i B_i^\top)}{\lminp(S_B)} = \frac{\underset{i=1,\ldots,n}{\max}\lmax(B_i B_i^\top)}{\lminp\cbraces{\frac{1}{n}\sum_{i=1}^n B_i B_i^\top}}.
    \end{align*}
\end{definition}
\begin{remark}
    Note that the transposition order in definition of $\hat\kappa_B$ is important. Let
    \begin{align*}
        \hat\kappa_{B^\top} = \frac{\underset{i=1,\ldots,n}{\max}\lmax(B_i^\top B_i)}{\lminp(S_{B^\top})} = \frac{\underset{i=1,\ldots,n}{\max}\lmax(B_i^\top B_i)}{\lminp\cbraces{\frac{1}{n}\sum_{i=1}^n B_i^\top B_i}}.
    \end{align*}
    In general $\hat\kappa_{B^\top}\neq \hat\kappa_B$. For example, let $B_i = e_i$, where $e_i$ is the $i$-th coordinate vector. Then $\hat\kappa_B = n$ and $\hat\kappa_{B^\top} = 1$.
\end{remark}

\begin{remark}
    Note that in the case of equal matrices $B_1 = \ldots = B_n = B$ mixed condition number $\hkappa_B$ naturally reduces to usual condition number $\kappa_B$ so that $\hkappa_B = \hkappa_{B^\top} = \kappa_B$.
\end{remark}

When Assumptions~\ref{ass:W} and~\ref{ass:smooth_and_str_convex_of_f_i} hold with $\mu_f > 0$, the optimal convergence rates achieved in \cite{yarmoshik2024decentralized} are $O(\sqrt{\kappa_f} \log(1/\varepsilon))$ for gradient calls, $O(\sqrt{\kappa_f} \sqrt{ \hat{\kappa}_A }  \log(1/\varepsilon))$ matrix-vector multiplications $\bA$, $\bA^\top$ and $O(\sqrt{\kappa_f} \sqrt{ \hat{\kappa}_A } \sqrt{\kappa_W} \log(1/\varepsilon))$ for communication rounds.

\section{Optimal Algorithms for Smooth Strongly Convex Problems} \label{sec:optimal_alg_Smooth_Str_Convex}

Before deriving the algorithm and complexity bound for the problem~\eqref{prob:opt_mixed_constraints_intro}, we consider two special cases. The first is when only local constraints are imposed on the common variable, and the second is when both coupled and local constraints are present.

\subsection{Shared Variable Constraints}\label{sub:shared_variable_constraints}

We consider the case when each node is subject to its own local affine constraint on the global variable. In particular, we study the generation of formulation in \cite{rogozin2022decentralized_affine} from identical to non-identical constraints. The optimization problem can be written as
\begin{equation} \label{prob:shared_var_constraints}\tag{S}
   \min_{\tilde{x} \in \mathbb{R}^{\tilde{d}}} \sum_{i=1}^n f_i(\tilde{x})
   \quad \text{s.t.} \quad \widetilde{C}_i \tilde{x} = \tilde{c}_i \quad \forall i \in \{1,\ldots,n\}.
\end{equation}
Introducing local copies of the variable allows us to reformulate the problem in a block-matrix format
\begin{equation} \label{prob:local_non_ident_constraints_on_shared_var_prob} 
\hspace{-0.1cm}
   \min_{\tilde{\mathbf{x}} \in \left(\mathbb{R}^{\tilde{d}}\right)^n} F(\tilde{\mathbf{x}}) := \sum_{i=1}^n f_i(\tilde x_i)
   \;\textrm{ s.t. }\; \mathbf{W}\tilde{\mathbf{x}} = 0, \; \widetilde{\mathbf{C}}\mathbf{x} = \tilde \bc,
\end{equation}
where $\tilde{\mathbf{x}} = \col\{\tilde x_1, \ldots, \tilde x_n\}$, $\tilde{\mathbf{c}} = \col\{\tilde c_1, \ldots, \tilde c_n\}$, $\mathbf{W} = W \otimes I_d$ encodes consensus, and matrix $\widetilde{\mathbf{C}} = \diag(\widetilde C_1,\ldots, \widetilde C_n) \in \mathbb{R}^{\tilde p \times \tilde{d} n}$ collects the local constraints, where $\tilde{p} = \sum_{i=1}^n \tilde p_i$.

The constraint matrix has a block structure: $\widetilde{\mathbf{B}}^\top = (\widetilde{\mathbf{C}}^\top \;\;  \gamma \mathbf{W})$. Applying Lemma 2 from \cite{yarmoshik2024decentralized}, we select an appropriate scaling parameter $\gamma$ and derive the upper bounds for decentralized optimization with affine equality constraints on shared variable in \Cref{thm:LN_upper_bound}.

\begin{theorem}[\textbf{new,} Appendix \ref{app:nonind_local_lower}] \label{thm:LN_upper_bound}
Applying Algorithm~\ref{alg:apapc} to a reformulation of problem \eqref{prob:local_non_ident_constraints_on_shared_var_prob} with $\bW\to P_W(\bW)$ and then $\widetilde{\bB}\to P_C(\widetilde{\bB}^\top\widetilde{\bB})$, we obtain a method with complexity specified in \Cref{table:smooth_str_convex_optimization_affine_constraints}.
This bound is optimal in a naturally defined class of decentralized first-order algorithms for problems with local constraints.
\end{theorem}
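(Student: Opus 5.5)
The plan has two halves: an upper bound obtained by applying \Cref{thm:smooth_objectives_apapc_convergence} (strongly convex case) to a twice Chebyshev-preconditioned version of the block constraint, and a lower bound obtained by embedding a known hard instance.

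\textbf{Step 1: first Chebyshev acceleration.} Replace $\bW$ by $P_W(\bW)$. Its kernel is still the consensus subspace $\cL = \{\tilde\bx : \tilde x_1=\ldots=\tilde x_n\}$. Its positive spectrum lies in $[1,O(1)]$ after scaling, and each multiplication costs $O(\sqrt{\kappa_W})$ communication rounds. Call the resulting matrix $\bK$.

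\textbf{Step 2: condition number of the stacked matrix.} Bound the condition number of $\widetilde\bB^\top = (\widetilde\bC^\top\;\gamma \bK)$ using Lemma 2 of \cite{yarmoshik2024decentralized}. We have $\widetilde\bB^\top\widetilde\bB = \widetilde\bC^\top\widetilde\bC + \gamma^2\bK^2$. Take $\gamma^2 \asymp \max_i\lmax(\widetilde C_i^\top \widetilde C_i)$.
\begin{itemize}
\item Upper bound: $\lmax(\widetilde\bB^\top\widetilde\bB)\le O(\max_i \lmax(\widetilde C_i^\top\widetilde C_i))$.
\item Lower bound: for a vector orthogonal to $\ker\widetilde\bB = \cL\cap\ker\widetilde\bC$, decompose it into its consensus part $\mathbf 1\otimes y$ and its orthogonal part. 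The $\bK$ term controls the orthogonal part with weight $\gamma^2$. On the consensus part, $\|\widetilde\bC(\mathbf 1\otimes y)\|^2 = y^\top(\sum_i \widetilde C_i^\top \widetilde C_i)y \ge n\,\lminp(S_{\widetilde C^\top})\|y\|^2$ for $y\perp\ker$. A standard Lemma-2 type argument (splitting cross terms with Young's inequality) then gives $\lminp(\widetilde\bB^\top\widetilde\bB)\gtrsim \lminp(S_{\widetilde C^\top})$, up to the $1/n$ normalization of $\|\mathbf1\otimes y\|^2 = n\|y\|^2$.
\end{itemize}
This gives $\kappa_{\widetilde\bB} = O(\hkappa_{\widetilde C^\top})$, and Step 2 is the main technical point: the cross terms must be handled so that no factor of $n$ or of $\kappa_W$ leaks in.

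\textbf{Step 3: second Chebyshev acceleration and complexity count.} Apply $\widetilde\bB\to P_C(\widetilde\bB^\top\widetilde\bB)$. This yields an $O(1)$-conditioned constraint, and each multiplication by it costs $O(\sqrt{\hkappa_{\widetilde C^\top}})$ multiplications by $\widetilde\bB,\widetilde\bB^\top$. Each of those costs one multiplication by $\widetilde\bC$ and $O(\sqrt{\kappa_W})$ communications. \Cref{thm:smooth_objectives_apapc_convergence} then gives
\begin{itemize}
\item $O(\sqrt{\kappa_f}\log(1/\eps))$ gradient calls;
\item $O(\sqrt{\kappa_f}\sqrt{\hkappa_{\widetilde C^\top}}\log(1/\eps))$ multiplications by $\widetilde\bC$;
\item $O(\sqrt{\kappa_f}\sqrt{\hkappa_{\widetilde C^\top}}\sqrt{\kappa_W}\log(1/\eps))$ communications,
\end{itemize}
matching the table.

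\textbf{Lower bound.} Define the class of algorithms whose local memory grows only via gradient steps, multiplications by $\widetilde C_i,\widetilde C_i^\top$, and gossip with neighbours. Then embed a hard instance.
\begin{itemize}
\item Gradient and communication terms: reduce to the known consensus lower bound of \cite{scaman2017optimal}, combined with the coupled-constraint lower bound of \cite{yarmoshik2024decentralized}.
\item Constraint term: via duality, note that $\sum_i\widetilde C_i^\top\lambda_i$ plays the role of the coupled matrix with $A_i=\widetilde C_i^\top$.
\item Concretely: take the Nesterov-type chain function spread over nodes so that information must propagate both along the chain of $\widetilde C_i$'s (as in the $B_i=e_i$ example, where $\hkappa$ grows) and across a path graph. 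Each progress step then needs $\sqrt{\hkappa_{\widetilde C^\top}}$ matrix products and $\sqrt{\kappa_W}$ gossip rounds.
\end{itemize}
Making these two delays multiply rather than add is the delicate part. I would place constraint pieces at opposite ends of a path graph so that every constraint-driven step also forces a full traversal.
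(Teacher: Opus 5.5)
Your upper bound is the paper's argument. You apply Lemma 2 of \cite{yarmoshik2024decentralized} to $\widetilde\bB^\top=(\widetilde\bC^\top\;\;\gamma P_W(\bW))$ with $\gamma^2\asymp\max_i\lmax(\widetilde C_i^\top\widetilde C_i)$. Your split into a consensus part $\mathbf 1\otimes y$ with $y\perp\bigcap_i\ker\widetilde C_i$, plus a part orthogonal to the consensus subspace, with Young's inequality on the cross term, is exactly that lemma's proof. This gives $\kappa_{\widetilde\bB}=O(\hkappa_{\widetilde C^\top})$, and the nested Chebyshev cost count is the same as the paper's. That half is fine.

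The optimality half has a genuine gap: you give no hard instance, and the mechanism you outline does not deliver the bound.
\begin{itemize}
\item The consensus lower bound of \cite{scaman2017optimal} only yields $\sqrt{\kappa_f\kappa_W}$ communications, without the $\sqrt{\hkappa_{\widetilde C^\top}}$ factor.
\item The coupled-constraint bound of \cite{yarmoshik2024decentralized} is for a different problem class and oracle: multiplications by $A_i,A_i^\top$ on local variables, not by $\widetilde C_i^\top\widetilde C_i$ on copies of a shared variable. It cannot be invoked without a reduction you do not supply, which is why the paper redoes the construction.
\item Your accounting (``each progress step needs $\sqrt{\hkappa_{\widetilde C^\top}}$ products and $\sqrt{\kappa_W}$ rounds'') gives only $\sqrt{\kappa_f\kappa_W}$ communications if there are $\sqrt{\kappa_f}\log\frac1\eps$ progress steps. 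Your fix, making every constraint step force a traversal, needs an instance in which $\hkappa_{\widetilde C^\top}$ (a spectral ratio of the averaged Gram matrix) equals the square of an inner sequential delay. You do not construct one.
\item The $B_i=e_i$ picture makes $\hkappa$ equal to the number of nodes holding constraint pieces. In a path-graph instance that number is already fixed by $\kappa_W$, whereas the two parameters must be set independently.
\end{itemize}

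The missing idea is to let $\hkappa_{\widetilde C^\top}$ enter through the conditioning of the \emph{dual}, as the paper does.
\begin{itemize}
\item Take identical quadratics $f_i(p,t)=\frac{\mu_f}{2}\sqn{p}+L_f\sqn{t+\frac{L'}{\mu_f}e_1}$ and the constraint $\tC=(-\sqrt{L'}\bE^\top\;\;\sqrt{\mu'}\bI)$. Here $\bE$ is a non-symmetric bidiagonal root of the tridiagonal $\bM=\bE^\top\bE$.
\item Then the dual $-\min_z F^*(\tC^\top z)$ is exactly Nesterov's worst function with condition number of order $\frac{L_\bC L_f}{\mu_\bC\mu_f}$. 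Its solution is $z^*=(\rho^k)$, and $t^*$ is a scaled copy of $z^*$ up to the first coordinate. This forces $\Omega(\sqrt{\hkappa_{\widetilde C^\top}\kappa_f}\log\frac1\eps)$ nonzero coordinates.
\item Assign the odd and even rows of $\tC$ to the end groups $\cV_1,\cV_3$ of a path graph, with the middle third zero. Choosing $L'=\frac12L_\bC-\frac32\mu_\bC$ and $\mu'=3\mu_\bC$ gives $\hkappa_{\widetilde C^\top}=L_\bC/\mu_\bC$ exactly.
\item Each new coordinate then costs only $O(1)$ matrix multiplications but a traversal of the path, i.e.\ $\Omega(n)=\Omega(\sqrt{\kappa_W})$ rounds.
\end{itemize}
The delays multiply because the \emph{number} of progress steps already carries $\sqrt{\hkappa_{\widetilde C^\top}\kappa_f}$, not because each step costs $\sqrt{\hkappa_{\widetilde C^\top}}$ products. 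The gradient lower bound then follows by adding independent Nesterov functions at each node.
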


A notable distinction from the coupled case is that regularization is not needed in order to guarantee strong convexity, and there is more freedom in the design of preconditioners. In particular, $\widetilde{\mathbf{C}}$ can be preconditioned independently of $\mathbf{W}$, whereas for coupled constraints $\mathbf{A}$ and $\mathbf{W}$ are inherently entangled. Indeed, $(\widetilde{\mathbf{C}}^\top \; \mathbf{W})^\top x = 0$ if and only if $(P(\widetilde{\mathbf{C}}^\top\widetilde{\mathbf{C}})^\top \; \mathbf{W})^\top x = 0$ for any polynomial $P$ without a constant term, provided that $P(\lambda)\neq 0$ for all nonzero eigenvalues $\lambda$ of $\widetilde{\bC}^\top \widetilde{\mathbf{C}}$. This property enables richer preconditioning strategies. Nevertheless, despite this additional flexibility, the upper bounds obtained above turn out to be tight. Optimality can be proved in a manner analogous to the lower bound arguments for coupled constraints (see Appendix~\ref{app:nonind_local_lower}).

\subsection{Coupled and Local Constraints}\label{sub:mixed_coupled_and_local}
Individual variables are related by coupled constraints and locally tied by local constraints.
\begin{align}\label{prob:coupled_with_local}
	\min_{x_1\in\R^{d_1}, \ldots, x_n\in\R^{d_n}}~ &\sumin f_i(x_i) \tag{C\&L} \\ \text{s.t.}~ &\sumin(A_i x_i - b_i) = 0,~ C_i x_i = d_i. \nonumber
\end{align}
For this formulation the decentralized-friendly matrix of the affine constraints is $\bB = \pmat{\bA & \alpha \bW\\ \beta\bC & 0}$ with $\bA = \diag\cbr{A_1, \ldots, A_n}$ and $\bC = \diag\cbr{C_1, \ldots, C_n}$, see Appendix~\ref{app:coupled_cons}. 

Now we introduce natural complexity parameters for this setup. 

\begin{definition}\label[definition]{def:projected_condition_number}
    Consider two sets of matrices $(B_1, \ldots, B_n)$ and $(D_1, \ldots, D_n)$. Let $\bB' = (B_1, \ldots, B_n)$ and $\bD = \diag(D_1, \ldots, D_n)$. We define 
    \begin{subequations}\label{eq:proj_condition_number}
    \begin{align*}
        \widetilde\mu_{BD} &:= \frac1n\sminp^2\cbr{\bB' \bP_{\ker\bD}}, \\
        \widetilde\kappa_{BD} &:=
         \begin{cases}
        \displaystyle
        \frac{\max\limits_{i=1,\ldots,n} \lmax(B_i B_i^\top)}
        {\widetilde{\mu}_{BD}}, 
        & \text{if } \widetilde{\mu}_{BD} > 0, \\[1ex]
        1, 
        & \text{if } \widetilde{\mu}_{BD} = 0.
        \end{cases}
    \end{align*}
     The definition of $\widetilde \kappa_{BD} = 1$ for the case $\widetilde \mu_{BD} = 0$ is introduced for convenience, since in that case $\tkappa_{BD}$ does not affect the complexity (see \Cref{lem:mixed_coupled_with_local_spectrum} and \Cref{thm:mixed_upper_bound}).
    
    \end{subequations}
\end{definition}
\begin{remark}\label[remark]{rem:projected_condition_number}
    Note that if $\widetilde\mu_{BD} > 0$ then we have $\tkappa_{BD}\leq \hkappa_B$. Indeed,   for any matrix $M$ and linear subspace $\cL$ of compatible dimensions it holds $\ker^\bot M \bP_\cL = \ker^\bot M \cap \cL$, and thus  $\sminp(M\bP_\cL) = \min_{h \in \ker^\bot M, h \in \cL} \|Mh\| /\|h\| \geq \min_{h \in \ker^\bot M}\|Mh\| / \|h\| =  \sminp(M)$.
    The equality is reached if a singular vector corresponding to $\sminp(\bB')$ belongs to $\ker\bD$. In particular, we have $\tkappa_{BD} = \hkappa_B$ when $\bD = 0$.
\end{remark}

Following lemma gives upper bound for $\kappa_B$ which is essential for complexity upper bounds. It also characterizes how interconnection between distributed affine constraints and communication network affects optimization performance.
\begin{lemma}[\textbf{new,} Appendix \ref{app:mixed_coupled_with_local_spectrum}] \label[lemma]{lem:mixed_coupled_with_local_spectrum}
  There exist constants $\alpha>0$ and $\beta>0$ such that the condition number
$\kappa_\bB$ satisfies
  \begin{equation}
    \kappa_\bB = \begin{cases}
      O\cbr{\kappaproj\kappa_W+ \kappaproj \kappa_C}, &\muproj > 0, \\
      O\cbr{\kappa_W + \kappa_C}, & \muproj = 0.
    \end{cases}
  \end{equation}
\end{lemma}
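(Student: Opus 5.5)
The plan is to work with $\bB^\top$ and bound the extreme singular values of $\bB$ through the quadratic form
\begin{align*}
\norm{\bB^\top (y;z)}^2 = \norm{\bA^\top y + \beta \bC^\top z}^2 + \alpha^2 \norm{\bW y}^2 ,
\end{align*}
where $y=\col\{y_1,\ldots,y_n\}$ and $z=\col\{z_1,\ldots,z_n\}$. Then $\kappa_\bB = \smax^2(\bB)/\sminp^2(\bB)$.

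\textbf{Upper bound on $\smax^2(\bB)$.} By the triangle inequality and the block-diagonal structure,
\begin{align*}
\smax^2(\bB) \le 2\max_i \lmax(A_iA_i^\top) + 2\beta^2 \max_i\smax^2(C_i) + \alpha^2\lmax^2(W).
\end{align*}
I will fix the scalings so that each term is of the order of the denominator we can guarantee:
\begin{align*}
\alpha^2 \asymp \frac{\max_i\lmax(A_iA_i^\top)}{\lminp^2(W)}, \qquad \beta^2 \asymp \frac{\max_i\lmax(A_iA_i^\top)}{\min_i\sminp^2(C_i)}.
\end{align*}
With this choice $\smax^2(\bB) = O\cbr{\max_i\lmax(A_iA_i^\top)\,(1+\kappa_W+\kappa_C)}$.

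\textbf{Lower bound on $\sminp^2(\bB)$.} I restrict to $(y;z)\in\ker^\bot\bB^\top$ and split the two blocks.
\begin{itemize}
\item Split $y = \mathbf 1\otimes\bar y + y_\perp$, where $y_\perp$ is orthogonal to consensus. The communication term then gives $\alpha^2\norm{\bW y}^2\ge \alpha^2\lminp^2(W)\norm{y_\perp}^2$.
\item Split $z = z_{\ker} + z_r$, with $z_r \in \ker^\bot \bC^\top$.
\end{itemize}
For the first term I will use two complementary lower bounds.
\begin{itemize}
\item Projecting onto $\ker\bC$ kills $\bC^\top z$. Since $\bA^\top(\mathbf 1\otimes \bar y) = \bB'^{\top}\bar y$ for $\bB' = (A_1,\ldots,A_n)$,
\begin{align*}
\norm{\bA^\top y + \beta\bC^\top z} \ge \norm{\bP_{\ker\bC}\bB'^\top \bar y} - \norm{\bA^\top y_\perp} \ge \sqrt{n\muproj}\,\norm{\bar y}_{\perp} - \max_i\smax(A_i)\norm{y_\perp}.
\end{align*}
Here $\norm{\bar y}_\perp$ denotes the component of $\bar y$ outside $\ker(\bP_{\ker\bC}\bB'^\top)$. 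Note that $\norm{\mathbf 1\otimes\bar y}^2 = n\norm{\bar y}^2$, so the $1/n$ in $\muproj$ is exactly absorbed.
\item Projecting onto $\ker^\bot\bC$ gives
\begin{align*}
\norm{\bA^\top y + \beta\bC^\top z} \ge \beta\min_i\sminp(C_i)\norm{z_r} - \max_i\smax(A_i)\norm{y}.
\end{align*}
\end{itemize}
A convex combination of these bounds with the $\norm{\bW y}^2$ term, with weights chosen (via $(a-b)^2\ge a^2/2-b^2$) so that the negative cross terms are dominated by $\alpha^2\lminp^2(W)\norm{y_\perp}^2$ or by the first bound, yields
\begin{align*}
\sminp^2(\bB)\gtrsim \min\cbr{\muproj,\ \max_i\lmax(A_iA_i^\top)}\,\norm{(y;z)}^2 \gtrsim \muproj\,\norm{(y;z)}^2.
\end{align*}
The last step uses $\muproj\le \max_i\lmax(A_iA_i^\top)$. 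Dividing the upper bound by this lower bound gives $O(\kappaproj(\kappa_W+\kappa_C))$ when $\muproj>0$.

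\textbf{Main obstacle: the kernel bookkeeping.} I must verify that $(y;z)\perp\ker\bB^\top$ really forces the "bad" components (the part of $\bar y$ in $\ker(\bP_{\ker\bC}\bB'^\top)$, and $z_{\ker}$) to be controlled by the remaining ones. I will characterize $\ker\bB^\top$ explicitly. A vector lies in it iff $y=\mathbf 1\otimes\bar y$ with $\bB'^\top\bar y\in\operatorname{range}\bC^\top$ and $\beta\bC^\top z=-\bA^\top y$. Orthogonality to such vectors then bounds the bad component of $\bar y$ by the good components.

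In the case $\muproj=0$ we have $\bP_{\ker\bC}\bB'^\top=0$, so every consensus $\bar y$ is matched by some $z$ and contributes a kernel direction. The first of the two lower bounds is then unavailable and must be dropped. Orthogonality to these kernel vectors instead ties $\bar y$ to $z_r$, so the $z_r$-bound together with the $\bW$-bound controls everything. The denominator becomes $\max_i\lmax(A_iA_i^\top)$ up to constants, giving $O(\kappa_W+\kappa_C)$. I expect this kernel-orthogonality step, and the accompanying constant chasing for the weights, to be the hardest part. I would handle it first, by writing the kernel vectors as $(\mathbf 1\otimes\bar y;\,-\beta^{-1}(\bC^\top)^{\dagger}\bA^\top(\mathbf 1\otimes\bar y))$ and using orthogonality to them directly.
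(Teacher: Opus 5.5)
Your proposal is correct: your scalings and your upper bound on $\smax^2(\bB)$ match the paper's, except that the paper uses the slightly finer $L_S=\frac1n\smax^2(\bA')\le L_A$ in $\beta^2$ when $\muproj>0$. Your lower bound on $\sminp(\bB)$ takes a different route.

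\textbf{The paper's route.} The paper never computes $\ker\bB^\top$. It writes elements of $\image\bB$ as $\pmat{\bA\\ \bC}\xi+\pmat{\PLb\\ 0}\eta$ and removes the disagreement part $u$ with one Young's inequality against $\bW$. What remains is $\|\bJ^\top(v;w)\|^2$ for the auxiliary matrix $\bJ=\pmat{\PL\bA\\ \bC}$, with $(v;w)\in\image\bJ$ by construction. It then uses $\sminp(\bJ^\top)=\sminp(\bJ)$ and argues over $t\in\image\bJ^\top$. There $\bP_{\ker\bC}t$ automatically lies in $\ker^\bot(\bA'\bP_{\ker\bC})$. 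The negative term of the second Young's inequality involves only the $\ker^\bot\bC$ component of $t$, so it is absorbed simply by enlarging $\beta$. This gives $\sminp^2(\bB)\ge\frac14\muproj$ with absolute constants, and the case $\muproj=0$ follows from $\bP_{\ker\bC}t=0$.

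\textbf{Your route.} You stay with $\bB^\top$ throughout and do the kernel bookkeeping by hand. Orthogonality to $\{0\}\times\ker\bC^\top$ kills $z_{\ker}$. Write $\bA'=(A_1,\ldots,A_n)$ (your $\bB'$) and let $\bar y_0$ be the component of $\bar y$ in $\ker(\bP_{\ker\bC}\bA'^\top)$. Orthogonality to $(\mathbf 1\otimes\bar y_0;\,-\beta^{-1}(\bC^\top)^\dagger\bA'^\top\bar y_0)$ then gives $\sqrt n\,\|\bar y_0\|\le\frac{\sqrt{L_A}}{\beta\sqrt{\mu_C}}\|z_r\|$. Your $z_r$-bound absorbs this once the constant in $\beta^2\asymp L_A/\mu_C$ exceeds one.

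The price is that, in your splitting, the weights cannot all be absolute constants. The $z_r$-bound carries the cross term $-L_A\|\mathbf 1\otimes(\bar y-\bar y_0)\|^2$, which only the $\muproj$-bound can pay for. So its weight must be of order $\muproj/L_A$, and this is exactly why $\sminp^2(\bB)$ comes out of order $\muproj$.

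For instance, take $\alpha^2\lminp^2(W)=\beta^2\mu_C=4L_A$, weight $\muproj/(8L_A)$ on the $z_r$-bound, and weight one on the other two pieces. This is legitimate because the $\bW$-term and the $\ker\bC$ and $\ker^\bot\bC$ projections are orthogonal parts of $\|\bB^\top(y;z)\|^2$. One then gets $\sminp^2(\bB)\ge\frac{7}{40}\muproj$.

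\textbf{What each buys.} Your explicit description of $\ker\bB^\top$ makes the lemma's dichotomy transparent. When $\muproj=0$, the whole consensus block is tied to $z_r$ through kernel directions, which is why the bound jumps to $\sminp^2(\bB)\gtrsim L_A$. The paper's detour through $\bJ$ is shorter and needs neither the kernel computation nor $\muproj$-dependent weights.
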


\begin{theorem}[\textbf{new,} Appendix \ref{app:mixed_coupled_with_local_lower}] \label{thm:mixed_upper_bound}
    Applying Algorithm~\ref{alg:apapc} to a reformulation of problem~\eqref{prob:coupled_with_local} with $\bC \to P_C(\bC^\top\bC)$, $\bW \to P_W(\bW)$ and then $\bB \to P_B(\bB^\top\bB)$ we obtain a method with complexity specified in \Cref{table:smooth_str_convex_optimization_affine_constraints}.
    These complexity bounds are optimal due to corresponding lower bounds.
 \end{theorem}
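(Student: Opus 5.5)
The proof has two parts: an upper bound obtained by composing \Cref{thm:smooth_objectives_apapc_convergence} with \Cref{lem:mixed_coupled_with_local_spectrum} and Chebyshev acceleration, and a matching lower bound.

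\textbf{Reformulation.} First I would write \eqref{prob:coupled_with_local} in the form \eqref{prob:affine_constraints}. Following Appendix~\ref{app:coupled_cons}, introduce auxiliary consensus-type variables $\by$ and set $u=(\bx,\by)$. The constraint becomes $\bA\bx+\alpha\bW\by = \bb$ together with $\beta\bC\bx=\beta\bd$. Its solution set projected onto $\bx$ is exactly the feasible set of the original problem, because $\operatorname{range}\bW$ is the orthogonal complement of the consensus subspace.

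The objective $G(\bx,\by)=\sum_i f_i(x_i)$ is not strongly convex in $\by$. I would handle this as in \cite{yarmoshik2024decentralized}: either add a regularizer $\frac{r}{2}\|\by\|^2$, or augment $G$ by $\frac{r}{2}\|\bA\bx+\alpha\bW\by-\bb\|^2$. With $r\sim L_f/\smax^2(\cdot)$, the resulting function has condition number $O(\kappa_f)$ on the relevant subspace. I would verify that $\by$ stays in $\operatorname{range}\bW$ when initialized there, so the dynamics never leave that subspace.

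\textbf{Preconditioning and upper bound.} Next apply the replacements $\bC\to P_C(\bC^\top\bC)$ and $\bW\to P_W(\bW)$. Each is a polynomial with zero constant term that is nonvanishing on the nonzero spectrum, so kernels and ranges are preserved. After this step the effective $\kappa_C$ and $\kappa_W$ are $O(1)$. Each multiplication by the new matrices costs $O(\sqrt{\kappa_C})$ multiplications by $C_i$ and $O(\sqrt{\kappa_W})$ communications, respectively.

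Then \Cref{lem:mixed_coupled_with_local_spectrum}, applied to the preconditioned block matrix, gives $\kappa_\bB=O(\kappaproj)$. In the case $\muproj=0$ it gives $O(1)$, which is consistent with the convention $\kappaproj=1$.

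A second Chebyshev step $\bB\to P_B(\bB^\top\bB)$ makes the final constraint matrix $O(1)$-conditioned. Each of its applications costs $O(\sqrt{\kappaproj})$ multiplications by $\bB$ and $\bB^\top$. By \Cref{thm:smooth_objectives_apapc_convergence}, APAPC needs $O(\sqrt{\kappa_f}\log(1/\eps))$ iterations, each using one gradient. Multiplying the costs out gives the bounds in the table:
\begin{itemize}
\item $\sqrt{\kappa_f}\sqrt{\kappaproj}$ multiplications by $\bA$;
\item $\sqrt{\kappa_f}\sqrt{\kappaproj}\sqrt{\kappa_C}$ multiplications by $\bC$;
\item $\sqrt{\kappa_f}\sqrt{\kappaproj}\sqrt{\kappa_W}$ communications.
\end{itemize}

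\textbf{Lower bounds (main obstacle).} Gradient optimality is classical: take the worst-case function of Nesterov at a single node. For the other three bounds I would adapt the construction of \cite{yarmoshik2024decentralized} for coupled constraints. Pick $C_i$ so that $\ker\bC$ realizes the projected interaction matrix, making $\kappaproj$ play the role of $\hkappa_A$, and use quadratic $f_i$ whose dual becomes a badly conditioned tridiagonal quadratic. For the communication bound, place the nodes on a path graph with $\kappa_W$ tuned so that information propagates one coordinate per $\sqrt{\kappa_W}$ rounds.

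The $\sqrt{\kappa_C}$ factor needs a separate gadget. I would make each $C_i$ an ill-conditioned local chain, so that the local projection onto $\ker C_i$ has to be computed by a Krylov method, which costs $\sqrt{\kappa_C}$ products per dual step. Combining these instances, for example by placing them in separate coordinate blocks so that the bounds compose, is the step I expect to be hardest. In particular, keeping $\kappaproj$ and $\kappa_C$ independently controllable in a single hard instance may need the most care.
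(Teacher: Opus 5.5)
\textbf{Upper bound.} Your upper-bound argument is essentially the paper's. It uses the augmented-Lagrangian penalty $\frac r2\|\bA\bx+\alpha\bW\by-\bb\|^2$, then Chebyshev preconditioning of $\bW$ and $\bC$, which preserves $\ker\bC$ and hence $\muproj$. Next, \Cref{lem:mixed_coupled_with_local_spectrum} gives $\kappa_\bB=O(\kappaproj)$, a second Chebyshev step is applied to $\bB$, and APAPC runs for $O(\sqrt{\kappa_f}\log(1/\eps))$ iterations.

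You should drop the alternative regularizer $\frac r2\|\by\|^2$. For feasible $\bx$, the minimal-norm admissible $\by$ is $(\alpha\bW)^\dagger(\bb-\bA\bx)$, so this term depends on $\bx$ and moves the $\bx$-minimizer. Only a penalty that vanishes on the feasible set is admissible.

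\textbf{Lower bound.} The genuine gap is the lower bound for $N_C$, and optimality is part of the statement. The target is multiplicative: $N_C\geq\Omega\bigl(\sqrt{\kappa_C}\sqrt{\kappaproj}\sqrt{\kappa_f}\log(1/\eps)\bigr)$. Placing a coupled-constraint hard instance and an ill-conditioned $C$-chain in separate coordinate blocks produces independent subproblems, which an algorithm can solve separately. The coupled block needs no multiplications by $C_i$. The $C$-block can be solved with $O(\sqrt{\kappa_C\kappa_f}\log(1/\eps))$ of them. So such an instance can never certify the $\sqrt{\kappaproj}$ factor; the bounds combine by a maximum, not a product. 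Your plan is fine for $N_{\nabla f}$, $N_A$ and $N_W$.

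\textbf{Missing idea.} The idea of ``$\sqrt{\kappa_C}$ products per dual step'' has to be forced by nesting the gadgets, not juxtaposing them. The paper does this with a two-level chain:
\begin{itemize}
\item Each node's variable is replicated into $l$ copies, with $C_i=\diag\bigl(\sqrt{W_C\otimes I},\sqrt{W_C\otimes I}\bigr)$, where $W_C$ is a path-graph Laplacian with condition number $\kappa_C$. Thus $\ker C_i$ is exactly the consensus subspace of the copies.
\item The coupled blocks built from $E_1$ act only on copies in $\cU_1$ at nodes of $\cV_1$. Those built from $E_2$ act only on copies in $\cU_3$ at nodes of $\cV_3$.
\end{itemize}
Presolving the local consensus recovers exactly the coupled hard instance of \cite{yarmoshik2024decentralized}, so the dual solution is still Nesterov's geometric sequence. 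Meanwhile, each new nonzero coordinate requires information to cross both from $\cV_1$ to $\cV_3$, costing $\Omega(\sqrt{\kappa_W})$ communications, and from $\cU_1$ to $\cU_3$, costing $\Omega(\sqrt{\kappa_C})$ multiplications by $C_i$.

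This also resolves the independent-control issue you flagged. $\muproj$ depends only on $\ker\bC$, while $\kappa_C$ depends only on the nonzero spectrum of $\bC$. Hence $\kappaproj$ is set by the coupled blocks (via $L'_A,\mu'_A$) and $\kappa_C$ by $W_C$, separately.
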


When there are no local constraints ($\bC = 0$) we have $\kappaproj = \hkappa_A$ (see Remark \ref{rem:projected_condition_number}).
In this case, the complexity bounds obtained in this subsection are identical to the bounds in \Cref{thm:coupled_iclr}, providing a strong generalization of the main results from \cite{yarmoshik2024decentralized}.

\subsection{Mixed Constraints}
We introduce $\bK = \diag(\bB, \widetilde{\bB})$, $\bz = \col(\bx, \by, \tilde{\bx})$,  $\bv = \col(\bb, \bc, \tilde{\bc},\mathbf{0})$ and define the feasible set
 $\mathcal{Z} = \R^d \times (\R^m)^n \times (\R^{\tilde{d}})^n$. Then, problem~\eqref{prob:opt_mixed_constraints_intro} can be reformulated as
\begin{equation} \label{eq:mixed_constraints_reformulation}
\min_{\bz \in \mathcal{Z}}
 \;G(\bz) \coloneqq  \sum_{i=1}^n f_i(x_i, \tilde{x}_i) \quad
\text{s.t.}\quad
 \bK \bz = \bv.
\end{equation}
We now gather all our previous results and illustrate how complexity bounds for decentralized optimization with shared variable constraints (Section \ref{sub:shared_variable_constraints}) and with coupled and local constraints (Section \ref{sub:mixed_coupled_and_local}) are combined in the general mixed setting formulated in \eqref{prob:opt_mixed_constraints_intro}.
\Cref{thm:upper_bound_coupled_plus_non_ident_local_constraints_prob} gives upper bounds for problem~\eqref{prob:opt_mixed_constraints_intro}.

\begin{theorem}[\textbf{new,} Appendix~\ref{app:upper_bound_coupled_plus_non_ident_local_constraints_prob}] \label{thm:upper_bound_coupled_plus_non_ident_local_constraints_prob}
    Applying Algorithm~\ref{alg:apapc} to problem~\eqref{eq:mixed_constraints_reformulation}, we obtain a method with complexity specified in \Cref{table:smooth_str_convex_optimization_affine_constraints}.
    In the case of identical local constraints when matrices $\widetilde C_i$ are equal, complexities $N_{\widetilde C}$ and $N_W$ change to 
     \begin{align*}
       N_{\widetilde C} &= O\cbraces{\sqrt{\kappa_f} \sqrt{\kappa_{\widetilde C}} \log\cbraces{\frac1\eps}} \\
         N_W &= O\cbraces{\sqrt{\kappa_f} \sqrt{\kappaproj} \sqrt{\kappa_W} \log\cbraces{\frac1\eps}}.
     \end{align*}
\end{theorem}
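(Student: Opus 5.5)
The plan is to reduce the theorem to the two special cases already treated: Lemma~\ref{lem:mixed_coupled_with_local_spectrum} and Theorem~\ref{thm:mixed_upper_bound} for the $(\bx,\by)$ block, and Theorem~\ref{thm:LN_upper_bound} for the $\tilde\bx$ block. The observation that makes this work is that $\bK=\diag(\bB,\widetilde\bB)$ is block diagonal. Hence $\bK^\top\bK=\diag(\bB^\top\bB,\widetilde\bB^\top\widetilde\bB)$, and its spectrum is the union of the two spectra. The objective $G(\bz)=\sum_i f_i(x_i,\tilde x_i)$ is $L_f$-smooth and $\mu_f$-strongly convex in the $(\bx,\tilde\bx)$ variables, exactly as in the special cases. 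For the auxiliary variable $\by$ I would use the same regularization device as in Appendix~\ref{app:coupled_cons}. The constants therefore stay $O(L_f)$ and $\Omega(\mu_f)$, and the gradient complexity $O(\sqrt{\kappa_f}\log(1/\eps))$ follows directly from Theorem~\ref{thm:smooth_objectives_apapc_convergence}, part~1, once $\kappa$ of the preconditioned constraint matrix is $O(1)$.

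\textbf{Step 1: preconditioning.} I would apply the chain of Chebyshev substitutions to the two diagonal blocks separately. For the first block this is $\bC\to P_C(\bC^\top\bC)$, $\bW\to P_W(\bW)$, then $\bB\to P_B(\bB^\top\bB)$. For the second block it is $\bW\to P_W(\bW)$, then $\widetilde\bB\to P_{\widetilde C}(\widetilde\bB^\top\widetilde\bB)$. Each block then has condition number $O(1)$. To make the block-diagonal $\bK'$ also have condition number $O(1)$, I would rescale the second block by a constant so that the largest singular values of the two blocks coincide. Scaling a block row leaves the feasible set unchanged.

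\textbf{Step 2: counting oracle calls.} One multiplication by $\bK'$ costs the cost of the first block plus the cost of the second block.
\begin{itemize}
\item For the first block, Lemma~\ref{lem:mixed_coupled_with_local_spectrum} gives $\kappa_\bB=O(\kappaproj(\kappa_W+\kappa_C))$ after the inner preconditioning, in which $\kappa_W,\kappa_C$ become $O(1)$. A multiplication therefore costs $O(\sqrt{\kappaproj})$ multiplications by $\bA$, $O(\sqrt{\kappaproj}\sqrt{\kappa_C})$ by $\bC$, and $O(\sqrt{\kappaproj}\sqrt{\kappa_W})$ communications.
\item For the second block, Theorem~\ref{thm:LN_upper_bound} gives $O(\sqrt{\hkappa_{\widetilde C^\top}})$ multiplications by $\widetilde\bC$ and $O(\sqrt{\hkappa_{\widetilde C^\top}}\sqrt{\kappa_W})$ communications.
\end{itemize}
Multiplying each count by the $O(\sqrt{\kappa_f}\log(1/\eps))$ APAPC iterations reproduces every row of Table~\ref{table:smooth_str_convex_optimization_affine_constraints}. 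The communication counts add, which gives the $\sqrt{\kappaproj}+\sqrt{\hkappa_{\widetilde C^\top}}$ factor. In principle the two blocks' gossip steps could share rounds, but an additive bound suffices here.

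\textbf{Step 3: identical constraints.} When $\widetilde C_i=\widetilde C$, I would not use the consensus-plus-local reformulation. Instead I would use the approach of \cite{rogozin2022decentralized_affine} (Section~\ref{sub:ident_local}): enforce $\widetilde C\tilde x_i=\tilde c$ at each node, precondition $\widetilde C$ on its own, and keep only $\bW$ for consensus. The $\tilde\bx$-block then has $\kappa=O(\kappa_W+\kappa_{\widetilde C})$ with decoupled preconditioning. This costs $O(\sqrt{\kappa_{\widetilde C}})$ multiplications by $\widetilde C$ and $O(\sqrt{\kappa_W})$ communications per outer iteration, and these communications are dominated by the first block's $\sqrt{\kappaproj}\sqrt{\kappa_W}$ since $\kappaproj\ge1$.

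\textbf{Main obstacle.} The hard step is justifying that APAPC on the joint problem really has gradient complexity $\sqrt{\kappa_f}$ despite the auxiliary variable $\by$. $G$ is not strongly convex in $\by$, so I must check that the augmentation/regularization from Appendix~\ref{app:coupled_cons} carries over unchanged when $\tilde\bx$ is coupled to $\bx$ inside the same $f_i$. I would first try to show that the augmented objective is jointly $\Omega(\mu_f)$-strongly convex on $\mathcal{Z}$ with smoothness $O(L_f)$, using that the augmentation term depends only on $(\bx,\by)$.
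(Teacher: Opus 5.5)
Your proposal follows the paper's proof essentially step for step. The shared steps are: block-diagonal $\bK$ with its two blocks Chebyshev-preconditioned separately, the augmented penalty $\frac r2\sqn{\bA\bx+\alpha\bW'\by-\bb}$ with $r=\mu_f/(2L_A)$ giving $\kappa_G=O(\kappa_f)$, additive per-iteration oracle counts, and decoupled preconditioning of $\widetilde C$ via Lemma~\ref{lem:local_spectrum} in the identical case. There are two minor differences: the paper gets $\kappa_{\bK'}=O(1)$ directly from the Chebyshev bounds placing the positive singular values of $\bB_1',\bB_2'$ in $[11/15,19/15]$ rather than by rescaling, and the strong convexity you flag holds only for $\by\in\cL_m^\perp=\image\bW'$, not on all of $\mathcal Z$, which suffices because APAPC started at $\by^0=0$ never leaves that subspace.
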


Thus, in the presence of both coupled and local constraints acting on different sets of variables, our unified framework yields tight complexity bounds. These results interpolate between the purely coupled and purely local cases, showing that the overall difficulty is governed by the joint conditioning of $\bA$, $\bC$, and the network topology $W$.
These bounds are optimal due to combination of lower bounds in \Cref{thm:LI_upper_bound,thm:coupled_iclr} in case of identical local constraints and \Cref{thm:LN_upper_bound,thm:coupled_iclr} in the general case.

\section{Extension to Non-Strongly Convex and Nonsmooth Optimization}\label{sec:nonsmooth_affine_constraints_statements}

\subsection{Smooth and Convex Optimization} \label{sec:smooth_convex_case}

Using \Cref{thm:smooth_objectives_apapc_convergence} in Section~\ref{sec:affine_constraints_methods}, we derive the upper bounds
on the iteration complexity for problem~\eqref{prob:opt_mixed_constraints_intro} in smooth, non-strongly convex regime, as stated in \Cref{thm:upper_bounds_mixed_smooth_nonstrongly_convex}.

\begin{theorem}[\textbf{new,} Appendix~\ref{app:smooth_convex_results}] \label{thm:upper_bounds_mixed_smooth_nonstrongly_convex}
     Let Assumptions~\ref{ass:W} and~\ref{ass:smooth_and_str_convex_of_f_i} hold with $L_f > \mu_f = 0$. Consider applying Algorithm~\ref{alg:apapc} with a regularization technique to problem~\eqref{eq:mixed_constraints_reformulation}, which is a reformulation of problem~\eqref{prob:opt_mixed_constraints_intro}. Then the resulting method has complexity specified in Table~\ref{table:smooth_conv_optimization_constraints}.
\end{theorem}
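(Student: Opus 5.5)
The plan is to reduce Theorem~\ref{thm:upper_bounds_mixed_smooth_nonstrongly_convex} to part~2 of \Cref{thm:smooth_objectives_apapc_convergence}, applied to the reformulation~\eqref{eq:mixed_constraints_reformulation} after the same Chebyshev preconditioning used in the strongly convex case.

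\textbf{Step 1: the objective.} Take $G(\bz)=\sum_i f_i(x_i,\tilde x_i)$. It is $L_f$-smooth and convex. Its dependence on the auxiliary variable $\by$ introduced by the $\alpha\bW$ block of $\bB$ is trivial, which is harmless because the regularization below supplies strong convexity in all coordinates.

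\textbf{Step 2: the regularized problem.} Form $G^\nu(\bz)=G(\bz)+\frac{\nu}{2}\|\bz-\bz^0\|^2$ with $\nu=\eps/R^2$. This function is $(L_f+\nu)$-smooth and $\nu$-strongly convex, with condition number $O(L_fR^2/\eps)$.

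\textbf{Step 3: constraint conditioning.} Precondition the constraint matrix $\bK=\diag(\bB,\widetilde\bB)$ blockwise, exactly as in Theorems~\ref{thm:mixed_upper_bound} and~\ref{thm:LN_upper_bound}:
\begin{itemize}
\item first $\bC\to P_C(\bC^\top\bC)$ and $\bW\to P_W(\bW)$;
\item then $\bB\to P_B(\bB^\top\bB)$ and $\widetilde\bB\to P_C(\widetilde\bB^\top\widetilde\bB)$.
\end{itemize}
After this, the resulting $\bK$ has $\kappa_{\bK}=O(1)$.

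\textbf{Step 4: cost per outer iteration.} One multiplication by the preconditioned $\bK$ costs the following. These counts are read off Lemma~\ref{lem:mixed_coupled_with_local_spectrum} and the block-diagonal structure.
\begin{itemize}
\item $O(\sqrt{\kappaproj})$ multiplications by $\bA$;
\item $O(\sqrt{\kappaproj}\sqrt{\kappa_C})$ multiplications by $\bC$;
\item $O(\sqrt{\hkappa_{\widetilde C^\top}})$ multiplications by $\widetilde\bC$;
\item $O\bigl((\sqrt{\kappaproj}+\sqrt{\hkappa_{\widetilde C^\top}})\sqrt{\kappa_W}\bigr)$ communications.
\end{itemize}
The two blocks act on disjoint variables, so their costs add rather than multiply.

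\textbf{Step 5: combine.} Part~2 of \Cref{thm:smooth_objectives_apapc_convergence} with $\kappa_\bK=O(1)$ gives $O(\sqrt{L_fR^2/\eps}\log(1/\eps))$ outer iterations. Each outer iteration uses one gradient of $G$. Multiplying by the per-iteration costs of Step~4 fills in Table~\ref{table:smooth_conv_optimization_constraints}: the entries are those of Table~\ref{table:smooth_str_convex_optimization_affine_constraints} with $\sqrt{\kappa_f}$ replaced by $\sqrt{L_fR^2/\eps}$.

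\textbf{Step 6: accuracy guarantees.} The guarantee delivered is $G(\bz^N)-G(\bz^*)\le\eps$ together with $\|\bK\bz^N-\bv\|^2=O(\smax^2(\bK)\eps^2)$.
\begin{itemize}
\item Feasibility is stated for the preconditioned system. It has the same solution set, and since $\smax$ of the preconditioned matrix is normalized to $O(1)$, the constraint violation transfers up to constants.
\item The function gap is measured in $G=\sum_i f_i$, which coincides with the objective of~\eqref{prob:opt_mixed_constraints_intro}.
\end{itemize}

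\textbf{Main obstacle.} I expect the delicate point to be that $R$ must bound $\|\bz^0-\bz^*\|$ for the lifted variable, including the auxiliary $\by$ block. A solution $\bz^*$ with $\by^*$ of bounded norm exists only after choosing the minimum-norm $\by^*$. I would handle this by defining $R$ on the full lifted variable, which is necessary in any case because $\by^*$ is not determined by $\bx^*$ alone.
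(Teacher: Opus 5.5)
Your proposal is correct and follows essentially the same route as the paper. Like the paper, you regularize with $\nu=\eps/R^2$, precondition $\bK=\diag(\bB,\widetilde\bB)$ blockwise with Chebyshev acceleration (first $\bC$ and $\bW$, then each block, as in Theorems~\ref{thm:mixed_upper_bound} and~\ref{thm:LN_upper_bound}), invoke part~2 of \Cref{thm:smooth_objectives_apapc_convergence}, and add the per-iteration costs of the two blocks. Your explicit two-stage preconditioning, and your remark that $R$ must be measured on the lifted variable with a minimum-norm $\by^*$, spell out points that the paper's appendix leaves implicit.
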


From this Theorem, we directly obtain the results for other types of constraints, they are summarized in Table~\ref{table:smooth_conv_optimization_constraints}. See Appendix~\ref{app:smooth_convex_results} for explanations. Note that these upper bounds are suboptimal due to the logarithmic factor.

\subsection{Nonsmooth and Convex Optimization} \label{sec:nonsmooth_convex_case}

Analogically, we derived the results for Nonsmooth and Convex case (Table~\ref{table:nonsmooth_conv_optimization_constraints}).

\begin{theorem}[\textbf{new,} Appendix~\ref{app:nonsmooth_convex_results}] \label{thm:nonsmooth_convex_upper_bounds}
    Let Assumptions \ref{ass:nonsmooth_and_str_convex_of_f_i} hold with $\mu_f = 0$ and $M_f > 0$.
    Applying Gradient Sliding to problem~\eqref{eq:mixed_constraints_reformulation} with Chebyshev Acceleration we obtain a method 
    with complexity specified in \Cref{table:nonsmooth_conv_optimization_constraints}.
\end{theorem}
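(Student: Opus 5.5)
The plan is to apply \Cref{thm:upper_bound_nonsmooth_affine_constraints} (Gradient Sliding, convex case) to the reformulation \eqref{eq:mixed_constraints_reformulation}, after preconditioning the constraint matrix $\bK$ with Chebyshev acceleration as in \Cref{subsec:chebyshev_acc}. The objective $G(\bz)=\sum_i f_i(x_i,\tilde x_i)$ is convex. Its subgradient is the stacked vector of local subgradients, so by Assumption~\ref{ass:nonsmooth_and_str_convex_of_f_i} we have $\|G'(\bz)\|\le \sqrt n M_f$. The distance $R$ is measured in the stacked space. In this way the subgradient count $O(M^2R^2/\eps^2)$ transfers directly, with the standard normalization of $\sqrt n$ factors absorbed into $M_f,R$ as is done in the consensus literature.

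The key point is that the number of multiplications by the constraint matrix scales as $\sqrt{\kappa_{\bK}}$. Since $\bK=\diag(\bB,\widetilde\bB)$, we have $\kappa_{\bK}\le \max(\smax^2(\bB),\smax^2(\widetilde\bB))/\min(\sminp^2(\bB),\sminp^2(\widetilde\bB))$. Before combining the two blocks, I will normalize them so that their largest singular values coincide, e.g.\ by rescaling with a constant. Then $\kappa_\bK=O(\max(\kappa_\bB,\kappa_{\widetilde\bB}))$.

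For $\bB$ I will use \Cref{lem:mixed_coupled_with_local_spectrum}, applied after first replacing $\bC\to P_C(\bC^\top\bC)$ and $\bW\to P_W(\bW)$, so that $\kappa_C,\kappa_W$ become $O(1)$ and $\kappa_\bB=O(\kappaproj)$. For $\widetilde\bB$ I will use the bound behind \Cref{thm:LN_upper_bound} (Lemma~2 of \cite{yarmoshik2024decentralized} with scaling $\gamma$), which gives $\kappa_{\widetilde\bB}=O(\hkappa_{\widetilde C^\top})$ after $\bW\to P_W(\bW)$. Each multiplication by the preconditioned $\bK$ costs one multiplication by $\bA$ and by $\widetilde\bC$, $O(\sqrt{\kappa_C})$ multiplications by $\bC$, and $O(\sqrt{\kappa_W})$ communications. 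Multiplying by $N_\bK=O\bigl(\tfrac{MR}{\eps}\sqrt{\kappaproj+\hkappa_{\widetilde C^\top}}\bigr)$ then gives the table entries. For example, communications number $O\bigl(\tfrac{M_fR}{\eps}(\sqrt{\kappaproj}+\sqrt{\hkappa_{\widetilde C^\top}})\sqrt{\kappa_W}\bigr)$. If the table reports separate counts per matrix, they come from running separate Chebyshev outer polynomials on the two diagonal blocks. This is legitimate because the blocks act on disjoint variables, so $P_\bK$ can be taken blockwise, $\diag(P_B(\bB^\top\bB),P_{\widetilde B}(\widetilde\bB^\top\widetilde\bB))$, and each block's cost is then counted separately.

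The main obstacle is the feasibility guarantee. \Cref{thm:upper_bound_nonsmooth_affine_constraints} controls $\|\bK'\bz-\bv'\|$ for the \emph{preconditioned} system, while we need the original constraints of \eqref{prob:opt_mixed_constraints_intro}. I will handle this by noting that the Chebyshev polynomial is bounded below by a constant times $\lambda$ on the nonzero spectrum. This gives $\|\bK\bz-\bv\|\le O(\smax(\bK)/\sminp(\bK)\cdot)\,\|\bK'\bz-\bv'\|$, up to scaling, so the required accuracy changes only through the choice of the penalty $r$ in \eqref{eq:affine_constraints_penal_prob} and a logarithmic or constant factor in $\eps$. I will also check that a solution of the reformulation, once feasible, yields consensus $\tilde x_1=\dots=\tilde x_n$ and satisfaction of the coupled constraints. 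This relies on the kernel identities used in Appendix~\ref{app:coupled_cons}.
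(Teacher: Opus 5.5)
Your route is the paper's: apply \Cref{thm:upper_bound_nonsmooth_affine_constraints} to \eqref{eq:mixed_constraints_reformulation} after Chebyshev preconditioning of $\bK$. You use \Cref{lem:mixed_coupled_with_local_spectrum} for $\bB$, with $\bC$ and $\bW$ preconditioned inside, and Lemma~2 of \cite{yarmoshik2024decentralized} for $\widetilde\bB$.

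One correction is needed. The outer blockwise polynomial $\diag\bigl(P_B(\bB^\top\bB),P_{\widetilde B}(\widetilde\bB^\top\widetilde\bB)\bigr)$, which you present as a conditional afterthought, is required, and it is exactly what the paper does. In your main count (inner preconditioning only, with Gradient Sliding paying $\sqrt{\kappa_\bK}$), only the communication entry comes out right. The $\bA$, $\bC$ and $\widetilde\bC$ counts each pick up $\sqrt{\kappaproj}+\sqrt{\hkappa_{\widetilde C^\top}}$ instead of their own block's factor. The subgradient count also acquires an extra $\frac{M_fR}{\eps}\sqrt{\kappa_\bK}$ term. With the outer step, $\kappa_{\bK'}=O(1)$, so $N_{\bK'}=O(M_fR/\eps)$. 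Each product with $\bK'$ then costs $O(\sqrt{\kappaproj})$ products with $\bB$ and $O(\sqrt{\hkappa_{\widetilde C^\top}})$ products with $\widetilde\bB$, which yields every entry of \Cref{table:nonsmooth_conv_optimization_constraints}.

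The feasibility transfer you call the main obstacle is not treated in the paper: its guarantee is for the system actually passed to Gradient Sliding. It is therefore not needed for the stated counts. If you keep it, note that your bound through $\sminp(\bK)$ loses a factor $\sqrt{\kappa_\bK}$ after normalizing $\smax(\bK)=1$. That is not a constant or logarithmic factor, so your conclusion does not follow from it.

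A constant-factor bound does hold. Write $\bK'\bz-\bv'=\frac{P(\bK^\top\bK)}{\bK^\top\bK}\bK^\top(\bK\bz-\bv)$ and expand $\bK\bz-\bv\in\image\bK$ in left singular vectors. The coefficient on each singular pair is multiplied by $P(\sigma_j^2)/\sigma_j\ge \min P/\smax(\bK)$. Hence $\norm{\bK\bz-\bv}\le \smax(\bK)\norm{\bK'\bz-\bv'}/\min P$, where $\min P$ is taken over the nonzero spectrum and is $\Theta(1)$ there.
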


\subsection{Nonsmooth and Strongly Convex Optimization} \label{sec:nonsmooth_strongly_convex_case}

In the strongly convex and non-smooth setting, we consider the problem~\eqref{prob:opt_mixed_constraints_intro} on a bounded set $X_1 \times \dots \times X_n \times \widetilde{X}$,  otherwise Assumption~\ref{ass:nonsmooth_and_str_convex_of_f_i} with $\mu_f > 0$ cannot be held. Denote $\mathcal{X} = X_1 \times \dots \times X_n$, $\widetilde{\mathcal{X}} = \left(\widetilde{X}\right)^n$. Lemma~\ref{lem:strong_conv_G_penal} ensures that the penalized reformulation of $G$ is strongly convex.

\begin{lemma}[\textbf{new,} Appendix~\ref{app:proof_strong_conv_G_penal}] \label{lem:strong_conv_G_penal}
    Suppose that Assumption~\ref{assum:strongly_convex} holds with $\mu_f > 0 $. Let $\alpha$ and $\varepsilon$ satisfy following conditions:
    \begin{equation} \label{eq:strcv_non_smooth_coupled_constraints_constants_condition_main}
        \alpha^2 = \frac{\mu_{\bA} + L_{\bA}}{\mu_{\bW}}, \quad \varepsilon \leq \frac{4 r^2 \mu_\bA}{\mu_f}.
    \end{equation}
    Then, the penalized function of $G(\bx, \by, \tilde{\bx})$, where $G$ is defined in \eqref{eq:mixed_constraints_reformulation}, is $\frac{\mu_f}{2}$-strongly convex on $\mathcal{X} \times \mathcal{L}_m^\perp \times \widetilde{\mathcal{X}}$.
\end{lemma}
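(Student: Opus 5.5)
The plan is to write the penalized function explicitly and show that the quadratic penalty supplies curvature in every direction where $G$ has none. Recall that in the coupled-constraint reformulation (Appendix~\ref{app:coupled_cons}) the variable $\by$ is an auxiliary variable. It enters only through the constraint $\bA\bx + \alpha\bW\by = \bb$, and $\mathcal{L}_m^\perp$ is the orthogonal complement of the consensus subspace, i.e.\ $\mathcal{L}_m^\perp = \ker^\perp \bW$.

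The penalized function is
\[
H_r(\bx,\by,\tilde\bx) = G(\bx,\tilde\bx) + \frac{r}{2}\normsq{\bK\bz-\bv}.
\]
I will split it as $H_r = G + \frac r2\normsq{\bA\bx+\alpha\bW\by-\bb} + (\text{convex terms})$. The convex terms are the penalties for $\bC\bx = \bc$, $\widetilde\bC\tilde\bx = \tilde\bc$ and $\bW\tilde\bx = 0$, and since they are convex they can only help. $G$ is $\mu_f$-strongly convex in $(\bx,\tilde\bx)$ but constant in $\by$. So it suffices to prove the following: for every direction $h = (h_x, h_y, h_{\tilde x})$ with $h_y \in \mathcal{L}_m^\perp$,
\[
\mu_f\normsq{h_x} + \mu_f\normsq{h_{\tilde x}} + r\normsq{\bA h_x + \alpha\bW h_y} \ \ge\ \frac{\mu_f}{2}\left(\normsq{h_x}+\normsq{h_y}+\normsq{h_{\tilde x}}\right).
\]
Here the factor $r$ comes from the second-order term of $\frac r2\normsq{\cdot}$ along $h$, and strong convexity in the first-order sense follows from this quadratic-form bound. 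The $h_{\tilde x}$ part is immediate.

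The key step is a lower bound on $\normsq{\bA h_x + \alpha \bW h_y}$. Since $h_y \perp \ker\bW$, we have $\normsq{\bW h_y} \ge \mu_\bW \normsq{h_y}$, where $\mu_\bW = \lminp(\bW)^2$ or $\lminp(\bW^\top \bW)$ depending on convention. Young's inequality gives $\norm{a+b}^2 \ge \frac12\norm{b}^2 - \norm{a}^2$. Together with $\normsq{\bA h_x} \le L_\bA \normsq{h_x}$ this yields
\[
\normsq{\bA h_x+\alpha\bW h_y} \ge \frac{\alpha^2\mu_\bW}{2}\normsq{h_y} - L_\bA\normsq{h_x}.
\]
I would then take a convex combination: keep a $(1-t)$ fraction of the $\mu_f\normsq{h_x}$ term, and use a $t$ fraction of $r\normsq{\cdot}$ via the bound above. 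Substituting $\alpha^2\mu_\bW = \mu_\bA + L_\bA$ makes the $h_x$ loss $rL_\bA$ sit against the $h_y$ gain $r(\mu_\bA+L_\bA)/2$.

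The main obstacle is calibrating constants so that both coefficients are at least $\mu_f/2$. The $h_y$ coefficient needs roughly $t r(\mu_\bA+L_\bA)/2 \ge \mu_f/2$. The $h_x$ coefficient needs $\mu_f - t r L_\bA \ge \mu_f/2$ after also adding the contribution $\mu_\bA$ from the range of $\bA$. The condition $\varepsilon \le 4r^2\mu_\bA/\mu_f$ should be exactly what links the penalty size (with $r$ chosen as a function of $\varepsilon$ in the sliding scheme) to these two requirements. I would first try the choice $t = \mu_f/(r(\mu_\bA+L_\bA))$ and then verify that the stated condition implies $t\le 1$ and that the $h_x$ coefficient stays at least $\mu_f/2$. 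If this fails, I would redo the Young split with a weight $\frac{1}{1+c}$ versus $\frac1c$, optimizing $c$ against the given relation for $\alpha^2$.
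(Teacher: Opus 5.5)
Your reduction to a quadratic-form inequality is sound, and for nonsmooth $f_i$ it is arguably cleaner than the paper's Bregman notation. I write $\rho$ for your penalty parameter, to keep it apart from the lemma's $r$. The calibration you commit to cannot succeed for any $t$ or $\rho$. With the fixed split $\normsq{a+b}\ge\frac12\normsq{b}-\normsq{a}$ and $\alpha^2\mu_\bW=\mu_\bA+L_\bA$, spending a portion $s=t\rho\le\rho$ of the penalty buys $\frac{s}{2}(\mu_\bA+L_\bA)$ on $\normsq{h_y}$ at a cost of $sL_\bA$ on $\normsq{h_x}$. You need the gain to be at least $\mu_f/2$ and the cost at most $\mu_f/2$. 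That forces $(\mu_\bA+L_\bA)/(2L_\bA)\ge 1$, i.e.\ $\mu_\bA\ge L_\bA$, which fails whenever $\mu_\bA<L_\bA$. Concretely, your $t=\mu_f/(\rho(\mu_\bA+L_\bA))$ leaves the coefficient $\mu_f\mu_\bA/(\mu_\bA+L_\bA)$ on $\normsq{h_x}$, which is below $\mu_f/2$. The ``contribution $\mu_\bA$ from the range of $\bA$'' does not rescue this. After Young, $\normsq{\bA h_x}$ enters with a minus sign, so only the upper bound $L_\bA\normsq{h_x}$ can be used, and $h_x$ need not lie in $\ker^\perp\bA$ anyway.

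The step that closes the proof is your fallback, and it is exactly the paper's argument. Use the whole penalty with the $\rho$-dependent split $\normsq{a+b}\ge\frac{1}{1+c}\normsq{b}-\frac1c\normsq{a}$ and $c=2\rho L_\bA/\mu_f$; the paper's $\delta$ is $c/(1+c)$. The cost on $\normsq{h_x}$ is then exactly $\mu_f/2$, and
\[
\frac{\rho(\mu_\bA+L_\bA)}{1+2\rho L_\bA/\mu_f}\ \ge\ \frac{\mu_f}{2}\quad\iff\quad 2\rho\,\mu_\bA\ \ge\ \mu_f .
\]
So the surplus $\mu_\bA$ in $\alpha^2\mu_\bW$ is what pays for the $1$ in $1+c$; this is the real role of $\mu_\bA$. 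To see the stated hypothesis appear, you must also fix the normalization. The paper's penalized objective \eqref{eq:mixed_constraints_nonsmooth_penalized} is $F+\frac{r^2}{\varepsilon}\normsq{\bK\bz-\bv}$, so $\rho=2r^2/\varepsilon$. Then $2\rho\mu_\bA\ge\mu_f$ is precisely $\varepsilon\le 4r^2\mu_\bA/\mu_f$. As written, your main line fails, and you neither carry out this choice of $c$ nor identify the penalty coefficient, so the proposal does not yet prove the lemma. Committing to the fallback with these two ingredients would give a complete proof.
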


By integrating the results from Section~\ref{subsec:sliding}, Section~\ref{sec:affine_constraints_statements} and Lemma~\ref{lem:strong_conv_G_penal}, we establish upper bounds on the iteration complexity of decentralized optimization with mixed affine constraints. The summarization of results is presented in Table~\ref{table:nonsmooth_strconv_optimization_constraints}. 

\begin{theorem}[\textbf{new}, Appendix~\ref{app:proof_nonsmooth_strongly_convex_upper_bounds}] \label{thm:nonsmooth_strongly_convex_upper_bounds}
     Let Assumptions \ref{ass:nonsmooth_and_str_convex_of_f_i} hold with $\mu_f > 0$ and $M_f > 0$.
    Applying Restarted Gradient Sliding with Chebyshev Acceleration we obtain a method 
    with complexity specified in \Cref{table:nonsmooth_strconv_optimization_constraints}.
\end{theorem}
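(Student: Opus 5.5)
The plan is to mirror the convex nonsmooth argument (\Cref{thm:nonsmooth_convex_upper_bounds}) and use the strongly convex branch of \Cref{thm:upper_bound_nonsmooth_affine_constraints} instead of the convex one.

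\textbf{Step 1: reformulation and preconditioning.} Work with the reformulation \eqref{eq:mixed_constraints_reformulation}, whose constraint matrix is $\bK = \diag(\bB, \widetilde\bB)$. Apply Chebyshev acceleration blockwise: $\bC \to P_C(\bC^\top\bC)$ and $\bW \to P_W(\bW)$ inside $\bB$, then $\bB \to P_B(\bB^\top\bB)$, and similarly for $\widetilde\bB$. By \Cref{lem:mixed_coupled_with_local_spectrum} and the analogous bound for $\widetilde\bB$ from \Cref{thm:LN_upper_bound}, the preconditioned matrix has condition number $O(1)$. Each multiplication by it costs the following:
\begin{itemize}
\item $O(\sqrt{\kappaproj}+\sqrt{\hkappa_{\widetilde C^\top}})\cdot O(\sqrt{\kappa_W})$ communications;
\item $O(\sqrt{\kappaproj})$ multiplications by $\bA$;
\item $O(\sqrt{\kappaproj}\sqrt{\kappa_C})$ multiplications by $\bC$;
\item $O(\sqrt{\hkappa_{\widetilde C^\top}})$ multiplications by $\widetilde\bC$.
\end{itemize}
This is the same accounting as in the smooth case, so the per-iteration costs carry over.

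\textbf{Step 2: strong convexity of the objective.} Consider the penalized objective $H_r(\bz) = G(\bz) + \frac r2\|\bK\bz-\bv\|^2$. I would take the domain $\mathcal X\times\mathcal L_m^\perp\times\widetilde{\mathcal X}$. The auxiliary variable $\by$ from the coupled-constraint reformulation is not penalized strongly by $G$, which is why it is restricted to $\mathcal L_m^\perp$. Invoke \Cref{lem:strong_conv_G_penal} with $\alpha^2=(\mu_\bA+L_\bA)/\mu_\bW$ and $\varepsilon\le 4r^2\mu_\bA/\mu_f$. This gives $\mu_f/2$-strong convexity of the penalized function on that set.

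Next, check that $G$ has $M$-bounded subgradients with $M = O(\sqrt n M_f)$; this follows from Assumption~\ref{ass:nonsmooth_and_str_convex_of_f_i} summed over the nodes. Also check that restricting $\by$ to $\mathcal L_m^\perp$ loses no solutions. The $\by$-component of any optimal dual or primal point can be projected onto $\mathcal L_m^\perp$, since $\bW\by$ only depends on that component.

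\textbf{Step 3: apply the restarted Gradient Sliding bound.} Apply part 2 of \Cref{thm:upper_bound_nonsmooth_affine_constraints} with $\mu=\mu_f/2$ and $\kappa_B=O(1)$ after preconditioning. This gives $N_{G'}=O(M_f^2/(\mu_f\eps))$ subgradient calls and $N_K=O(M_f/\sqrt{\mu_f\eps})$ multiplications by the preconditioned matrix. Multiplying $N_K$ by the per-multiplication costs from Step 1 yields the table entries. For example, the communication count is $O\bigl(\tfrac{M_f}{\sqrt{\mu_f\eps}}(\sqrt{\kappaproj}+\sqrt{\hkappa_{\widetilde C^\top}})\sqrt{\kappa_W}\bigr)$. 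The special cases (consensus, coupled only, local only) follow by dropping the corresponding blocks.

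\textbf{Main obstacle.} The hardest part is reconciling the penalty choice. \Cref{thm:upper_bound_nonsmooth_affine_constraints} needs $r$ large enough, of order $\|\text{dual solution}\|/\eps$, to get an $(\eps,O(\eps))$-solution. Meanwhile \Cref{lem:strong_conv_G_penal} requires $\eps\le 4r^2\mu_\bA/\mu_f$, i.e.\ $r$ not too small. Both conditions push $r$ upward, so I expect them to be compatible. The real issue is that $r$ does not enter the subgradient count but does enter the smoothness $r\lmax(\bK^\top\bK)$ of the penalty. I would check that the sliding analysis only uses $\sqrt{r\lmax/\mu}$ through the ratio $\kappa_B$ combined with the dual bound. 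This is precisely the step I would verify carefully against \Cref{app:gradient_sliding_discuss}.
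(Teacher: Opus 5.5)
Your proposal matches the paper's proof essentially step for step: restrict $\by$ to $\cL_m^\perp$, get strong convexity of the penalized objective from \Cref{lem:strong_conv_G_penal}, apply the restarted branch of \Cref{thm:upper_bound_nonsmooth_affine_constraints} with $\kappa = O(1)$, and multiply $N_{\bK}$ by the Chebyshev per-multiplication costs. The obstacle you flag is already handled inside that theorem, whose proof bounds the penalty's smoothness by $2R_{\operatorname{dual}}^2\smax^2(\bK)/\eps \le 2M^2\kappa_{\bK}/\eps$; what remains is to note that the prescribed coefficient $R_{\operatorname{dual}}^2/\eps$ meets the lemma's lower bound $\mu_f/(4\mu_\bA)$ once $\eps$ is small.
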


\section{Conclusion}\label{sec:conclusion}

We unify a series of results in decentralized optimization under a problem statement of mixed affine constraints. The generality of our formulation reduces to horizontal and vertical federated learning, control of distributed systems and other topics in distributed machine learning. Our aim is to analyze the problem systematically. To do this, we prove lower complexity bounds and provide corresponding optimal methods for different variants of our problem statement.

The logical continuation of the paper is the discussion of how different problem classes reduce to each other (see i.e. results in Table \ref{table:smooth_str_convex_optimization_affine_constraints}). Can the complexity of one optimization problem be reduced if the problem is rewritten in a different form?

A different future direction is to consider constraints $\sumin (A_i x_i + \tilde A_i \tilde x_i - b_i) = 0$. We hypothesize that such constraints can cover new machine learning formulations, i.e. mixture of global and local models in federated learning and distributed self-supervised learning.

\bibliography{references}
\bibliographystyle{icml2026}

\newpage
\appendix
\onecolumn

\section{Additional notation}

By $\cL_m$ we denote the so-called consensus space, which is given as $\cL_m = \{(y_1,\ldots,y_n) \in (\R^m)^n : y_1 , \ldots , y_n \in \R^m\; \text{ and }\; y_1 = \cdots = y_n\}$, and $\cL_m^\perp$ denotes the orthogonal complement to $\cL_m$, which is given as
\begin{equation}\label{eq:Lperp}
	\cL_m^\perp = \{(y_1,\ldots,y_n) \in (\R^m)^n : y_1 , \ldots , y_n \in \R^m\;\text{ and }\; y_1 + \cdots + y_n = 0\}.
\end{equation}

If not otherwise specified, for any matrix $A$ we denote by $L_A$ and $\mu_A$ some upper and lower bound on its maximal and minimal positive squared singular values respectively:
\begin{equation}\label{eq:mat_Lmu}
  \lmax(A^\top A) = \smax^2(A) \leq L_{A},
    \qquad
	\mu_{A} \leq \sminp^2(A) = \lminp(A^\top A).
\end{equation}
When referring to a group of matrices $A = (A_1\ldots A_n)$, we interpret these parameters as defined for the block-diagonal matrix, e.g. $L_A = \max_i{\smax^2(A_i)}$, $\mu_A = \min_i{\sminp^2(A_i)}$.

\newpage

\section{Problem Formulations for Mixed Constraints}\label{app:problem_formulations}

\paragraph{Mixture of Local and Global Models.}

The difference with consensus optimization is that locally held model weights may change from one node to another.
\begin{align*}
	\min_{\substack{z, x_1, \ldots, x_n}}~ &\sumin f_i(x_i) + \frac\lambda2 \norm{x_i - z}^2 \\
	\text{s.t. } &x_1 + \cdots + x_n = nz %
\end{align*}
The variable $z$ is stored at the first node, and we come to a problem with coupled constraints.

\paragraph{Distributed Multi-Task Learning (MTL).}

In \cite{wang2016distributed}, the authors propose a distributed MTL. Every node trains its own model, but joint (non-separable) regularizers are enforced in a per-coordinate fashion.
\begin{align*}
	\min_{x_1, \ldots, x_n} \sumin f_i(x_i) + \lambda \sum_{j=1}^d r(x^{(j)}),
\end{align*}
where $x^{(j)}$ is a vector in $\R^n$ that consists of the $j$-th coordinates of $x_1, \ldots, x_n$. The regularizer is chosen as $r(x) = \norm{x}_2$ or $r(x) = \norm{x}_\infty$. This problem enables rewriting in a coupled constraints form. Introduce matrices $Q_{ij}$ of size $n\times d$ that have all zero entries but one unit entry at position $(i, j)$. Multiplication $y = Q_{ij} x$ returns a vector $y\in\R^n$ with $i$-th entry equal to the $j$-th entry of $x$ and all other components equal to zero.
\begin{align*}
	\min_{\substack{x_1, \ldots, x_n\\y_1, \ldots y_d}}~ &\sumin f_i(x_i) + \lambda \sum_{j=1}^d r(y_j) \\
	\text{s.t. } &y_j = \sumin Q_{ij} x_i.
\end{align*}
Here $y_j = x^{(j)}$ are vectors that collect $j$-th components of locally held $x_i$.

\textbf{Federated Self-Supervised Learning (SSL).}

In distributed SSL a group of agents seek to learn representations of the locally held data while synchronizing with others if the corresponding data has common features or samples. The parties share a common representation alignment dataset to align their representations \cite{makhija2022federated}. The corresponding problem can be formulated as optimization with consensus constraints. Let each node hold a model with weights $x_i\in\R^{d_i}$ (local models possibly have different architectures). For simplicity we assume that models are linear and their loss functions write as $\norm{C_i x_i - c_i}^2$. The consensus is enforced on inner representations, which we assume to have form $A_i x_i$. The problem of federated SSL writes as
\begin{align*}
    \min_{\substack{x_1, \ldots, x_n\\z_1, \ldots, z_n}}~ &\sumin \norm{C_i x_i - c_i}^2 + \frac\mu2 \norm{A_i x_i - z_i}^2 \\
    \text{s.t. } &z_1 = \ldots = z_n.
\end{align*}
We see that this problem comes down to consensus optimization.

\newpage

\newpage

\section{Chebyshev Iteration} \label{app:Chebyshev_iteration}

\begin{algorithm}[H]
\caption{Chebyshev$(v, B, b)$ \cite{gutknecht2002chebyshev}}
\label{alg:chebyshev}
\begin{algorithmic}[1]
\STATE \textbf{Input:} $v, B, b$
\STATE $n \gets \left\lceil \sqrt{\frac{\smax^2(B)}{\sminp^2(B)}} \right\rceil$
\STATE $\rho \gets \left(\smax^2(B) - \sminp^2(B)\right)^2 / 16$, \quad $\nu \gets \left(\smax^2(B) + \sminp^2(B)\right)/2$
\STATE $\delta^0 \gets -\nu/2$
\STATE $p^0 \gets - B^{\top}(B v - b)/\nu$
\STATE $v^1 \gets v + p^0$
\FOR{$i = 1, \ldots, n-1$}
    \STATE $\beta^{i-1} \gets \rho / \delta^{i-1}$
    \STATE $\delta^i \gets -(\nu + \beta^{i-1})$
    \STATE $p^i \gets \big(B^{\top}(B v^i - b) + \beta^{i-1} p^{i-1}\big)/\delta^i$
    \STATE $v^{i+1} \gets v^i + p^i$
\ENDFOR
\STATE \textbf{Output:} $v^n$
\end{algorithmic}
\end{algorithm}

\newpage

\section{Proof of \Cref{thm:smooth_objectives_apapc_convergence}}\label{app:smooth_objectives_apapc}

\subsection{Strongly convex case ($\mu > 0$)}

To prove this case, we just recall a theorem on APAPC convergence from \cite{salim2022optimal}.
\begin{theorem}\label{thm:apapc}(\cite{salim2022optimal}). 
	Let Assumptions \ref{assum:strongly_convex} and \ref{assum:smooth} hold. There exists a set of parameters for \Cref{alg:apapc} such that to yield $u^N$ satisfying $\sqn{u^N - u^*}\leq \eps$ it requires $N = O(\kappa_B\sqrt{L/\mu} \log(1/\eps))$ iterations.
\end{theorem}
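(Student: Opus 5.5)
This statement is quoted from \cite{salim2022optimal}, so the plan is to recover it from their Lyapunov analysis of \textsc{APAPC}. I would not develop anything new beyond checking that the constants line up. The starting point is the saddle-point view of \eqref{prob:affine_constraints}. A pair $(u^*,z^*)$ is optimal iff $\nabla G(u^*) + z^* = 0$ with $z^*\in\operatorname{range}(B^\top)$ and $Bu^* = b$. The iterate $z^k$ stays in $\operatorname{range}(B^\top)$ because $z^0=0$ and every update adds $B^\top(\cdot)$, so a unique $z^*$ in that subspace exists.

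The core step is a one-iteration contraction for the potential
\[
\Psi^k = \tfrac{1}{\eta}\|u^k-u^*\|^2 + \tfrac{2(1-\tau)}{\tau}\big(\mathrm{D}_G(u_f^k,u^*)\big) + \tfrac{1}{\theta}\|z^k-z^*\|^2_{(I-\eta\theta BB^\top)^{-1}},
\]
where $\mathrm{D}_G(u_f,u^*)=G(u_f)-G(u^*)-\langle\nabla G(u^*),u_f-u^*\rangle$ is the Bregman divergence. The $z$-norm is taken on $\operatorname{range}(B^\top)$.

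The $u$-update with $\alpha=\mu$ is a forward step on $G-\tfrac\mu2\|\cdot\|^2$ followed by a proximal step on $\tfrac\mu2\|\cdot\|^2$. Lines \ref{alg:apapc:line:x:1}–\ref{alg:apapc:line:x:4} form the Nesterov-type coupling. Together these give the usual accelerated contraction factor $(1+\tau/2)^{-1}$ on the primal part, with $\tau \asymp \sqrt{\mu/L}$ and $\eta \asymp 1/(\tau L)$. The predictor–corrector pair (lines \ref{alg:apapc:line:x:2}–\ref{alg:apapc:line:x:3}) plays the role of PAPC/Condat–Vũ for the dual variable.

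Choosing $\theta = 1/(\eta\,\smax^2(B))$ keeps $I-\eta\theta BB^\top\succeq 0$. Because $z^k-z^*$ lies in $\operatorname{range}(B^\top)$, the dual part contracts by the factor $1-\eta\theta\,\sminp^2(B)=1-1/\kappa_B$.

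Combining the two gives $\Psi^{k+1}\le \max\{(1+\tau/2)^{-1},\,1-1/\kappa_B\}\,\Psi^k$. Up to constants this is $\big(1-c/(\kappa_B\sqrt{L/\mu})\big)\Psi^k$. Unrolling yields $\|u^N-u^*\|^2\le \eta\Psi^N\le\eps$ after $N=O(\kappa_B\sqrt{L/\mu}\log(1/\eps))$ iterations.

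The main obstacle is the cross term $\langle z^{k+1}-z^*, u^{k+1/2}-u^{k+1}\rangle$ coupling the primal and dual steps. It must be absorbed using the weighted norm $(I-\eta\theta BB^\top)^{-1}$, which is exactly the PAPC trick. This is where $\theta\le 1/(\eta\smax^2(B))$ is forced. Since this is verbatim \citep[Proposition 1]{salim2022optimal}, I would invoke it there and only state the parameter choice.
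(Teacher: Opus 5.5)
Your bottom line, invoking Proposition~1 of Salim et al., is exactly what the paper does, and it suffices. With their parameters it gives $O((\sqrt{\kappa_B L/\mu}+\kappa_B)\log(1/\eps))$ iterations. This is $O(\kappa_B\sqrt{L/\mu}\log(1/\eps))$ because $\kappa_B\ge 1$ and $L/\mu\ge 1$. The Lyapunov sketch you offer in support, however, is wrong at its central step and should not be presented as the mechanism.

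\textbf{The dual contraction.} Your claim that the dual part contracts by $1-\eta\theta\sminp^2(B)=1-1/\kappa_B$ independently of $\tau$ is false. There is no strong convexity in $z$, so the decrease of $\|z^k-z^*\|$ must be extracted from the $u^{k+1}$-update. That update expresses $\eta(z^{k+1}-z^*)$ through primal differences and $\nabla G(u_g^k)-\nabla G(u^*)$. The gradient part is then paid for with the negative Bregman term $D_G(u_g^k,u^*)\ge\frac{1}{2L}\|\nabla G(u_g^k)-\nabla G(u^*)\|^2$. This costs a factor of order $\min\{1,1/(\eta L)\}$, so with $\eta\asymp 1/(\tau L)$ the dual rate is only $1-\Theta(\tau/\kappa_B)$. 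This is also why Salim et al.\ take $\tau$ of order $\sqrt{\kappa_B\mu/L}$ (capped at $1$) rather than $\sqrt{\mu/L}$.

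\textbf{Your one-step inequality cannot hold.} With $\tau\asymp\sqrt{\mu/L}$, the inequality $\Psi^{k+1}\le\max\{(1+\tau/2)^{-1},1-1/\kappa_B\}\Psi^k$ would give $O((\sqrt{L/\mu}+\kappa_B)\log(1/\eps))$ iterations. Each iteration uses one product by $B$ and one by $B^\top$. That count contradicts the $\Omega(\sqrt{\kappa_B L/\mu}\log(1/\eps))$ lower bound on matrix multiplications; take for instance $\kappa_B=\sqrt{L/\mu}\to\infty$. You reach the stated bound only because you then weakened the max to $1-c\sqrt{\mu/L}/\kappa_B$. With your parameters that happens to be the true order, but for a different reason than the one you give.

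\textbf{The potential is ill-defined as written.}
\begin{itemize}
\item Since $z\in\R^d$, the weight must be built from $B^\top B$, not $BB^\top$.
\item With $\theta=1/(\eta\smax^2(B))$, the matrix $I-\eta\theta B^\top B$ is singular on the top right singular vector of $B$.
\item That vector lies in $\operatorname{range}(B^\top)$, which is exactly where $z^k-z^*$ lives, so the inverse of this matrix cannot serve as the metric.
\end{itemize}
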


\subsection{Convex case ($\mu = 0$)}

Regularization can be used to adapt methods designed for strongly convex objectives to non-strongly convex problems. Let us define the regularized function as follows:
\begin{equation} \label{eq:reg_func}
	G^{\nu}(u) = G(u) + \frac{\nu}{2} \|u^0 - u\|^2.
\end{equation}
The following lemma describes the accuracy needed for solution of the regularized problem.

\begin{lemma}[\textbf{new}] \label[lemma]{lem:regularized_convergence}
	Let $G: \mathbb{R}^d \to \mathbb{R}$ be convex and $L$-smooth function and suppose that there exists solution $u^*\in \Argmin\limits_{u: B u = b} G(u)$ and $u_{\nu}^* =  \argmin\limits_{u: B u = b} G^{\nu}(u)$. Define $D = G(u^*) - \min\limits_{u} G(u)$. Assume that $\|u^0 - u^*\|^2 \leq R^2$. Recall $\nu = \eps/R^2$ from \Cref{thm:smooth_objectives_apapc_convergence} and set 
	\begin{align}\label{eq:regularized_convergence_def_delta}
		\delta =  \frac{\varepsilon^2}{32\left(D + \frac{\varepsilon}{2}\right) \left(L+\frac{\varepsilon}{R^2}\right)}.
	\end{align}
	If we have $\norm{u - u_\nu^*}^2\leq \delta$, then
	\begin{equation}
		G(u) - G(u^*) \leq \varepsilon,~ \norm{Bu - b}^2\leq \delta\smax^2(B).
	\end{equation}
	In other words, it is sufficient to solve the regularized problem with accuracy $\delta = O(\eps^2)$ in terms of convergence in argument to get an $\eps$-solution of the initial problem.
\end{lemma}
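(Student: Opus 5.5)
The proof splits into two independent claims: the constraint residual bound, and the objective gap bound.

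\textbf{Constraint residual.} This part is immediate. Since $u_\nu^*$ is feasible, $Bu_\nu^* = b$, and therefore
\[
\norm{Bu-b} = \norm{B(u-u_\nu^*)} \le \smax(B)\norm{u-u_\nu^*}.
\]
Squaring and using $\norm{u-u_\nu^*}^2\le\delta$ gives $\norm{Bu-b}^2\le\delta\smax^2(B)$.

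\textbf{Objective gap.} I would split it as
\[
G(u)-G(u^*) = \bigl(G(u)-G(u_\nu^*)\bigr) + \bigl(G(u_\nu^*)-G(u^*)\bigr)
\]
and bound each term by $\eps/2$.

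\emph{Second term.} By optimality of $u_\nu^*$ for $G^\nu$ on the feasible set, $G^\nu(u_\nu^*)\le G^\nu(u^*)$. This gives
\[
G(u_\nu^*) - G(u^*) \le \frac\nu2\norm{u^0-u^*}^2 - \frac\nu2\norm{u^0-u_\nu^*}^2 \le \frac{\nu R^2}{2} = \frac\eps2 .
\]

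\emph{First term.} Here $u$ need not be feasible, so I cannot use constrained optimality of $u_\nu^*$. Instead I would use smoothness of $G$ directly:
\[
G(u)-G(u_\nu^*) \le \angles{\nabla G(u_\nu^*), u-u_\nu^*} + \frac L2\norm{u-u_\nu^*}^2 \le \norm{\nabla G(u_\nu^*)}\sqrt\delta + \frac L2\delta .
\]
The main obstacle is bounding $\norm{\nabla G(u_\nu^*)}$ in terms of $D$. I plan to use the standard smooth inequality $\norm{\nabla h(v)}^2\le 2L_h\bigl(h(v)-\min h\bigr)$, applied to the unconstrained function $G^\nu$, which is $(L+\nu)$-smooth with $\nu=\eps/R^2$. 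This matches the factor $L+\eps/R^2$ appearing in $\delta$, which suggests it is the intended route. The steps are:
\begin{enumerate}
\item Bound the value of $G^\nu$ at $u_\nu^*$ using the second-term estimate:
\[
G^\nu(u_\nu^*) \le G^\nu(u^*) \le G(u^*) + \frac\eps2 .
\]
\item Bound the unconstrained minimum from below: $\min_v G^\nu(v) \ge \min_v G(v)$.
\item Combine: $G^\nu(u_\nu^*) - \min G^\nu \le D + \eps/2$.
\item Conclude $\norm{\nabla G^\nu(u_\nu^*)}^2 \le 2(L+\eps/R^2)(D+\eps/2)$.
\end{enumerate}

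\emph{Converting back to $\nabla G$.} We have $\nabla G(u_\nu^*) = \nabla G^\nu(u_\nu^*) + \nu(u_\nu^*-u^0)$, so the extra term $\nu\norm{u_\nu^*-u^0}$ must also be controlled. I would avoid it in one of two ways:
\begin{itemize}
\item Apply the smoothness expansion to $G^\nu$ instead of $G$. Since $G\le G^\nu$ and $G^\nu(u_\nu^*)\le G(u_\nu^*)+\eps/2$, the expansion of $G^\nu$ yields $G(u)-G(u^*)\le$ the same two gradient and curvature terms plus $\eps$-sized slack. The constants can then be rebalanced.
\item Alternatively, show $\norm{u_\nu^*-u^0}\le R$. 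This follows from the inequality in the second-term argument, since $G(u_\nu^*)\ge G(u^*)$ by feasibility of $u_\nu^*$.
\end{itemize}

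\textbf{Final check.} It remains to plug in the value of $\delta$:
\[
\sqrt{\delta}\cdot\sqrt{2(L+\nu)(D+\eps/2)} = \frac\eps4, \qquad \frac{(L+\nu)\delta}{2} \le \frac{\eps}{64}\cdot\frac{\eps}{D+\eps/2} \le \frac{\eps}{32}.
\]
So the first term is at most $\eps/2$ and the total gap is at most $\eps$. The constant $32$ in $\delta$ is exactly what makes this arithmetic close.
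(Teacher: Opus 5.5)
Your ingredients are the paper's: the $(L+\nu)$-smoothness expansion of $G^\nu$ at $u_\nu^*$, Cauchy--Schwarz, $\norm{\nabla G^\nu(u_\nu^*)}^2\le 2(L+\nu)\bigl(G^\nu(u_\nu^*)-\min G^\nu\bigr)$, and then $G^\nu(u_\nu^*)\le G^\nu(u^*)$ and $\min G^\nu\ge\min G$. The residual bound is also fine. The gap is in the final accounting.

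The two terms in your final check, $\sqrt{2(L+\nu)(D+\eps/2)\delta}$ and $\frac{(L+\nu)\delta}{2}$, together bound $G^\nu(u)-G^\nu(u_\nu^*)$. They do not bound your first term $G(u)-G(u_\nu^*)$, and you never pay for the conversion between the two.
\begin{itemize}
\item Under your first option the conversion costs $\frac\nu2\norm{u^0-u_\nu^*}^2$, which is only bounded by $\eps/2$. So the total you actually derive is $\eps/4+\eps/32+\eps/2+\eps/2>\eps$. ``Rebalancing the constants'' is not available, because $\delta$ is prescribed by the statement.
\item Under your second option the extra term $\nu\norm{u_\nu^*-u^0}\sqrt\delta$, which you bound by $\nu R\sqrt\delta$, is missing from the final check. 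This bound can approach $\eps/4$ (take $D=0$ and $LR^2\ll\eps$). So the separate estimates $\eps/4$ and $\eps/32$ do not show that the first term is at most $\eps/2$.
\end{itemize}
The second route can be rescued. With $D=0$, the worst case, one gets $\nu R\sqrt\delta+\frac L2\delta=\frac{\eps}{32}(8s+1-s^2)\le\frac\eps4$ for $s=\sqrt{\nu/(L+\nu)}\in(0,1)$. But that is an additional and tight argument you did not give.

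The clean fix is already in your second-term estimate: do not discard $-\frac\nu2\norm{u^0-u_\nu^*}^2$. Since $G(u)\le G^\nu(u)$ and $G(u_\nu^*)=G^\nu(u_\nu^*)-\frac\nu2\norm{u^0-u_\nu^*}^2$, you have
\[
G(u)-G(u_\nu^*)\le G^\nu(u)-G^\nu(u_\nu^*)+\frac\nu2\norm{u^0-u_\nu^*}^2 .
\]
Adding your second-term bound, the $\norm{u^0-u_\nu^*}^2$ terms cancel exactly:
\[
G(u)-G(u^*)\le G^\nu(u)-G^\nu(u_\nu^*)+\frac\nu2\norm{u^0-u^*}^2 .
\]
This is precisely the first line of the paper's proof. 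The paper obtains it in one step from $G\le G^\nu$ and $G^\nu(u_\nu^*)\le G^\nu(u^*)$, without splitting through $G(u_\nu^*)$. From there your Steps 1--4 and the arithmetic $\eps/4+\eps/32+\eps/2<\eps$ go through as written. You then need no bound on $\nabla G(u_\nu^*)$ or on $\norm{u_\nu^*-u^0}$.
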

\begin{proof}
    We have
\begin{align*}
	G(u) - G(u^*) &\aleq{follows the definition of the regularized function $G^\nu$}  G^{\nu}(u) - G^{\nu}(u_{\nu}^*) + \frac{\nu}{2} \|u^0-u^*\|^2 \\
	&\aleq{uses the $(L+\nu)$-smoothness of $G^\nu$}\langle \nabla G^{\nu}(u_\nu^*), u - u_\nu^* \rangle + \frac{L+\nu}{2} \|u-u_\nu^*\|^2 + \frac{\nu}{2} \|u^0-u^*\|^2 \\
	&\aleq{uses the Cauchy–Schwarz inequality} \| \nabla G^{\nu} (u^*_{\nu}) \| \cdot \|u - u_\nu^*\| + \frac{L+\nu}{2} \|u-u_\nu^*\|^2 + \frac{\nu}{2} \|u^0-u^*\|^2 \\
	&\aleq{uses the convexity and smoothness of $G^\nu$}\sqrt{ 2(L+\nu) \left(G^{\nu}(u^*_\nu) - \min_{u} {G^{\nu}(u)}\right) \|u - u_\nu^*\|^2}  + \frac{L+\nu}{2} \|u-u_\nu^*\|^2 + \frac{\nu}{2} \|u^0-u^*\|^2 \\
	&\aleq{uses the fact that $G(u) \leq G^\nu(u)$ for all $u$}\sqrt{ 2(L+\nu) \left(G^{\nu}(u^*) - \min_{u} {G(u)}\right) \|u - u_\nu^*\|^2} + \frac{L+\nu}{2} \|u-u_\nu^*\|^2 + \frac{\nu}{2} \|u^0-u^*\|^2 \\
	&\aeq{uses the definition of $G^\nu$}\sqrt{ 2(L+\nu) \left(G(u^*) - \min_{u} G(u) + \frac{\nu}{2} \|u^0-u^*\|^2\right) \|u - u_\nu^*\|^2} + \frac{L+\nu}{2} \|u-u_\nu^*\|^2 + \frac{\nu}{2} \|u^0-u^*\|^2 \\
	&\aeq{uses the definitions of $R$, $D$ and the assumption that $\|u - u^*_\nu\| \leq \delta$}\sqrt{ 2 (L+\nu) \left(D + \frac{\nu R^2}{2}\right)\delta} + \frac{(L+\nu)\delta}{2} + \frac{\nu R^2}{2} \\
	&\aleq{is due to the definition of $\nu$}\sqrt{ 2 (L+\nu) \left(D + \frac{\varepsilon}{2}\right)\delta} + \frac{(L+\nu)\delta}{2} + \frac{\nu R^2}{2} \\
	&\aleq{uses the definition of $\delta$ in \eqref{eq:regularized_convergence_def_delta}} \frac{\varepsilon}{64} + \frac{\varepsilon}{4} + \frac{\varepsilon}{2} \\
	&< \varepsilon.
\end{align*}
where \annotate.

Moreover,
\begin{align*}
	\|Bu - b\|^2 & = \|B u - B u^*_\nu\|^2 \leq \smax^2(B) \cdot \|u - u^*_\nu\|^2 \leq \sigma_{\max}^2(B) \cdot \delta = O(\smax^2(B)\varepsilon^2).
\end{align*}
\end{proof}

From Proposition 1 in \cite{salim2022optimal} and Lemma \ref{lem:regularized_convergence}, to achieve $G(u^k) -G(u^*) \leq \varepsilon$, APAPC (Algorithm \ref{alg:apapc}) requires $\mathcal{O}\left(\kappa_B\sqrt{\frac{L+\nu}{\nu}} \log\left(\frac{1}{\delta}\right)\right)$ iterations.

From the definitions of $\delta$ in \eqref{eq:regularized_convergence_def_delta} and $\nu$ in Lemma~\ref{lem:regularized_convergence}, we have:
\begin{align*}
	\frac{1}{\delta} = \mathcal{O}\left(\frac{\left(M + \frac{\varepsilon}{2}\right) \left(L+\frac{\varepsilon}{R^2}\right)}{\varepsilon^2}\right),~~
	\frac{L+\nu}{\nu} = 1 + \frac{L}{\nu} = 1 + \frac{LR^2}{\varepsilon} = \mathcal{O}\left(\frac{LR^2}{\varepsilon}\right)
\end{align*}
Hence, the resulting complexity is $\mathcal{O}\left(\kappa_B\sqrt{\frac{LR^2}{\varepsilon}} \log \left( \frac{\left(M + \frac{\varepsilon}{2}\right) \left(L+\frac{\varepsilon}{R^2}\right)}{\varepsilon^2} \right)\right)$ iterations.

\newpage
\section{Discussion on Gradient Sliding Method}  \label[app]{app:gradient_sliding_discuss}

\subsection{Preliminary: Gradient Sliding}
Let us discuss the deterministic gradient sliding (GS) method \citep[Algorithm 8.1]{lan2020first};   
\cite{lan2020communication} that is used for optimization problems consisting of two summands to split the complexities. Consider problem
\begin{align}\label{prob:sliding}
	\min_{u \in U} K(u) := K_s(u) + K_n(u).
\end{align}

\begin{theorem}[\citet{lan2020first}]\label{thm:sliding}
	Let $K_s$ satisfy Assumption~\ref{assum:strongly_convex} with strong convexity parameter $\mu = 0$ and Assumption~\ref{assum:smooth}, $K_n(u)$ satisfy Assumption~\ref{assum:strongly_convex} with $\mu = 0$ and Assumption \ref{assum:bounded_gradient} and $\norm{u^0 - u^*}^2\leq R^2$. Gradient sliding algorithm applied to problem \eqref{prob:sliding} requires $N_s = O\cbr{\sqrt{\frac{LR^2}{\eps}}}$ calls to gradient of $K_s$ and 
	$N_n = O\cbr{\frac{M^2R^2}{\eps^2} + N_s}$
	calls to subgradient of $K_n$ to yield $\hat u$ such that $K(\hat u) - K^*\leq \eps$.
\end{theorem}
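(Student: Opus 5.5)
This theorem is a cited result from \citet{lan2020first}, so the plan is to reconstruct the standard gradient-sliding analysis rather than derive anything new. The key structural idea is that the outer loop of \Cref{alg:sliding} is an accelerated (Nesterov-type) scheme driven by gradients of the smooth part $K_s$. The inner loop then approximately solves a proximal subproblem
\[
\min_{w\in U}\ \langle \nabla K_s(\underline u^k), w\rangle + K_n(w) + \tfrac{\beta_k}{2}\|u^{k-1}-w\|^2,
\]
using only subgradients of $K_n$. This is exactly what the quadratic model $\Phi$ with the linearized $K_n$ term does at each inner step.

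The first step is an outer-loop recursion. I would take $\gamma_k = 2/(k+1)$, $\beta_k = \Theta(L/k)$ and an averaged output $\overline u^k$. Using $L$-smoothness of $K_s$ and convexity of $K_n$, the standard accelerated estimate-sequence argument gives
\[
K(\overline u^N) - K^* \le O\!\left(\frac{LR^2}{N^2}\right) + \sum_k \Gamma$-weighted inexactness errors $\Delta_k$,
\]
where $\Delta_k$ measures how far $\tilde u^k$ is from the exact prox point of the subproblem, in the sense of a gap of the subproblem's objective.

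The second step is an inner-loop bound. Each inner loop is a subgradient (mirror-descent-like) method applied to a $\beta_k$-strongly convex function whose nonsmooth part has subgradients bounded by $M$. With stepsizes $\eta_t = \Theta(t)$ and weights $\theta_t = \Theta(t)$, the weighted-average iterate $\tilde u^k$ has gap $O\bigl(M^2/(\beta_k T_k)\bigr)$ relative to the reference point, plus a telescoping term $\beta_k(\|u^{k-1}-u\|^2 - \|u^k-u\|^2)$. This telescoping term is what gets absorbed into the outer recursion.

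The third step is the parameter choice. I would set $N = \Theta(\sqrt{LR^2/\varepsilon})$ so that the first term is at most $\varepsilon/2$. Then I would choose $T_k = \Theta\bigl(M^2 N k^2 / (L^2 R^2)\bigr)$, so that the accumulated inner errors sum to $O(\varepsilon)$. This yields $N_s = N$ gradient calls, and
\[
N_n = \sum_k T_k = O\!\left(\frac{M^2 N^3}{L^2R^2} + N\right) = O\!\left(\frac{M^2R^2}{\varepsilon^2} + N_s\right)
\]
subgradient calls. The extra $+N$ accounts for the rounding $T_k \ge 1$.

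The main obstacle is the coupling between the inner and outer analyses. The inner error must be expressed so that the telescoping distance terms match the outer Lyapunov function, and $\beta_k$ must not be allowed to blow up. If the direct coupling proves messy, I would first bound the per-outer-step error with an arbitrary reference point $u$ and then specialize to $u = u^*$ only at the end.
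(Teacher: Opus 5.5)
Your outline is the standard gradient-sliding argument of \citet{lan2020first}: an accelerated outer recursion with $\gamma_k = 2/(k+1)$ and $\beta_k = \Theta(L/k)$, an inner strongly convex subgradient loop with gap $O(M^2/(\beta_k T_k))$ plus telescoping distance terms, and $T_k = \Theta(M^2 N k^2/(L^2R^2))$; the paper states this theorem by citation without its own proof, so this is exactly the argument it relies on. The one slip is arithmetic: $\sum_{k=1}^N T_k = \Theta\bigl(M^2 N^4/(L^2R^2)\bigr) + O(N)$, not $N^3$, and it is this $N^4$ together with $N^2 = \Theta(LR^2/\varepsilon)$ that gives exactly $M^2R^2/\varepsilon^2$ (also, in the parametrization of \Cref{alg:sliding}, $\theta_t$ is a convex-combination coefficient of order $1/t$, so the averaging weights $\propto t$ are only implicit).
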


By the method of Lagrange
multipliers, problem \eqref{prob:affine_constraints} can be equivalently written as the following saddle point problem:
\begin{equation} \label{eq:saddle_point_problem}
	\begin{aligned}
		\min_{u \in U} \; \max_{v \in\R^p} \;\sbraces{G(u) +  \langle v, B u -b \rangle} 
	\end{aligned}
\end{equation}

\begin{lemma}[\citet{lan2020communication}]  \label[lemma]{lem:boundness_of_penal_prob_dual}
    Let $u^*$ be an optimal solution of \eqref{prob:affine_constraints}. Then there exists an optimal dual multiplier $v^*$ for \eqref{eq:saddle_point_problem} such that
	\begin{equation} \label{eq:optimal_dual_multipler_bound}
		\norm{v^*} \leq R_{\operatorname{dual}} \eqdef \frac{M}{\sminp(B)}.
	\end{equation}
\end{lemma}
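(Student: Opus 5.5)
We want to show that problem \eqref{prob:affine_constraints} admits a multiplier $v^*$ with $\norm{v^*}\le M/\sminp(B)$, where $M$ bounds the subgradients of $G$ (Assumption~\ref{assum:bounded_gradient}). The plan is to read the multiplier off the KKT conditions and then pick the smallest admissible one in norm.

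\textbf{Step 1: existence of a multiplier.} The constraints are affine and $b\in\operatorname{range}(B)$, so the linearity constraint qualification applies. If $u^*$ is interior to $\cU$ (for instance $\cU=\R^d$), convex duality gives $0\in \partial G(u^*) + B^\top v$ for some $v\in\R^s$. Concretely, there is $g\in\partial G(u^*)$ with $g = -B^\top v$. If $\cU$ is a proper set, we work with $G$ restricted to the relevant affine hull, or else assume the multiplier relation holds as in \citet{lan2020communication}.

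\textbf{Step 2: choosing the minimal-norm multiplier.} The set of multipliers $v$ satisfying $B^\top v=-g$ is an affine set. It is invariant under adding elements of $\ker B^\top$. We therefore project onto $(\ker B^\top)^\perp=\operatorname{range}(B)$. The resulting $v^*\in\operatorname{range}(B)$ still satisfies $B^\top v^* = -g$, so $(u^*,v^*)$ remains a saddle point of \eqref{eq:saddle_point_problem}. The reason is that the Lagrangian changes only through $\langle v, Bu-b\rangle$. For $w\in\ker B^\top$ we have $\langle w,Bu-b\rangle = \langle B^\top w,u\rangle - \langle w,b\rangle = 0$, because $b\in\operatorname{range}(B)\perp\ker B^\top$.

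\textbf{Step 3: the norm bound.} For $v^*\in\operatorname{range}(B)=\ker^\perp(B^\top)$ we have $\norm{B^\top v^*}\ge \sminp(B^\top)\norm{v^*}=\sminp(B)\norm{v^*}$. This uses the same characterization of $\sminp$ as in Remark~\ref{rem:projected_condition_number}, and the fact that $B$ and $B^\top$ share their nonzero singular values. Combined with $\norm{B^\top v^*}=\norm{g}\le M$, this gives $\norm{v^*}\le M/\sminp(B)=R_{\operatorname{dual}}$.

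\textbf{Main obstacle.} The only delicate point is Step 1 when $\cU$ is a general convex set. There the optimality condition carries a normal-cone term $N_\cU(u^*)$, and the multiplier need not be expressible through a subgradient of norm at most $M$. We would first try to treat $\cU=\R^d$, or $u^*\in\operatorname{int}\cU$, directly. Otherwise we would absorb the indicator of $\cU$ into $G$ and argue that Assumption~\ref{assum:bounded_gradient} is meant to apply to the subgradient realized in the KKT condition, following the setting of \citet{lan2020communication}.
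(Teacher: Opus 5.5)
Your argument is correct and is the one the paper imports from \citet{lan2020communication}. You take a KKT multiplier and project it onto $\operatorname{range}(B)=\ker^\perp(B^\top)$, which changes neither $B^\top v$ nor the Lagrangian since $b\in\operatorname{range}(B)$. You then conclude from $\sminp(B)\norm{v^*}\le\norm{B^\top v^*}=\norm{g}\le M$. As written, this is a complete proof when $\cU=\R^d$ or $u^*\in\operatorname{int}\cU$.

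The obstacle you flag for general $\cU$ is genuine. Neither fallback in your Step~1 can remove it, because the statement itself fails there.

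\textbf{Counterexample.} Take $G(u)=-Mu_1$ on $\R^2$, $B=\pmat{0 & 1}$ and $b=0$, so that $\sminp(B)=1$. Let $\cU=\{u:\ u_1\le c\,u_2\}$ with $|c|>1$, intersected with a box if you want $\cU$ bounded. Then $u^*=0$ and $N_\cU(0)=\{t(1,-c):\ t\ge 0\}$. The condition $0\in\nabla G(0)+N_\cU(0)+B^\top v$ forces $t=M$ and $v=Mc$. This is the unique optimal multiplier of \eqref{eq:saddle_point_problem}, and $\norm{v^*}=M|c|>M=R_{\operatorname{dual}}$.

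\textbf{Why your fallbacks fail.} Absorbing $\iota_\cU$ into $G$ makes the subdifferential unbounded at boundary points, so Assumption~\ref{assum:bounded_gradient} is lost. Postulating that the subgradient realized in the KKT relation has norm at most $M$ is assuming the conclusion. The right fix is to put $\cU=\R^d$ (or inactivity of the set constraint at $u^*$) into the hypotheses, rather than appealing to the cited source.

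\textbf{Why the consensus case survives.} Suppose $B$ encodes consensus, $\cU=X^n$ and $G(u)=\sum_i f_i(u_i)$. Then $u^*=\one\otimes\bar x$, and $0=\sum_i g_i+\nu$ for some $g_i\in\partial f_i(\bar x)$ and $\nu\in N_X(\bar x)$. Assigning $\nu/n\in N_X(\bar x)$ to every node gives the vector $(g_i-\frac1n\sum_j g_j)_i$. This is the projection of $(g_1,\ldots,g_n)\in\partial G(u^*)$ onto the zero-sum subspace $\operatorname{range}(B^\top)$, hence has norm at most $M$. This averaging step has no analogue for general $B$. So the paper's remark that the cited proof carries over ``without changes'' is too strong once $\cU\neq\R^d$.
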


\begin{lemma}[\citet{gorbunov2019optimal}] \label[lemma]{lem:penalization}
	Let $\varepsilon > 0$ and $r = \frac{2 R^2_{\operatorname{dual}}}{\varepsilon}$. If $\hat{u}$ is an $\varepsilon$-solution of \eqref{eq:affine_constraints_penal_prob}, i.e.
	\begin{equation*}
		H_{r} (\hat u) - \min_{u \in U}\; H_{r}(u) \leq \varepsilon,
	\end{equation*}
	then
	\begin{equation*}
		G(\hat u) - \min_{B u - b = 0} G(u) \leq \varepsilon, \quad \|B\hat{u} - b\| \leq \frac{2\varepsilon}{R_{\operatorname{dual}}}.
	\end{equation*}
\end{lemma}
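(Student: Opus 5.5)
The statement to prove is \Cref{lem:penalization}: an $\varepsilon$-minimizer of the penalized function $H_r$ is an $\varepsilon$-solution of \eqref{prob:affine_constraints} with small residual. I would argue through the saddle-point formulation \eqref{eq:saddle_point_problem}, using the bounded dual multiplier from \Cref{lem:boundness_of_penal_prob_dual}.

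Let $G^* = \min_{Bu=b} G(u) = G(u^*)$. The objective bound is immediate. Since $H_r(u) = G(u)$ on the feasible set, we have $\min_{u\in U} H_r(u) \le H_r(u^*) = G^*$. Because the penalty term is nonnegative,
\[
G(\hat u) \le H_r(\hat u) \le \min_{u \in U} H_r(u) + \varepsilon \le G^* + \varepsilon .
\]

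For the residual I need a matching lower bound on $G(\hat u)$. Take $v^*$ as in \Cref{lem:boundness_of_penal_prob_dual}, so $\|v^*\|\le R_{\operatorname{dual}}$. By the saddle-point property, for every $u\in U$,
\[
G(u) + \langle v^*, Bu-b\rangle \ge G^* ,
\]
and hence, by Cauchy--Schwarz, $G(\hat u) \ge G^* - R_{\operatorname{dual}}\,\|B\hat u - b\|$.

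Now set $t = \|B\hat u - b\|$ and write $H_r(\hat u) = G(\hat u) + \tfrac r2 t^2 \le G^* + \varepsilon$. Combining this with the lower bound gives
\[
\tfrac r2 t^2 - R_{\operatorname{dual}}\, t - \varepsilon \le 0 .
\]
Substituting $r = 2R_{\operatorname{dual}}^2/\varepsilon$ yields
\[
\frac{R_{\operatorname{dual}}^2}{\varepsilon}\, t^2 - R_{\operatorname{dual}}\, t - \varepsilon \le 0 .
\]
This quadratic in $t$ has positive root $t = \tfrac{1+\sqrt5}{2}\,\varepsilon / R_{\operatorname{dual}}$, so $t \le \tfrac{1+\sqrt5}{2}\,\varepsilon/R_{\operatorname{dual}} \le 2\varepsilon/R_{\operatorname{dual}}$.

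The only delicate point is justifying the saddle-point inequality for all $u\in U$ when $U$ is a general convex set rather than $\R^d$. This needs $v^*$ to be a genuine Lagrange multiplier for the constrained problem over $U$, meaning strong duality holds, which for affine constraints follows from a Slater-type relative-interior condition. I would take that directly from \Cref{lem:boundness_of_penal_prob_dual}, which already supplies such a $v^*$. Everything else is the elementary quadratic inequality above.
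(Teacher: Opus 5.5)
Your proof is correct and is essentially the standard argument from the cited source \citet{gorbunov2019optimal}; the paper does not reproduce that argument, remarking only that the consensus-case proof carries over unchanged to general $B$ and $b \in \image B$. The objective bound follows from nonnegativity of the penalty and $\min_{u\in U} H_r(u) \le H_r(u^*) = G(u^*)$, and the residual bound from the saddle-point inequality with $\|v^*\| \le R_{\operatorname{dual}}$ and the resulting quadratic inequality in $\|B\hat u - b\|$, whose root $\frac{1+\sqrt5}{2}\,\varepsilon/R_{\operatorname{dual}}$ is slightly sharper than the stated $2\varepsilon/R_{\operatorname{dual}}$.
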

Although \cite{lan2020communication} and \cite{gorbunov2019optimal} considered only consensus optimization ($B = \bW$, $b = 0$), proofs of \Cref{lem:boundness_of_penal_prob_dual,lem:penalization} in referenced sources work without changes for any $B$ and $b \in \image B$.

\subsection{Proof of \Cref{thm:upper_bound_nonsmooth_affine_constraints}}  \label[app]{app:affine_constraint_nonsmooth}

\begin{proof}
    Denote $K_n(u) = G(u)$ and $K_s(u) = \frac{R^2_{\operatorname{dual}}}\eps\sqn{B u}$, implying 
    $L = \frac{2R^2_{\operatorname{dual}} \smax^2(B)}{\eps}  \leq \frac{2M^2\smax^2(B)}{\eps\sminp^2(B)} = \frac{2M^2}{\eps}\kappa_B$, where the inequality follows from \Cref{lem:boundness_of_penal_prob_dual}. Applying \Cref{thm:sliding} gives the desired result.
    
    When $\mu>0$, we simply substitute $L = \frac{2M^2}{\eps}\kappa_B$ into the complexity of R-Sliding \citep[Theorem 8.3]{lan2020first} $N_s = O\cbraces{\sqrt{\frac{L}{\mu}}\log{\frac{1}{\eps}}}$, $N_n=O(\frac{M^2}{\mu \eps} + N_s)$.
    Note, that for some reason \citep[Section 8.1.3.1]{lan2020first} only considers the case then the smooth component is strongly convex.
    But as it can be seen from the proof of \citep[Theorem 8.3]{lan2020first}, it only uses strong convexity of the sum $K(u)$ of the smooth and the nonsmooth terms, thus we can apply it in our case where $K_n(u)$ is strongly convex, as was also done in \cite{dvinskikh2019decentralized,uribe2020dual}.
\end{proof}

\newpage

\section{Auxiliary Theorems and Lemmas for  Section~\ref{sec:affine_constraints_statements}}

\subsection{Identical Local Constraints}
Let the constraint matrices $C_i$ be equal. Then the block-diagonal matrix of affine constraints has form $\mathbf{C} = I_n \otimes C$. The spectral properties of $\bB^\top = [\bC^\top~~ \gamma\bW]$ can be revisited in comparison with Lemma~\ref{lem:local_spectrum}.

\begin{lemma}[{\citep[Lemma 1]{rogozin2022decentralized_affine}}]\label[lemma]{lem:local_spectrum}
	\begin{align*}
		\smax^2(\bB) &= \smax^2(C) + \gamma^2 \smax^2(W),
		\\
		\sminp^2(\bB) &= \min\br{\sminp^2(C), \gamma^2 \sminp^2(W)}.
	\end{align*}
\end{lemma}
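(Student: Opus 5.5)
The proof will compute the spectrum of $\bB^\top\bB$ exactly by simultaneous diagonalization. Write $\bB^\top\bB = \bC^\top\bC + \gamma^2\bW^\top\bW = I_n\otimes C^\top C + \gamma^2 (W^2\otimes I_d)$, using that $W$ is symmetric. The two summands are Kronecker products with commuting factors, so they commute. Take an orthonormal eigenbasis $u_1,\ldots,u_n$ of $W$ with eigenvalues $0=\lambda_1<\lambda_2\le\cdots\le\lambda_n$. By Assumption~\ref{ass:W}, $\ker W=\mathrm{span}(\mathbf 1)$, so $\lambda_1$ is the only zero eigenvalue and $\lambda_2=\sminp(W)$. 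Take also an orthonormal eigenbasis $v_1,\ldots,v_d$ of $C^\top C$ with eigenvalues $s_k=\sigma_k^2(C)\ge 0$. Then the vectors $u_j\otimes v_k$ form an orthonormal eigenbasis of $\bB^\top\bB$, with eigenvalues
\begin{equation*}
\Lambda_{jk} = s_k + \gamma^2\lambda_j^2 .
\end{equation*}

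\textbf{Largest singular value.} Since $W$ is positive semidefinite, $\smax(W)=\lambda_n$. The largest eigenvalue $\Lambda_{jk}$ is obtained by taking $k$ with $s_k=\smax^2(C)$ and $j=n$. This gives $\smax^2(\bB)=\smax^2(C)+\gamma^2\smax^2(W)$.

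\textbf{Smallest positive singular value.} The positive values among $\Lambda_{jk}$ are exactly those with $(j,k)$ not satisfying both $\lambda_j=0$ and $s_k=0$. I split these into three cases.
\begin{itemize}
\item If $\lambda_j=0$ (so $j=1$) and $s_k>0$, the smallest such value is $\sminp^2(C)$.
\item If $s_k=0$ and $\lambda_j>0$, the smallest such value is $\gamma^2\lambda_2^2=\gamma^2\sminp^2(W)$. This case occurs only if $\ker C\neq\{0\}$.
\item If both $s_k>0$ and $\lambda_j>0$, then $\Lambda_{jk}$ dominates the value from the first case, so it never attains the minimum.
\end{itemize}
Taking the minimum over the cases gives $\sminp^2(\bB)=\min\{\sminp^2(C),\gamma^2\sminp^2(W)\}$.

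\textbf{Main obstacle.} The delicate point is the degenerate case $\ker C=\{0\}$. Then the second case does not occur, and the exact value is $\sminp^2(C)$, which may exceed $\gamma^2\sminp^2(W)$. I will therefore state the equality under the implicit assumption $\ker C\ne\{0\}$. In general, the right-hand side is a valid lower bound on $\sminp^2(\bB)$, and this lower bound is all that the condition-number estimate for $\kappa_\bB$ requires. Choosing $\gamma^2=\sminp^2(C)/\sminp^2(W)$ then gives $\kappa_\bB\le\kappa_C+\kappa_W$, up to the ratio used for $\gamma$.
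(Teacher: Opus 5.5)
Your proof is correct. The paper does not prove this lemma itself; it quotes it from Rogozin et al. Simultaneously diagonalizing $\bB^\top\bB = I_n\otimes C^\top C+\gamma^2(W^2\otimes I_d)$ in the basis $u_j\otimes v_k$ is the natural way to establish it, and your case analysis of the eigenvalues $s_k+\gamma^2\lambda_j^2$ is complete.

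Your caveat points to a real defect in the statement, not in your method. When $\ker C=\{0\}$ the case $s_k=0$ is empty and $\sminp^2(\bB)=\sminp^2(C)$. The claimed equality therefore fails as soon as $\gamma^2\sminp^2(W)<\sminp^2(C)$. For example, take $n=2$, $d=1$, $C=1$, $W$ the Laplacian of one edge, and $\gamma<1/2$. Then $\bB^\top\bB$ has eigenvalues $1$ and $1+4\gamma^2$, so $\sminp^2(\bB)=1$, while the right-hand side is $4\gamma^2$.

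This does not hurt the paper. Every later use (the identical-constraints results and the scaling lemma closing the mixed-constraints appendix) needs only $\smax^2(\bB)\le\smax^2(C)+\gamma^2\smax^2(W)$ and $\sminp^2(\bB)\ge\min\{\sminp^2(C),\gamma^2\sminp^2(W)\}$. You prove both in full generality. The excluded case is also degenerate for the optimization problem, since a full-column-rank $C$ pins the shared variable down uniquely. The only other edge case is $C=0$, where the formula should be read with $\sminp(C)=+\infty$.
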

As a result, we obtain an enhanced bound on the number of communications w.r.t. Theorem~\ref{thm:LN_upper_bound}.
\begin{theorem}[\cite{rogozin2022decentralized_affine}]
\label{thm:LI_upper_bound}
	Applying Algorithm~\ref{alg:apapc} to problem~\eqref{prob:local_non_ident_constraints_on_shared_var_prob} with $C_1 = \ldots = C_n = C$, after applying $\bW\to P_W(\bW)$ and $\bC\to P_C(\bC^\top\bC)$, we obtain a method that requires
	\begin{align*}
    \hspace{-0.5cm}
		\Nnabla &= O\cbraces{\sqrt{\kappa_f}\log\cbraces{\frac1\eps}} \text{gradient calls}, \\
		\Nc &= O\cbraces{\sqrt{\kappa_f} \sqrt{\kappa_C} \log\cbraces{\frac1\eps}} \text{mul. by } \bC \text{ and } \bC^\top, \\
		\Nw &= O\cbraces{\sqrt{\kappa_f} \sqrt{\kappa_W} \log\cbraces{\frac1\eps}} \text{communications}.
	\end{align*}
  This upper bounds are optimal due to corresponding lower bounds.
\end{theorem}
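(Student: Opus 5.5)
We prove the upper bounds in Theorem~\ref{thm:LI_upper_bound} by a nested Chebyshev reduction: first precondition $\bW$ and $\bC$ separately, then apply Theorem~\ref{thm:apapc} to the reformulated constraint matrix $\bB^\top = [\bC^\top~~\gamma\bW]$, whose condition number is $O(1)$.

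\emph{Preconditioning.} Since $\bC = I_n\otimes C$ and $\bW = W\otimes I$, we replace $\bW$ by $P_W(\bW)$ and $\bC$ by $P_C(\bC^\top\bC)$. Here $P_W, P_C$ are scaled and shifted Chebyshev polynomials without constant term, nonvanishing on the nonzero spectrum. This leaves the feasible set unchanged: $\ker P_W(\bW)=\ker \bW$, and $\ker P_C(\bC^\top\bC)=\ker\bC$, with the right-hand side transformed as in Section~\ref{subsec:chebyshev_acc}. After rescaling, $P_W(W)$ has nonzero eigenvalues in $[1,2]$ (say), so $\kappa_{P_W(W)}=O(1)$. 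Since $P_C(\bC^\top\bC) = I_n\otimes P_C(C^\top C)$, we also get $\kappa_{P_C}=O(1)$. One multiplication by the new matrices costs $O(\sqrt{\kappa_W})$ communications and $O(\sqrt{\kappa_C})$ multiplications by $C, C^\top$, respectively.

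\emph{Spectrum of the stacked matrix.} The key observation for identical constraints is that $I_n\otimes P_C(C^\top C)$ and $P_W(W)\otimes I$ commute and are simultaneously diagonalizable in the eigenbasis $\{w_j\otimes v_k\}$. This is exactly the content of Lemma~\ref{lem:local_spectrum}. Applied to the preconditioned blocks, it gives $\smax^2(\bB)=O(1)+\gamma^2 O(1)$ and $\sminp^2(\bB)=\min(\Omega(1),\gamma^2\Omega(1))$. Choosing $\gamma=1$ yields $\kappa_\bB = O(1)$.

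I expect the main subtlety to be the minimal positive singular value. A vector orthogonal to $\ker\bB$ can mix components, so we must check that on each joint eigenvector $\|\bB h\|^2 = \lambda_k(P_C)^2\|h\|^2 + \gamma^2\lambda_j(P_W)^2\|h\|^2$. This quantity vanishes only if both eigenvalues vanish, and otherwise it is bounded below by the minimum of the two positive values. Commutation is essential here; it is precisely what fails in the nonidentical case of Theorem~\ref{thm:LN_upper_bound}.

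\emph{Applying APAPC.} Since $F$ is $L_f$-smooth and $\mu_f$-strongly convex on the full space, Theorem~\ref{thm:apapc} applies, with an additional Chebyshev step on $\bB$ in the style of \citet{salim2022optimal} so that the dependence becomes $\sqrt{\kappa_\bB}$. This gives $O(\sqrt{\kappa_f}\log(1/\eps))$ iterations. Each iteration uses one gradient, one multiplication by $P_C$, which costs $O(\sqrt{\kappa_C})$ multiplications by $C,C^\top$, and one multiplication by $P_W$, which costs $O(\sqrt{\kappa_W})$ communications. This yields $\Nnabla$, $\Nc$ and $\Nw$.

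\emph{Lower bounds.} Optimality follows from known special cases. Taking $C=0$ reduces the problem to consensus optimization, whose bound $\sqrt{\kappa_f}\sqrt{\kappa_W}$ is established in \cite{scaman2017optimal}. Taking $n=1$ reduces it to single-machine affine-constrained optimization, with bound $\sqrt{\kappa_f}\sqrt{\kappa_C}$ from \cite{salim2022optimal}. The gradient bound $\sqrt{\kappa_f}$ is the Nesterov bound.
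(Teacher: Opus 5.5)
Your proposal is correct and follows essentially the paper's route. You Chebyshev-precondition $\bW$ and $\bC=I_n\otimes C$ separately, use the commuting Kronecker structure (Lemma~\ref{lem:local_spectrum}) with $\gamma=1$ to get $\sminp(\bB),\smax(\bB)=\Theta(1)$, and run APAPC for $O(\sqrt{\kappa_f}\log(1/\eps))$ iterations, each costing one gradient, $O(\sqrt{\kappa_C})$ multiplications by $C,C^\top$ and $O(\sqrt{\kappa_W})$ communications.

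Two minor remarks. The additional Chebyshev step on $\bB$ is unnecessary; the paper omits it because $\kappa_\bB=O(1)$ already. For optimality, your specializations $C=0$ and $n=1$ are better phrased as embeddings (e.g.\ dummy coordinates carrying $C$, identical local problems at all nodes), so that tightness holds for every prescribed $\kappa_C$ and $\kappa_W$; the paper itself only defers this point to \cite{rogozin2022decentralized_affine}.
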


\subsection{Coupled Constraints}\label{app:coupled_cons}

Let $A_i \in \R^{m \times d_i}$.
A decentralized-friendly reformulation of coupled constraints is $\bA \bx + \bW \by = \bb$, where $\by \in \R^{mn}$. Then the problem writes as
\begin{align}\label{prob:coupled_constraints_prob}\tag{C}
	\min_{\bx\in\R^d}~ F(\bx) := \sum_{i=1}^n f_i(x_i) \quad \text{s.t.} \quad \bA\bx + \bW\by = \bb.
\end{align}
Indeed, since range of $\mathbf W$ is $\{\mathbf x : x_1 + x_2 + \ldots x_n = 0\}$ we have $\mathbf A \mathbf x - \mathbf b \in \text{range } \mathbf W \Leftrightarrow \sum_{i=1}^n(A_i x_i - b_i) = 0$. 
The variable $\mathbf y$  is introduced to parametrize $\text{range } \mathbf W$.
The cost of this, however, is that after addition of $\by$, the objective $F(\bx, \by)$ is no longer strongly convex. 
Thus, the essential technique to utilize strong convexity is to add augmented-Lagrangian-type penalization term in the objective with proper scaling (see, e.g.~\cref{lem:conv_smooth_c_and_l}).

The following definition is needed to describe convergence rates of decentralized algorithms for problems with coupled constraints.

\begin{lemma}[{\citep[Lemma 2]{yarmoshik2024decentralized}}] \label[lemma]{lem:coupled_spectrum}
	Denote $\bB = \pmat{\bA & \beta \bW}$.
	Setting $\beta^2 = \frac{\lminp(S_A) + \smax^2(\bA)}{\sminp^2(\bW)}$, we get
	\begin{align*}
		\smax^2(\bB)&\leq \smax^2(\bA) + (\smax^2(\bA) + \lminp(S_A))\kappa_W^2, \\
		\sminp^2(\bB)&\geq \frac{\lminp(S_A)}{2}.
	\end{align*}
\end{lemma}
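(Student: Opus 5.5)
The statement concerns only the Gram matrix $\bB\bB^\top = \bA\bA^\top + \beta^2\bW^2$, which acts on $(\R^m)^n$. The plan is to bound its largest eigenvalue directly, and to bound its smallest positive eigenvalue by splitting a test vector into its consensus part and its $\cL_m^\perp$ part. Throughout, $\bW$ is symmetric positive semidefinite, so $\smax(\bW) = \lmax(W)$ and $\sminp(\bW) = \lminp(W)$.

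\emph{Upper bound.} I would use
\[
\smax^2(\bB) = \lmax(\bA\bA^\top + \beta^2\bW^2) \leq \smax^2(\bA) + \beta^2\smax^2(\bW).
\]
Substituting $\beta^2 = (\lminp(S_A)+\smax^2(\bA))/\sminp^2(\bW)$ turns the second term into $(\lminp(S_A)+\smax^2(\bA))\,\smax^2(\bW)/\sminp^2(\bW)$. This ratio is at most the $\kappa_W^2$ in the statement under either normalization of $\kappa_W$, because $\kappa_W\ge 1$. So the upper bound is routine.

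\emph{Lower bound, step 1: range of $\bB\bB^\top$.} Take $y \in \operatorname{range}(\bB\bB^\top) = (\ker \bB^\top)^\perp$ and write $y = y_\parallel + y_\perp$, where $y_\parallel = \mathbf{1}\otimes\bar y \in \cL_m$ and $y_\perp\in\cL_m^\perp$.
\begin{itemize}
\item For any $z\in\ker S_A$ we have $A_i^\top z = 0$ for every $i$ (since $S_A$ is a sum of PSD terms). Also $\bW(\mathbf 1\otimes z)=0$. Hence $\mathbf 1\otimes z\in\ker\bB^\top$.
\item Orthogonality of $y$ to $\mathbf 1\otimes z$ therefore gives $\bar y\perp\ker S_A$.
\end{itemize}
This is the point where the choice of $\lminp(S_A)$ as opposed to $\lambda_{\min}$ is justified, and I expect it to be the main subtlety.

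\emph{Lower bound, step 2: the two pieces.}
\begin{itemize}
\item The consensus part satisfies
\[
\|\bA^\top y_\parallel\|^2 = \sum_i\|A_i^\top\bar y\|^2 = n\,\bar y^\top S_A\bar y \geq n\lminp(S_A)\|\bar y\|^2 = \lminp(S_A)\|y_\parallel\|^2 .
\]
\item Since $\cL_m^\perp = \operatorname{range}\bW$ and $\bW$ annihilates $y_\parallel$, we get $\|\bW y\|^2 = \|\bW y_\perp\|^2 \geq \sminp^2(\bW)\|y_\perp\|^2$.
\end{itemize}

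\emph{Lower bound, step 3: combine.} Using $(a+b)^2\ge \tfrac12 a^2 - b^2$,
\[
\|\bA^\top y\|^2 \ge \tfrac12\|\bA^\top y_\parallel\|^2 - \|\bA^\top y_\perp\|^2 \ge \tfrac12\lminp(S_A)\|y_\parallel\|^2 - \smax^2(\bA)\|y_\perp\|^2 .
\]
Adding $\beta^2\|\bW y\|^2 \ge (\lminp(S_A)+\smax^2(\bA))\|y_\perp\|^2$ cancels the negative term exactly. What remains is
\[
y^\top\bB\bB^\top y \ge \tfrac12\lminp(S_A)\|y_\parallel\|^2 + \lminp(S_A)\|y_\perp\|^2 \ge \tfrac12\lminp(S_A)\|y\|^2 .
\]
This gives $\sminp^2(\bB)\ge\lminp(S_A)/2$. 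The specific value of $\beta$ was chosen precisely to make this cancellation work.
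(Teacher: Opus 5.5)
Your proof is correct and takes essentially the same route as the paper. The paper does not reprove this cited lemma, but its proof of the generalization \Cref{lem:mixed_coupled_with_local_spectrum}, which reduces to your setting when $\bC = 0$, uses exactly your argument: it splits a vector of $\ker^\bot\bB^\top$ into its $\cL_m$ and $\cL_m^\perp$ parts, applies Young's inequality with factor $\frac12$, and chooses the scaling so that $\beta^2\sminp^2(\bW)$ cancels the $-\smax^2(\bA)$ term; the only cosmetic difference is that you certify $\bar y\perp\ker S_A$ via $\one\otimes\ker S_A\subseteq\ker\bB^\top$, whereas the paper parametrizes $\image\bB$ explicitly to get $v=\PL\bA\xi\in\image\PL\bA$, which is equivalent.
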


\begin{theorem}[\cite{yarmoshik2024decentralized}, Theorems 1 and 2]\label{thm:coupled_iclr}
    After penalizing \eqref{prob:coupled_constraints_prob} and applying Chebyshev accelerations $\bW\to P_W(\bW)$ and then $\bB\to P_B(\bB^\top \bB)$ and
    applying Algorithm~\ref{alg:apapc}, we obtain a method that requires
    \begin{align*}
        \Nnabla &= O\cbraces{\sqrt{\kappa_f}\log\cbraces{\frac1\eps}} \text{gradient calls}, \\
        \Na &= O\cbraces{\sqrt{\kappa_f} \sqrt{\hat\kappa_A} \log\cbraces{\frac1\eps}} \text{mul. by } \bA \text{ and } \bA^\top, \\
        \Nw &= O\cbraces{\sqrt{\kappa_f} \sqrt{\hat\kappa_A} \sqrt{\kappa_W} \log\cbraces{\frac1\eps}} \text{communications}.
    \end{align*}
    This bound is optimal in a naturally defined class of decentralized first-order algorithms for problems with coupled constraints.
\end{theorem}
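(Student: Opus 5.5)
The statement just above is Theorem~\ref{thm:coupled_iclr}, quoted from \cite{yarmoshik2024decentralized}. The plan is to assemble it from Theorem~\ref{thm:smooth_objectives_apapc_convergence}, Lemma~\ref{lem:coupled_spectrum} and Chebyshev acceleration, and to obtain optimality from a lower-bound construction.

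\textbf{Step 1: restore strong convexity.} Problem \eqref{prob:coupled_constraints_prob} lives in the variables $(\bx,\by)$, and $F$ does not depend on $\by$. I will therefore restrict $\by$ to $\cL_m^\perp$, since only $\image\bW$ matters, and replace the objective by the augmented function
\[
G(\bx,\by)=F(\bx)+\tfrac{r}{2}\|\bA\bx+\bW\by-\bb\|^2 .
\]
This does not change the solution set. The task is to choose $r$ so that $G$ is $\Theta(\mu_f)$-strongly convex and $\Theta(L_f)$-smooth on $\R^d\times\cL_m^\perp$.

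The natural choice is to normalize the constraint matrix so that $\smax(\bB)\sim\sqrt{L_f}$. Strong convexity in the $\by$-direction then comes from $\sminp$ of $(\bA\;\bW)$ restricted to $\cL_m^\perp$. The cross-term with $\bx$ is absorbed using the $\mu_f$-strong convexity of $F$, via a Schur-complement or Young inequality argument, at the price of a constant factor. This is the step I expect to be the main obstacle: the scalings must be chosen so that the condition number of $G$ stays $O(\kappa_f)$ rather than picking up $\hat\kappa_A$.

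\textbf{Step 2: precondition.} Apply $\bW\to P_W(\bW)$ with a Chebyshev polynomial, so that $\kappa_{P_W(\bW)}=O(1)$ at a cost of $O(\sqrt{\kappa_W})$ communications per multiplication. Then form $\bB=(\bA\;\beta P_W(\bW))$ with $\beta$ chosen as in Lemma~\ref{lem:coupled_spectrum}. Since the bound there carries $\kappa_W^2$ and $\kappa_{P_W(\bW)}=O(1)$, this gives
\[
\kappa_\bB=\frac{\smax^2(\bB)}{\sminp^2(\bB)}=O\!\left(\frac{\smax^2(\bA)}{\lminp(S_A)}\right)=O(\hat\kappa_A).
\]
Next apply $\bB\to P_B(\bB^\top\bB)$, which brings the condition number down to $O(1)$ at a cost of $O(\sqrt{\hat\kappa_A})$ multiplications by $\bB$ and $\bB^\top$ per application. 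Each such multiplication costs one multiplication by $\bA$ and $\bA^\top$ and $O(\sqrt{\kappa_W})$ communications.

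\textbf{Step 3: count oracle calls.} By Theorem~\ref{thm:smooth_objectives_apapc_convergence}, APAPC needs $O(\sqrt{\kappa_f}\log(1/\eps))$ iterations. Each iteration makes one gradient call and one application of $P_B$. Multiplying through gives the three stated counts:
\begin{itemize}
\item $O(\sqrt{\kappa_f}\log(1/\eps))$ gradient calls;
\item $O(\sqrt{\kappa_f}\sqrt{\hat\kappa_A}\log(1/\eps))$ multiplications by $\bA$ and $\bA^\top$;
\item $O(\sqrt{\kappa_f}\sqrt{\hat\kappa_A}\sqrt{\kappa_W}\log(1/\eps))$ communications.
\end{itemize}
Convergence of $\|\bx^N-\bx^*\|$ in argument transfers to the original variables.

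\textbf{Step 4: lower bound.} I would use a Nesterov-type worst-case quadratic in the spirit of \cite{scaman2017optimal}, with a path graph whose $\kappa_W$ is prescribed. The constraint matrices are placed so that only the two endpoint nodes carry nonzero $A_i$, which forces $\lminp(S_A)$ to be small relative to $\max_i\lmax(A_iA_i^\top)$. The dual problem then has condition number about $\kappa_f\hat\kappa_A$. Information about each new coordinate requires both a multiplication by $A_i$ and $\Omega(\sqrt{\kappa_W})$ communication rounds to cross the graph. Combined with the standard Krylov-subspace argument, this yields matching lower bounds on all three counts.
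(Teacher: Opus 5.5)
\textbf{Gap in Step 4.} Putting nonzero $A_i$ only on the two endpoint nodes makes $\lminp(S_A)$ small only through the averaging factor $1/n$ in $S_A=\frac1n\sumin A_iA_i^\top$. That does not make the instance any harder.

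\begin{itemize}
\item The dual objective $\sumin f_i^*(A_i^\top z)$ has condition number $\kappa_f\,\lmax(\sumin A_iA_i^\top)/\lminp(\sumin A_iA_i^\top)$. For your instance this is at most $2\kappa_f\hkappa_A/n$, not $\kappa_f\hkappa_A$ as you claim.
\item A path realizing $\kappa_W$ has $n=\Theta(\sqrt{\kappa_W})$ vertices. Your Krylov argument therefore gives only $N_A=\Omega(\kappa_W^{-1/4}\sqrt{\kappa_f\hkappa_A}\log\frac1\eps)$ and $N_W=\Omega(\kappa_W^{1/4}\sqrt{\kappa_f\hkappa_A}\log\frac1\eps)$. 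Each is short by a factor $\kappa_W^{1/4}$.
\item This is not slack in the analysis. An accelerated dual method that relays the dual variable between the two active nodes essentially attains these counts.
\end{itemize}

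The missing idea is to activate a constant fraction of the nodes at each end. This is the cited construction, which the paper reuses in its appendices:
\begin{itemize}
\item split the path into $\cV_1,\cV_2,\cV_3$ of $n/3$ vertices each;
\item give every node of $\cV_1$ the block $\bar A_1$ built from $\bE_1$, and every node of $\cV_3$ the block $\bar A_2$ built from $\bE_2$ (rows of a square root of Nesterov's tridiagonal matrix, split alternately);
\item give zero blocks to $\cV_2$.
\end{itemize}
Then $S_A=\frac13(\bar A_1\bar A_1^\top+\bar A_2\bar A_2^\top)$, so $\hkappa_A$ matches the dual conditioning ratio up to a constant. Meanwhile, each group alone can extend the support by only one coordinate. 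Every further step must cross the $n/3$ middle vertices, which costs $\Omega(\sqrt{\kappa_W})$ rounds.

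The gradient-call lower bound also does not come out of this dual construction, because its $f_i$ are simple quadratics. It needs the separate trick of adding independent copies of Nesterov's function at each node.

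\textbf{Steps 1--3.} These follow the cited route: an augmented-Lagrangian penalty with $\by\in\cL_m^\perp$, Chebyshev on $\bW$, the scaling of Lemma~\ref{lem:coupled_spectrum} giving $\kappa_\bB=O(\hkappa_A)$, Chebyshev on $\bB$, then APAPC. Two points need fixing.
\begin{itemize}
\item The penalty must use exactly the scaled, preconditioned matrix $(\bA\;\;\beta P_W(\bW))$ that defines the constraint. With the unscaled $\bW$ you wrote, the penalty does not vanish on the feasible set and the minimizer moves.
\item $\kappa_G=O(\kappa_f)$ is achievable only after $\bW$ is preconditioned. The pure-$\by$ directions alone force $\kappa_G\ge\lmax^2(\bW)/\lminp^2(\bW)$ for every $r$. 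Your worry about $\hkappa_A$ entering $\kappa_G$ is misplaced.
\end{itemize}
The cited choice is $r=\mu_f/(2L_A)$ with $\bW$-scaling $\alpha^2\in[L_A/\mu_W,2L_A/\mu_W]$, which is compatible with $\beta$ from Lemma~\ref{lem:coupled_spectrum}. It gives $\mu_G=\Theta(\mu_f)$ and $L_G\le L_f\smax^2(\bW)/\sminp^2(\bW)$.
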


\newpage

\section{Proof of the Complexity Bounds for Shared Variable Constraints (\Cref{thm:LN_upper_bound})}\label{app:nonind_local_lower}
\subsection{The upper bound}\label[app]{app:nonind_upper}
As announced in the theorem's statement, we first apply Chebyshev's preconditioning (\Cref{subsec:chebyshev_acc}) to matrix $\bW$ and obtain matrix $\bW' = P_W(\bW)$ with 
$\kappa_{\bW'} = O(1)$.
Then, by \Cref{lem:coupled_spectrum}, choosing $\gamma$ according to its statement, we obtain for $\widetilde{\bB}^\top = \pmat{\widetilde{\bC} & \gamma \bW'}$ 
\begin{equation}
  \kappa_{\widetilde{\bB}} 
  = \frac{\smax^2(\widetilde{\bB})}{\sminp^2(\widetilde{\bB})} 
  = \frac{\smax^2(\widetilde{\bB}^\top)}{\sminp^2(\widetilde{\bB}^\top)} 
  \leq \frac{\smax^2(\widetilde{\bC}^\top) + \left[\smax^2(\widetilde{\bC}^\top) + \lminp(S_{C^\top})\right]\kappa^2_{\bW'}}{\frac12\lminp(S_{C^\top})} = O(\kappa^2_{\bW'}\hkappa_{\widetilde{\bC}}) =  O(\hkappa_{\widetilde{\bC}}).
\end{equation}
Finally, due to \Cref{thm:apapc}, iteration complexity of \Cref{alg:apapc} applied to the problem 
\begin{equation}
  \min_{\mathbf{x} \in (\mathbb{R}^d)^n} F(\mathbf{x}) := \sum_{i=1}^n f_i(x_i)
  \textrm{ s.t. } \widetilde{\bB}' x = \tilde{\bb}',
\end{equation}
is $N = O(\sqrt{\kappa_f}\log \frac{1}{\eps})$, where $\widetilde{\bB}' = P_{\widetilde{B}}(\widetilde{\bB}^\top\widetilde{\bB})$, $\kappa_{\widetilde{\bB}'} = O(1)$ and $\tilde{\bb}' = \frac{P_{\widetilde{B}}(\widetilde{\bB}^\top\widetilde{\bB})}{\widetilde{\bB}^\top \widetilde{\bB}}\widetilde{\bB}^\top \pmat{\tilde{\bc} \\ 0}$.

Each iteration of \Cref{alg:apapc} in this case requires $1$ computation of $\nabla f$,
$O(\deg P_{\widetilde{\bB}}) = O(\sqrt{\kappa_{\widetilde{\bB}}}) = O(\sqrt{\hkappa_{\widetilde{\bC}}})$ multiplications by $\widetilde{\bB}, \widetilde{\bB}^\top$, each requiring $O(1)$ multiplication by $\widetilde{\bC}, \widetilde{\bC}^\top$ and 
$O(\deg P_{W}) = O(\sqrt{\kappa_W})$ multiplications by $\bW$, i.e., communication rounds, which gives the first part of the theorem.

\subsection{The lower bound}
This proof is a modification of the proof of \citep[Theorem 2]{yarmoshik2024decentralized}.
The main difference is in how we split a constraint matrix to obtain a splitting of the Nesterov's bad function 
$h(z) = \frac{1}{2} z^\top \bM z + \alpha \sqn{z} - z_1$
between nodes in \Cref{sec:local_cons_example_mats}.
In the original case of coupled constraints, matrix $\bM$ is split into two summands $\bM = \bE_1^\top \bE_2 + \bE_2^\top \bE_2$. 
Matrices $\bE_1, \bE_2$ are used to form matrices of the coupled constraint.
Here we take a non-symmetric root matrix $\bE$ such that $\bM = \bE^\top \bE$ and split it into matrices $\bE_1, \bE_2$ by rows to form matrices of local constraints.
For completeness, we describe here the whole construction of the lower bound, since its other parts, such as objective functions defined in \Cref{sec:local_cons_example_funcs}, also required changes (more subtle and technical, though) to be applied in this setup.

\subsubsection{Dual problem}
The proof relies on obtaining Nesterov's function as the objective of the \textit{dual} problem.
The primal and dual variables in our construction have similar component structure, what allows to derive the upper bound on accuracy of an approximate solution to the original problem from the explicit expression for the exact solution of the dual problem and an upper bound on the number of nonzero components in the approximate solution.

Let us derive the dual problem. Consider the primal problem with zero right-hand side in the constraints
\begin{equation}
  \begin{aligned}
   \label{prob:lower_primal}
  &\min_{x_1, \ldots, x_n\in\ell_2}~ \sum_{i=1}^n f_i(x_i)
 \\
  \text{s.t. } &\bC_i x_i = 0 \quad \forall i=1,\ldots,n.
  \\
 &x_1 = \ldots = x_n.
  \end{aligned}
\end{equation}

Simplifying the consensus constraint and combining the affine constraints to a single constraint $\tC x = 0$ (e.g., by vertically stacking matrices $\bC_i$), we rewrite the problem as
\begin{equation}
  \begin{aligned}
  &\min_{x \in\ell_2}~ \sum_{i=1}^n f_i(x)
 \\
  \text{s.t. } &\tC x = 0.
  \end{aligned}
\end{equation}

The dual problem has the form
\begin{equation}\label{prob:lower_dual}
\begin{aligned}
	& \max_{z}\min_{x\in\ell_2} \sbraces{\sum_{i=1}^n f_i(x) - \angles{z, \tC x}} 
	= -\min_z F^*(\tC^\top z),
\end{aligned}
\end{equation}
where $F(x) = \sumin f_i(x)$.

\subsubsection{Example  graph}\label[app]{app:example_graph}
The graph construction is the same as in the lower bound for coupled constraints.

We follow the principle of lower bounds construction introduced in \cite{kovalev2021lower} and take the example graph from \cite{scaman2017optimal}. Let the functions held by the nodes be organized into a path graph with $n$ vertices, where $n$ is divisible by $3$. The nodes of graph $\cG = (\cV, \cE)$ are divided into three groups $\cV_1 = \braces{1, \ldots, n/3}, \cV_2 = \braces{n/3 + 1, \ldots, 2n/3}, \cV_3 = \braces{2n/3 + 1, \ldots, n}$ of $n/3$ vertices each.

Now we recall the construction from \cite{scaman2017optimal}. Maximum and minimum eigenvalues of a path graph have form $\lmax(W) = 2\cbraces{1 + \cos\frac{\pi}{n}},~ \lambda_{\min^+}(W) = 2\cbraces{1 - \cos\frac{\pi}{n}}$. Let $\beta_n = \frac{1 + \cos\cbraces{\frac{\pi}{n}}}{1 - \cos\cbraces{\frac{\pi}{n}}}$. Since $\beta_n\overset{n\to\infty}{\rightarrow} +\infty$, there exists $n = 3m\geq 3$ such that $\beta_n\leq \kappa_W < \beta_{n+3}$. For this $n$, introduce edge weights $w_{i, i+1} = 1 - a\mathbb{I}\braces{i=1}$, take the corresponding weighted Laplacian $W_a$ and denote its condition number $\kappa(W_a)$. If $a = 1$, the network is disconnected and therefore $\kappa(W_a) = \infty$. If $a = 0$, we have $\kappa(W_a) = \beta_n$. By continuity of Laplacian spectra we obtain that for some $a \in [0, 1)$ it holds $\kappa(W_a) = \kappa_W$. Note that $\pi/(n+3)\in[0, \pi/3]$, and for $x\in[0, \pi/3]$ we have $1 - \cos x\geq x^2/4$. We have
\begin{align}\label{eq:n_chi_relation}
	\kappa_W\leq \beta_{n+3} = \frac{1 + \cos\frac{\pi}{n+3}}{1 - \cos\frac{\pi}{n+3}}
	\leq \frac{72(n+3)^2}{\pi^2}\leq \frac{288 n^2}{\pi^2}\leq 32n^2~~~\Rightarrow~~~ \sqrt{\kappa_W}\leq 4\sqrt 2n = O(n).
\end{align}

\subsubsection{Example functions}\label{sec:local_cons_example_funcs}
\newcommand{\hLc}{L'_\bC}
\newcommand{\hmuc}{\mu'_\bC}
\newcommand{\shLc}{\sqrt{\hLc}}
\newcommand{\shmuc}{\sqrt{\hmuc}}
We let $e_1 = (1~ 0~ \ldots~ 0)^\top$ denote the first coordinate vector, let $x$ be composed from two variable blocks $x =\pmat{p \\t }$. 
We set functions $f_i$ to be the same for all nodes, and define them as
\begin{align*}
	f_i(p, t) = \frac{\mu_f}{2} \left\|p \right\|^2 + L_f\sqn{t + \frac{\hLc}{\mu_f}e_1}.
\end{align*}
Correspondingly,

\begin{align}\label{eq:conj_fi}
	f_i^*(u, v) = \frac{1}{2\mu_f} \sqn{u} + \frac{1}{L_f} \sqn{v} - \frac{\hLc}{\mu_f}v_1.
\end{align}

Let
\begin{equation}
  \bE = \pmat{
    1 & 0 & 0 & 0 & \cdots \\
    -1 & 1 & 0 & 0 & \cdots \\
    0 & -1 & 1 & 0 & \cdots \\
    0 & 0 & -1 & 1 & \cdots \\
    \vdots & \vdots & \vdots & \vdots & \ddots \\
  }, \quad
  \tC = \pmat{-\shLc\bE^\top  & \shmuc\bI}
\end{equation}

From \eqref{prob:lower_dual} and \eqref{eq:conj_fi}, the dual problem is
\begin{problem}\label{prob:dual_as_nesterov}
\begin{aligned}
  &\min_z \frac{\hLc}{2\mu_f} \sqn{\bE z} + \frac{\hmuc}{L_f} \sqn{z} - \frac{\hLc}{\mu_f}z_1 
  \\=
  &\min_z \frac{\hLc}{2\mu_f} \<z, \bM z> + \frac{\hmuc}{L_f} \sqn{z} - \frac{\hLc}{\mu_f}z_1 
  \\=
  &\min_z \frac{\hLc}{\mu_f} \cbr{
  \frac12\<z, \bM z> + \frac{\hmuc\mu_f}{\hLc L_f} \sqn{z} - z_1 }
  ,
\end{aligned}
\end{problem}
where 
\begin{align*}
	\bM = \bE^\top \bE = \begin{pmatrix}
		~~~2 & -1 & ~~~0 & ~~~0 & 0 & \ldots \\
		-1 & ~~~2 & -1 & ~~~0 & 0 & \ldots \\
		~~~0 & -1 & ~~~2 & -1 & 0 & \ldots \\
		\vdots & \vdots & \vdots & \vdots & \vdots & \ddots
	\end{pmatrix}.
\end{align*}
\Cref{prob:dual_as_nesterov} is exactly the Nesterov's worst problem for smooth strongly convex minimization by first-order methods.
\begin{lemma}[{\citep[Lemma 6]{yarmoshik2024decentralized}}]\label[lemma]{lem:nesterovs_worst_rho}
	The solution of \Cref{prob:dual_as_nesterov} is $z^* = \braces{\rho^k}_{k=1}^\infty$, where
	\begin{align*}
		\rho = \frac{\sqrt{\frac{2}{3} \frac{L_\bC L_f}{\mu_\bC \mu_f} + 1} - 1}{\sqrt{\frac{2}{3} \frac{L_\bC L_f}{\mu_\bC \mu_f} + 1} + 1}.
	\end{align*}
\end{lemma}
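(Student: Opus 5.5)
The plan is to write down the first-order optimality conditions of \Cref{prob:dual_as_nesterov}, observe that they form a second-order linear recurrence with constant coefficients, and check that the geometric sequence $z_k=\rho^k$ solves it. Set
\[
a=\frac{\hmuc\mu_f}{\hLc L_f}>0,
\]
so that, up to the positive factor $\hLc/\mu_f$, the objective is $\phi(z)=\frac12\langle z,\bM z\rangle+a\|z\|^2-z_1$ on $\ell_2$.

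\textbf{Existence and uniqueness.} Since $\bM=\bE^\top\bE\succeq 0$ is bounded on $\ell_2$, the function $\phi$ is $2a$-strongly convex and smooth. Hence it has a unique minimizer, characterized by $\nabla\phi(z)=0$, that is, $(\bM+2a\bI)z=e_1$. Writing this out with the tridiagonal form of $\bM$ gives
\begin{align*}
(2+2a)z_1-z_2&=1,\\
-z_{k-1}+(2+2a)z_k-z_{k+1}&=0,\quad k\ge 2.
\end{align*}

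\textbf{Geometric ansatz.} Substituting $z_k=\rho^k$ into the equations for $k\ge2$ gives the characteristic equation $\rho^2-(2+2a)\rho+1=0$. Its roots are positive with product $1$. I take the root $\rho\in(0,1)$, so that $z\in\ell_2$. The first equation becomes $(2+2a)\rho-\rho^2=1$, which is the same characteristic equation. So the boundary condition is satisfied automatically, with no extra constant needed. By uniqueness, $z^*=\{\rho^k\}_{k\ge1}$.

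\textbf{Closed form for $\rho$.} Parametrize $\rho=(s-1)/(s+1)$ with $s>1$. Then
\[
\rho+\rho^{-1}=\frac{2(s^2+1)}{s^2-1}=2+2a,
\]
which gives $s^2=1+2/a=1+2\,\frac{\hLc L_f}{\hmuc\mu_f}$. This already has the form stated in the lemma.

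The only step needing care is matching constants. The lemma writes $\frac23\frac{L_\bC L_f}{\mu_\bC\mu_f}$, whereas the computation gives $2\frac{\hLc L_f}{\hmuc\mu_f}$. So one has to substitute the (later) definitions of $\hLc,\hmuc$ in terms of $L_\bC,\mu_\bC$, presumably $\hLc/\hmuc=\frac13 L_\bC/\mu_\bC$, coming from the row splitting of $\bE$ among the three node groups, and verify that the ratio $\hLc/\hmuc$ equals $L_\bC/(3\mu_\bC)$. I expect this bookkeeping, not the recurrence argument, to be the main obstacle. The rest is the standard Nesterov computation.
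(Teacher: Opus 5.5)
Your recurrence argument is correct, and it is the standard Nesterov computation behind the cited Lemma~6; the paper gives no proof of its own here. Strong convexity gives a unique minimizer solving $(\bM+2a\bI)z=e_1$. Because $\bM_{11}=2$, the first row coincides with the characteristic equation $\rho^2-(2+2a)\rho+1=0$. The root in $(0,1)$ is $\rho=\frac{s-1}{s+1}$ with $s^2=1+2/a$, where $a=\frac{\mu'_\bC\mu_f}{L'_\bC L_f}$.

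The step you deferred cannot be completed, because the identity you hope to verify is false for this paper's constants. In the construction of the example matrices the paper sets $L'_\bC=\frac12L_\bC-\frac32\mu_\bC$ and $\mu'_\bC=3\mu_\bC$. Up to a common factor, which does not change $a$, these are forced by the computed values $\max_i\lambda_{\max}(\bC_i\bC_i^\top)=2L'_\bC+\mu'_\bC=L_\bC$ and $\lambda_{\min^+}(S_{C^\top})=\mu'_\bC/3=\mu_\bC$. Hence
\[
\frac{2}{a}=\frac{2L'_\bC}{\mu'_\bC}\cdot\frac{L_f}{\mu_f}=\Bigl(\frac{L_\bC}{3\mu_\bC}-1\Bigr)\frac{L_f}{\mu_f}\neq\frac23\,\frac{L_\bC L_f}{\mu_\bC\mu_f}
\]
for every admissible $L_\bC>3\mu_\bC$. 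Your guess $L'_\bC/\mu'_\bC=L_\bC/(3\mu_\bC)$ would force $L'_\bC=L_\bC$, and then $2L_\bC+3\mu_\bC=L_\bC$, which is impossible.

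So what your argument actually proves is the following: the minimizer of the dual problem as written is $\{\rho^k\}$ with $\rho=\frac{s-1}{s+1}$ and $s^2=1+\frac{L_f}{\mu_f}\bigl(\frac{L_\bC}{3\mu_\bC}-1\bigr)$. The displayed formula in the statement is correct only up to constant factors inside the square root; it appears to be carried over from the normalization of the cited source.

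Instead of a verification step, state this corrected $\rho$. Then note that for, e.g., $L_\bC\ge 6\mu_\bC$ one has $s^2-1\in\bigl[\frac{L_\bC L_f}{6\mu_\bC\mu_f},\frac{L_\bC L_f}{3\mu_\bC\mu_f}\bigr]$ and $\log(1/\rho)\le\frac{2}{s-1}$. That is all the subsequent accuracy argument needs to obtain $q\ge\Omega\bigl(\sqrt{L_\bC L_f/(\mu_\bC\mu_f)}\log(1/\eps)\bigr)$.
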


\subsubsection{Example matrices}\label{sec:local_cons_example_mats}
We split matrix $\tC$ into two matrices as follows: even rows of $\tC$ go to the first matrix, odd rows are filled with zeros; the second matrix is constructed in the same way from odd rows of $\tC$.
Formally, let $\hLc = \frac{1}{2} L_\bC - \frac{3}{2} \mu_\bC,~ \hmuc = 3\mu_\bC$, where $L_\bC > 0$ and $\mu_\bC > 0$ are any parameters such that $\hat \kappa_{C^\top} = \frac{L_\bC}{\mu_\bC}$,  and introduce
\begin{align*}
	\bC_i = \begin{cases}
    \pmat{\shLc \bE_1^\top & \shmuc \bI_1}, &i\in\cV_1 \\
    \pmat{\mathbf{0} & \qquad\qquad\mathbf{0}\qquad },  &i\in\cV_2 \\
    \pmat{\shLc \bE_2^\top & \shmuc \bI_2}, &i\in\cV_3
	\end{cases},
\end{align*}

\begin{equation}\label{eq:E1}
  \bE_1^\top = \pmat{
    1 & -1 & 0 & 0 & 0 &\cdots \\
    0 & 0 & 0 & 0 & 0 &\cdots \\
    0 & 0 & 1 & -1 & 0 &\cdots \\
    0 & 0 & 0 & 0 & 0 &\cdots \\
    \vdots & \vdots & \vdots & \vdots & \vdots & \ddots \\
  }, \quad
  \bI_1 = \pmat{
    1 & 0 & 0 & 0 & \cdots \\
    0 & 0 & 0 & 0 & \cdots \\
    0 & 0 & 1 & 0 & \cdots \\
    0 & 0 & 0 & 0 & \cdots \\
    \vdots & \vdots & \vdots & \vdots & \ddots \\
  },
\end{equation}
\begin{equation}\label{eq:E2}
  \bE_2^\top = \pmat{
    0 & 0 & 0 & 0 & 0 &\cdots \\
    0 & 1 & -1 & 0 & 0 &\cdots \\
    0 & 0 & 0 & 0 & 0 &\cdots \\
    0 & 0 & 0 & 1 &  -1 &\cdots \\
    \vdots & \vdots & \vdots & \vdots & \vdots & \ddots \\
  }, \quad
  \bI_2 = \pmat{
    0 & 0 & 0 & 0 & \cdots \\
    0 & 1 & 0 & 0 & \cdots \\
    0 & 0 & 0 & 0 & \cdots \\
    0 & 0 & 0 & 1 & \cdots \\
    \vdots & \vdots & \vdots & \vdots & \ddots \\
  }.
\end{equation}

It is clear, that so-defined local constraints are equivalent to $\tC x = 0$.

Let us make sure that this choice of $\bC_i$ indeed guarantees that 
$\frac{\underset{i=1,\ldots,n}{\max}\lmax(C_i^\top C_i)}{\lminp(S_{C^\top})} \leq \hat\kappa_{C^\top}$ %
as required by the statement of the theorem and \Cref{def:mixed_condition_number}.

For the numerator we have
\begin{equation}
\max_i \lmax (\bC_i^\top \bC_i) = \max_i \lmax (\bC_i \bC_i^\top ) = \lmax \cbraces{\hLc \bE_1^\top \bE_1 + \hmuc \bI} = 2\hLc + \hmuc = L_\bC.
\end{equation}

For the denominator direct calculation yields
\begin{equation}
\hspace{-2cm}
\bC_1^\top \bC_1= \pmat{
  1&-1&0&0&\cdots&1&0&0&0&\cdots\\
  -1&1&0&0&\cdots&-1&0&0&0&\cdots\\
  0&0&1&-1&\cdots&0&0&1&0&\cdots\\
  0&0&-1&1&\cdots&0&0&-1&0&\cdots\\
  \vdots &\vdots &\vdots &\vdots&\cdots &\vdots &\vdots &\vdots &\vdots&\cdots \\
  1&-1&0&0&\cdots&1&0&0&0&\cdots\\
  0&0&0&0&\cdots&0&0&0&0&\cdots\\
  0&0&1&-1&\cdots&0&0&1&0&\cdots\\
  0&0&0&0&\cdots&0&0&0&0&\cdots\\
  \vdots &\vdots &\vdots &\vdots &\cdots&\vdots &\vdots &\vdots &\vdots &\ddots\\
}, \quad
\bC_2^\top \bC_2 = \pmat{
0&0&0&0&\cdots&0&0&0&0&\cdots\\
0&1&-1&0&\cdots&0&1&0&0&\cdots\\
0&-1&1&0&\cdots&0&-1&0&0&\cdots\\
0&0&0&1&\cdots&0&0&0&1&\cdots\\
\vdots &\vdots &\vdots &\vdots&\cdots &\vdots &\vdots &\vdots &\vdots &\cdots\\
  0&0&0&0&\cdots&0&0&0&0&\cdots\\
  0&1&-1&0&\cdots&0&1&0&0&\cdots\\
  0&0&0&0&\cdots&0&0&0&0&\cdots\\
  0&0&0&1&\cdots&0&0&0&1&\cdots\\
  \vdots &\vdots &\vdots &\vdots&\cdots &\vdots &\vdots &\vdots &\vdots &\ddots\\
}.
\end{equation}
Using this, we get
\begin{align*}
\lminp \cbraces{\frac{1}{n} \sum_{i=1}^n \bC_i^\top \bC_i} 
 &=
 \frac13\lminp\pmat{
   \hLc (\bE_1 \bE_1^\top + \bE_2 \bE_2^\top) & \sqrt{\hLc\hmuc}(\bE_1 \bI_1 + \bE_2 \bI_2)\\
 \sqrt{\hLc\hmuc}(\bI_1^\top \bE_1 + \bI_2^\top \bE_2)  & \hmuc (\bI_1^\top  \bI_1 + \bI_2^\top  \bI_2)}
 \\&=
 \frac13\lminp\pmat{
   \hLc \bL_{\text{path}} & \sqrt{\hLc\hmuc}\bE\\
\sqrt{\hLc\hmuc}\bE^\top  & \hmuc \bI}
 \\&=
 \frac13\lminp\cbr{\pmat{
 \shLc \bE \\ \shmuc \bI}
\pmat{\shLc\bE^\top & \shmuc \bI}}
 \\&=
 \frac13\lminp\cbr{
\pmat{\shLc\bE^\top & \shmuc \bI}
\pmat{
\shLc \bE \\ \shmuc \bI}}
 \\&=
 \frac13\lminp\cbr{\hLc \bM + \hmuc \bI} = \frac{\hmuc}{3} = \mu_\bC,
\end{align*}
where $\bL_{\text{path}} = \bE\bE^\top$ is the Laplacian of infinite (in one direction) path graph, which differs from $\bM$ only in the first diagonal component $\bL_{\text{path}}[1, 1] = 1$.

\subsubsection{Bounding accuracy}\label{sub:bounding_accuracy}
We consider the class of first-order decentralized algorithms for problems with local affine constraints defined as follows
\begin{definition}[{\citep[Definition 1]{yarmoshik2024decentralized_local}}]\label{def:foda}
Denote $\cM_i(k)$, where $\cM_i(0) = \{x_i^0\}$, as the local memory of the $i$-th node at step $k$. %
The set of allowed actions of a first order decentralized algorithm at step $k$ is restricted to the three options
\begin{enumerate}
  \vspace{-0.3cm}
\item Local computation: $\cM_i(k) = \spn\cbraces{\{x, \nabla f_i(x), \nabla f^*_i(x): x \in \cM_i(k) \}}$;
\item Decentralized communication with immediate neighbours: $\cM_i(k) = \spn\cbraces{\{\cM_j(k): \text{edge}~ (i, j) \in E\}}$.
\item Matrix multiplication: $\cM_i(k) = \spn\cbraces{\{b_i, \bC_i^\top \bC_i x: x \in \cM_i(k)\}}$.
  \vspace{-0.3cm}
\end{enumerate}
After each step $k$, an algorithm must provide a current approximate solution $x^k_i \in \cM_i(k)$ and set $\cM_i(k+1) = \cM_i(k)$.
\end{definition}

Without loss of generality, we can assume $x_i^0 = 0$. Recall that we split $x$ in two variable blocks $p$ and $t$.
From the structure of matrix $\bC_1^\top\bC_1$ it is clear, that a node $i \in \cV_1$ on $k$-th step can only increase the number of nonzero components in $p^k_i$ or $t_i^k$ by one if the index of the last nonzero component in $p^k_i$ or $t^k_i$ is odd.
Similarly, by structure of $\bC_2^\top\bC_2$, nodes from $\cV_2$ can only ``unlock'' next zero component if it is odd.
Thus, to increase the number of nonzero components in $p_i^k$ or $t_i^k$ by two, the information must be transmitted from $\cV_1$ to $\cV_2$ and back (or vice versa), what requires to perform $2 n / 3 = \Omega(\sqrt{\kappa_W})$ decentralized communication rounds and two matrix multiplications.

Due to the strong duality, the solution of problem \eqref{prob:lower_primal} can be obtained from the solution of its dual $\eqref{prob:lower_dual}$ as
\begin{equation*}
x^*(z^*) = \pmat{p^* \\ t^*}(\bC^\top z^*) = 
\pmat{\frac{\shLc}{\mu_f} \bE z^*  
\\
\frac{\sqrt{\hat\mu_\bA}}{2L_f}(z^* - \frac{\hLc}{\mu_f}e_1)
}. 
\end{equation*}
Therefore $t^*$ is just a scaled version of $z^*$ up to the first component.
\Cref{lem:nesterovs_worst_rho} and the standard calculation (see \citep[Appendix C.4]{yarmoshik2024decentralized}) then leads to the following bound on the number $q$ of nonzero components in $t_i^k$ required to reach the accuracy 
$\sqn{x_i^k - x_i^*} \leq \eps$:
\begin{equation}
  q \geq \Omega\cbr{\sqrt{\frac{L_\bC L_f}{\mu_\bC \mu_f}} \log\cbr{\frac{1}{\eps}}},
\end{equation}
what translates to the required number of matrix multiplications
\begin{equation}
  N_\bC \geq \Omega\cbr{\sqrt{\frac{L_\bC L_f}{\mu_\bC \mu_f}} \log\cbr{\frac{1}{\eps}}},
\end{equation}
and decentralized communication rounds
\begin{equation}
  N_\bW \geq \Omega\cbr{\sqrt{\kappa_W}\sqrt{\frac{L_\bC L_f}{\mu_\bC \mu_f}} \log\cbr{\frac{1}{\eps}}}.
\end{equation}

The lower bound on the number of gradient computations is obtained using the same sum-trick as in \cite{yarmoshik2024decentralized}:
to each $f_i(x_i)$ we add an independent copy of Nestrov's worst function $h_i(w_i)$ with appropriate parameters (variables $w_i$ are new independent variables without any coupling between different nodes).

\newpage
\section{Proof of \Cref{lem:mixed_coupled_with_local_spectrum}}\label{app:mixed_coupled_with_local_spectrum}
First, the squared maximum singular value of a block matrix is upper bounded by the sum of  the squared maximum singular values of its blocks, therefore

 \begin{align*}
	    \smax^2\cbr{\bB} &\leq \smax^2\cbr{\bA} + \gamma^2\smax^2\cbr{\bW}  + \beta^2\smax^2\cbr\bC.
	\end{align*}

We set coefficients $\alpha$ and $\beta$ to
\begin{equation}\label{eq:block_scaling}
  \alpha^2=\frac{1}{\mu_W} \begin{cases}
    L_A + \frac14\muproj, &\muproj>0\\
    2L_A, &\muproj=0
  \end{cases},
  \quad
  \beta^2 = \frac{1}{\mu_C} \begin{cases}
    L_S + \frac12\muproj, & \muproj > 0\\
    L_S + 2L_A, &   \muproj = 0
  \end{cases},
\end{equation}
where $L_S = \frac1n \smax^2(\bA')$.

We are going to prove the following bound on minimal positive singular value of $\bB$:
  \begin{equation}
    \sminp^2(\bB) \geq 
      \begin{cases}
        \frac14\muproj, & \muproj > 0,\\
        L_A, & \muproj = 0.
      \end{cases}
  \end{equation}

\textbf{Proof of the lower bound on $\sminp^2(\bB)$}

Since $\sminp^2(\bB) = \sminp^2(\bB^\top)$ we will bound the latter.
image
We have
$\ker^\bot \bB^\top = \image \bB = \image \pmat{\bA \\ \bC} + \pmat{\cL^\bot_m \\ 0}$.
Consider arbitrary $z \in \ker^\bot \bB^\top$. 
Using $\image \bW = \cL_m^\bot$, we can represent $z$ as  $z = \pmat{\bA \\ \bC}\xi + \pmat{\PLb \\ 0}\eta$ for some $\xi, \eta$. 

The key technique of the proof is to decompose $z$ into three orthogonal components $z = \pmat{u \\ 0} + \pmat{v \\0} + \pmat{0 \\ w}$,
where $u = \PLb (\bA \xi + \PLb\eta)$, $v = \PL (\bA \xi + \PLb \eta) = \PL \bA \xi$ and $w = \bC \xi$.

The following relations trivially follow from the definition of the decomposition: 
\begin{subequations}
\begin{align}
  u &\in \cL^\bot_m \label{eq:u} \\
  v &\in \cL_m  \label{eq:v}\\
  u + v &\in \image \bA + \cL^\bot_m \label{eq:uv} \\
  w &\in \image \bC \label{eq:w}\\
  \pmat{v \\ w} &\in \image\pmat{\PL \bA \\ \bC} = \ker^\bot\pmat{ \bA^\top \PL & \bC^\top} \label{eq:vw}
\end{align}
\end{subequations}

Following \citep[Lemma~2]{yarmoshik2024decentralized} and using the relations above we bound $\sqn{\bB^\top z}$ as
\begin{align}
  \sqn{\bB^\top z} \aeq{is due to \eqref{eq:v}} \sqn{\pmat{\bA^\top (u + v) + \bC^\top w \\ \bW u}} 
  \ageq{is due to Young's inequality, \eqref{eq:u} and definitions of $L_A$, $\mu_W$ \eqref{eq:mat_Lmu}}
  -L_A \sqn{u} + \frac12\sqn{\bA^\top v + \bC^\top w} + \mu_W \sqn{u}, \label{eq:mu_B_main}
\end{align} 
where \annotate.

Now consider the second term in the rhs.
By \eqref{eq:v} we have $\bA^\top v = \bA^\top \PL v$.
Then, denoting $\bJ = \pmat{\PL \bA \\ \bC}$ and using \eqref{eq:vw} we obtain 
\begin{equation}
  \sqn{\bA^\top v + \bC^\top w} = \sqn{\bA^\top \PL v + \bC^\top w} \geq \mu_\bJ \sqn{\pmat{v \\ w}}.
\end{equation}

To estimate $\mu_\bJ$, we consider a vector $t \in \ker^\bot \bJ = \image \bA^\top \PL + \image \bC^\top$ and decompose it into orthogonal components as $t = y + q$, 
where $y = \PCb t$ and $q = \PC t$. Then we apply Young's inequality
\begin{equation}
  \sqn{\bJ t} = \sqn{\PL \bA(y +q)} + \sqn{\bC t} \geq -\smax^2(\PL \bA) \sqn{y} +  \frac12 \sqn{\PL \bA q} + \mu_C \sqn{y}.
\end{equation}
Let us bound the first and the second terms in the rhs.
First, $\sqn{\PL \bA y} = \sqn{\frac{1}{n}\one_n\otimes \bA' y} = \frac1n \sqn{\bA' y} \leq \frac1n \smax^2\cbr{\bA'} \sqn{y}$ for any $y$.
Since $\frac 1n \smax^2\cbr{\bA'} = \smax\cbr{\frac1n \one_n\sumin \bA_i \bA_i^\top} = \smax(\bS)$, we denote the coefficient by $L_S$.
Second, by definitions of $q$ and $t$ we have $q = \PC t \in \image \PC \bA^\top \PL  = \image \PC \bA'^\top = \ker^\bot \bA' \PC$.
Therefore, $\sqn{\PL \bA q} = \sqn{\frac1n \one_n \otimes \bA'q} = \sqn{\frac1n \one_n \otimes \bA' \PC q} \geq \frac1n \sminp^2\cbr{\bA' \PC} \sqn{q}= \muproj \sqn{q}$.
Here we allow $\muproj$ to be equal to zero if $\bA' \PC$ is the zero matrix.
Summarizing, we get
\begin{equation}
  \sqn{\bJ t} \geq -L_S \sqn{y} +  \frac12 \muproj \sqn{q} + \mu_C \sqn{y}.
\end{equation}

Now let us utilize the properties of scaling coefficients \eqref{eq:block_scaling}.
If $\muproj = 0$, we have $t = y$ and $\sqn{\bJ t} \geq \cbr{\mu_C - L_S} \sqn{t}$, thus ensuring $\mu_C \geq L_S + 2 L_A$ we get $\mu_J \geq 2L_A$.
Otherwise, if $\muproj > 0$, for $\mu_C \geq L_S + \frac12\muproj$ we obtain $\mu_J = \frac12 \muproj$, which cannot be further increased by scaling $\bC$ in this case.

Plugging this into \eqref{eq:mu_B_main} yields 
\begin{equation}
  \sminp^2(\bB) \geq 
    \begin{cases}
      L_A, & \muproj = 0, \\
      \frac14\muproj, & \muproj > 0,
    \end{cases}
\end{equation}
where we assume $\mu_W \geq L_A + \begin{cases}
  L_A, & \muproj = 0  \\
  \frac14\muproj, & \muproj > 0
\end{cases}$.

\textbf{Bounds for $\kappa_B$}

Finally, to simplify the expression for the condition number of $\bB$ we use the following bounds:
\begin{equation}
\begin{aligned}
  \muproj &= \frac1n \sminp^2(\bA' \PC) 
    \leq \frac1n \smax^2(\bA' \PC)
    \leq \frac1n \smax^2(\bA') \smax^2(\PC)
    \\&=
    \frac1n \smax^2(\bA') 
    \leq \frac1n \sumin \smax^2(\bA_i)
    \leq \smax^2(\bA) = L_A,
\end{aligned}
\end{equation}
and
\begin{equation}
  L_S
    = \frac1n \smax^2(\bA') 
    \leq L_A.
\end{equation}

For $\muproj > 0$ this gives
\begin{equation}
  \kappa_B = \frac{L_B}{\mu_B} \leq\frac{4L_A}{\muproj} + \frac{4L_A + \muproj}{\muproj} \frac{L_W}{\mu_W} + \frac{4L_S + 2\muproj}{\muproj}\frac{L_C}{\mu_C} = O\cbr{ \tkappa_{AC}\kappa_W + \tkappa_{AC} \kappa_C},
\end{equation}
and for $\muproj = 0$
\begin{equation}
  \kappa_B = \frac{L_B}{\mu_B} \leq \frac{L_A}{L_A} + \frac{L_A + L_A}{L_A} \frac{L_W}{\mu_W} + \frac{L_S + 2L_A}{L_A}\frac{L_C}{\mu_C} = O\cbr{\kappa_W + \kappa_C}.
\end{equation}

\newpage
\section{Proof of the Complexity Bounds for Coupled and Local Constraints (\Cref{thm:mixed_upper_bound})}\label{app:mixed_coupled_with_local_lower}
\subsection{The upper bound}\label{sec:mixed_upper}
Since coupled constraints require introducing auxiliary variable $y$ on which the objective function $F(\bx)$ does not depend, and therefore is not strongly convex with respect to the whole set of variables what does not allow to apply \Cref{alg:apapc}. 
Therefore we introduce a regularized/penalized (in the augmented Lagrangian fashion) objective
\begin{equation} \label{eq:coupled_constraints_reg_form}
  G(\bx, \by) = \sumin f_i(x_i) + \frac{r}{2} \sqn{\bA \bx + \alpha \bW \by - \bb}, \quad r = \frac{\mu_f}{2 L_A}.
\end{equation}
The next lemma shows that $G(\bx, \by)$ fixes the strong convexity problem.
\begin{lemma}[essentially {\citep[Lemma 1]{yarmoshik2024decentralized}}]\label[lemma]{lem:conv_smooth_c_and_l}
  $G(\bx, \by)$ is $(\mu_G = \mu_f/ 4)$-strongly convex on $\R^d \times \cY$ and $(L_G = 2L_f\kappa^2_W)$-smooth, where $\cY$ is the subspace of all $\by\in{R^{mn}}$ (recall that $A_i \in \R^{m\times d_i}$) such that $\<\by, \one_{mn}> = 0$.
\end{lemma}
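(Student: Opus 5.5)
The plan is to treat $G$ as the sum of the separable part $F(\bx)=\sumin f_i(x_i)$ and the convex quadratic $Q(\bx,\by)=\frac r2\sqn{\bA\bx+\alpha\bW\by-\bb}$. The Hessian of $Q$ is $r\,\bD^\top\bD$ with $\bD=\pmat{\bA & \alpha\bW}$. For both claims I would work with increments $(h,g)$, where $h\in\R^d$ and $g\in\cY$. This restriction is legitimate because $\cY$ is a linear subspace, so differences of admissible points stay in it. I will use the scaling $\alpha^2$ from \eqref{eq:block_scaling}, and the only property of it I need is $\alpha^2\mu_W\geq L_A$. I will also use that on $\cY=\cL_m^\bot=\image\bW$ one has $\sqn{\bW g}\geq\mu_W\sqn{g}$, where $\mu_W$ is the lower bound on the smallest positive squared singular value of $\bW$ from \eqref{eq:mat_Lmu}.

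\textbf{Strong convexity.} By $\mu_f$-strong convexity of each $f_i$, together with the exact second-order expansion of $Q$, it suffices to show
\[
\mu_f\sqn{h}+r\sqn{\bA h+\alpha\bW g}\;\geq\;\tfrac{\mu_f}{4}\cbr{\sqn{h}+\sqn{g}}.
\]
I would first apply Young's inequality in the form $\sqn{a+b}\geq\frac12\sqn{b}-\sqn{a}$. This gives
\[
\sqn{\bA h+\alpha\bW g}\geq \tfrac12\alpha^2\mu_W\sqn{g}-L_A\sqn{h}.
\]
Substituting $r=\mu_f/(2L_A)$, the left-hand side becomes at least
\[
\tfrac{\mu_f}{2}\sqn{h}+\tfrac{\mu_f\alpha^2\mu_W}{4L_A}\sqn{g}.
\]
Since $\alpha^2\mu_W\geq L_A$, this is at least $\frac{\mu_f}{2}\sqn{h}+\frac{\mu_f}{4}\sqn{g}$, which yields $\mu_G=\mu_f/4$. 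Note that restricting to $g\in\cY$ is essential here. Without it, $g\in\ker\bW$ would be a flat direction of $G$.

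\textbf{Smoothness.} The gradient of $G$ is Lipschitz with constant at most $L_f+r\,\smax^2(\bD)$. I would bound $\smax^2(\bD)\leq\smax^2(\bA)+\alpha^2\smax^2(\bW)\leq L_A+\alpha^2 L_W$. Then
\[
r\,\smax^2(\bD)\leq\tfrac{\mu_f}{2L_A}\cbr{L_A+\alpha^2L_W}=\tfrac{\mu_f}{2}+\tfrac{\mu_f}{2}\cdot\tfrac{\alpha^2\mu_W}{L_A}\cdot\tfrac{L_W}{\mu_W}.
\]
By \eqref{eq:block_scaling}, $\alpha^2\mu_W\leq 2L_A$, so the middle ratio is at most $2$. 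Since $L_W/\mu_W$ is a ratio of squared singular values of $W$, it is at most $\kappa_W^2$. Using $\mu_f\leq L_f$ and $\kappa_W\geq1$, the total is $O(L_f\kappa_W^2)$, which is the claimed $L_G=2L_f\kappa^2_W$ up to the absolute constant.

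The only delicate point I anticipate is bookkeeping rather than analysis. First, one must consistently use the squared-singular-value conventions for $\mu_W$ and $L_W$, which is what produces $\kappa_W^2$ rather than $\kappa_W$. Second, one must check that the exact constant $2$ in $L_G$ survives under the precise choice of $\alpha$; if it does not, the constant is simply adjusted, which does not affect the later complexity bounds. Everything else follows the argument of \citep[Lemma 1]{yarmoshik2024decentralized} verbatim.
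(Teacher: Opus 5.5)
Your strong-convexity argument is the paper's argument. It uses the same split $\sqn{a+b}\geq\frac12\sqn{b}-\sqn{a}$, the same $r=\mu_f/(2L_A)$ and $\alpha^2\mu_W\geq L_A$, and the same restriction of increments to $\ker^\bot\bW$. Your inequality is just twice the Bregman bound $D_G\geq\frac{\mu_f}{8}\sqn{\cdot}$ that the paper writes, which is also the right way to read it when the $f_i$ are not twice differentiable.

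One caveat, which you share with the paper: $\cY=\cL_m^\bot=\image\bW$ holds only if $\cY$ is read as $\{\by:\sumin y_i=0\}$. Under the literal condition $\langle\by,\one_{mn}\rangle=0$ with $m\geq2$, the vector $\one_n\otimes(e_1-e_2)\in\ker\bW$ lies in $\cY$ and is a flat direction of $G$.

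Where your proposal falls short is the constant in the smoothness claim. You bound the Lipschitz constant by $L_f+r\smax^2(\bD)$ and then use $\smax^2(\bD)\leq L_A+\alpha^2L_W$. This adds the $\bx$- and $\by$-contributions on top of each other, and your chain ends at $L_f+\frac{\mu_f}{2}+\mu_f\frac{L_W}{\mu_W}$. For $\mu_f=L_f$ and $L_W=\mu_W$ (e.g.\ a complete graph) this equals $\frac{5}{2}L_f>2L_f\kappa_W^2$, so the stated $L_G$ is not established.

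The paper keeps the blocks separate. With $h=\bx'-\bx$ and $g=\by'-\by$,
\[
2D_G\leq L_f\sqn{h}+r\cbr{2L_A\sqn{h}+2\alpha^2L_W\sqn{g}}=(L_f+\mu_f)\sqn{h}+\frac{\mu_f\alpha^2L_W}{L_A}\sqn{g}.
\]
Here $L_f+\mu_f\leq 2L_f$, and $\alpha^2\mu_W\leq 2L_A$ gives $\frac{\mu_f\alpha^2L_W}{L_A}\leq 2\mu_f\frac{L_W}{\mu_W}$. So the larger of the two coefficients is at most $2L_f\frac{L_W}{\mu_W}\leq 2L_f\kappa_W^2$.

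In short, take the maximum of the blockwise coefficients rather than adding $L_f$ to a bound on the whole quadratic. As you note, the weaker constant would not affect the later complexity bounds, but it is not the lemma as stated.
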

\begin{proof}
  Let $D_G(\bx', \by'; \bx, \by)$ denote the Bregman divergence of $G$:
	\begin{equation}\label{eq:bregman}
		D_G(\bx', \by'; \bx, \by) = G(\bx', \by') - G(\bx, \by) - \<\nabla_x G(\bx, \by), \bx' -\bx> - \<\nabla_y G(\bx, \by), \by' - \by>.
	\end{equation}

	The value of $\mu_G$ can be obtained as follows:
	\begin{align*}
		D_{G}(x',y';x,y)
		 & =
     D_F(x';x) + \frac{r}{2}\sqn{\bA(x'-x) + \alpha \bW (y'-y)}
		\\&\ageq{is due to Young's inequality}
		\frac{\mu_f}{2}\sqn{x'-x} + \frac{r}{4}\sqn{\alpha \bW(y'-y)} - \frac{r}{2}\sqn{\bA(x'-x)}
    \\&\ageq{is due to $y' -y \in \cY$ and  $\ker^\bot \bW = \cY$}
		\frac{\mu_f}{2}\sqn{x'-x} + \frac{r \alpha^2 \mu_\bW}{4}\sqn{y'-y} - \frac{rL_{\bA}}{2}\sqn{x'-x} 
		\\&\geq
		\frac{\mu_f}{4}\sqn{x'-x} + \frac{\mu_f\alpha^2 \mu_\bW}{8 L_{\bA}}\sqn{y'-y},
        \\&\ageq{is because $\alpha^2 \geq \frac{L_A}{\mu_W}$ by its definition in \eqref{eq:block_scaling}}
    \frac{\mu_f}{8} \left\|\pmat{x'-x \\ y' - y}\right\|^2,
	\end{align*}
	where \annotate.

	The value of $L_G$ can be obtained as follows:
	\begin{align*}
		D_{G}(x',y';x,y)
		 & =
		D_F(x';x) + \frac{r}{2}\sqn{\bA(x'-x) + \alpha \bW (y'-y)}
		\\&\aleq{is due to Young's inequality}
		\frac{L_f}{2}\sqn{x'-x} + r\sqn{\alpha \bW (y'-y)} + r\sqn{\bA(x'-x)}
    \\&\leq
		\frac{L_f}{2}\sqn{x'-x} + r \alpha^2 L_\bW \sqn{y'-y} + rL_{\bA}\sqn{x'-x}
		\\&\leq
		\frac{L_f + \mu_f}{2}\sqn{x'-x} + \frac{\mu_f\alpha^2 L_\bW}{2L_{\bA}}\sqn{y'-y},
    \\&\aleq{is because $\alpha^2 \leq 2\frac{L_A}{\mu_W}$ by its definition in \eqref{eq:block_scaling} and $\mu_f \leq L_f$}
    L_f\frac{L_\bW}{\mu_\bW}\left\|\pmat{x'-x \\ y' - y}\right\|^2,
	\end{align*}
	where \annotate.
\end{proof}

Thus we replace $\sumin f_i(x_i)$ with $G(\bx, \by)$ in the decentralized-friendly reformulation of problem~\eqref{prob:coupled_with_local}
\begin{align*}\label{prob:coupled_with_local}
	\min_{\bx, \by}~ &\sumin f_i(x_i)
  \\
  \text{s.t.}~ &\bB \pmat{\bx \\ \by} = \pmat{\bb \\ \beta \bc}, \quad \bB = \pmat{\bA & \alpha\bW\\ \beta\bC & 0}.
\end{align*}
Note, that this does not change the minimizer of the problem.

Next we apply Chebyshev's preconditioning (\Cref{subsec:chebyshev_acc}) and replace matrices $\bW$ and $\bC$ with $\bW' = P_W(\bW)$, $\kappa_{W'} = O(1)$ and $\bC' = P_C(\bC^\top\bC)$, $\kappa_{C'} = O(1)$ correspondingly so that $\bB = \pmat{\bA & \alpha\bW'\\ \beta\bC' & 0}$.
Then, by \Cref{lem:mixed_coupled_with_local_spectrum}, choosing $\alpha$ and $\beta$ as defined in \eqref{eq:block_scaling}, we obtain  $\kappa_B = O(\kappaproj \kappa_{W'} + \kappaproj \kappa_{C'})$ = $O(\kappaproj)$, and by \Cref{lem:conv_smooth_c_and_l} $\kappa_G = \frac{8L_f\kappa^2_{W'}}{\mu_f} = O(\kappa_f)$.

Finally, due to \Cref{thm:apapc}, iteration complexity of \Cref{alg:apapc} applied to the following equivalent reformulation of the problem  \eqref{prob:coupled_with_local}
\begin{equation}
  \min_{\mathbf{x} \in \mathbb{R}^d, \by \in \R^{mn}} G(\mathbf{x}, \by)
  \textrm{ s.t. } \bB' \pmat{\bx \\ \by} = \bb',
\end{equation}
where $\bB' = P_B(\bB^\top\bB)$, $\kappa_{\bB'} = O(1)$ and $\bb' = \frac{P_B(\bB^\top\bB)}{\bB^\top \bB}\bB^\top \pmat{\bb \\ \bc'}$, $\bc' = \beta \frac{P_C(\bC^\top\bC)}{\bC^\top \bC}\bC^\top \bc$ 
is $N = O(\sqrt{\kappa_G}\log \frac{1}{\eps}) =  O(\sqrt{\kappa_f}\log \frac{1}{\eps})$.

Each iteration of \Cref{alg:apapc} in this case requires $1$ computation of $\nabla f$,
$O(\deg P_{\bB}) = O(\sqrt{\kappa_{\bB}}) = O(\sqrt{\kappaproj})$ multiplications by $\bB, \bB^\top$, each requiring $O(1)$ multiplication by $\bA, \bA^\top$, $O(\deg P_C)= O(\sqrt{\kappa_C})$ multiplication by $\bC, \bC^\top$ and 
$O(\deg P_{W}) = O(\sqrt{\kappa_W})$ multiplications by $\bW$, i.e., communication rounds, what gives the first part of the theorem.

Note, that we correctly applied \Cref{lem:conv_smooth_c_and_l} here despite the condition $\by \in \cY$ because when being started from $\by =0$ \Cref{alg:apapc} will keep all iterates $\by^k$ within the subspace $\cY$ due to $\image \bW = \image \bW' = \cY$.

\subsection{The lower bound}
Our worst problem example falls in the case $\muproj > 0$.

We use the proof of the lower bound for coupled constraints from \citep[Theorem~2]{yarmoshik2024decentralized} combined with the idea of the two-level communication graph used in the proof of the lower bound for local constraints \citep[Theorem~2]{yarmoshik2024decentralized_local}.

Let $W\in \R^{n \times n}$ be defined as in \Cref{app:example_graph}.
Let $W_C$ be also a Laplacian matrix with $\frac{\lmax(W_C)}{\lminp(W_C)} = \kappa_C$ constructed as in \Cref{app:example_graph}, and denote the number of vertices in the corresponding path graph as $l$, so that $W_C \in \R^{l\times l}$.

Consider the following objective function
\begin{align}\label{eq:lower_bound_functions_f}
  f_i(p_i, t_i) = \frac{\mu_f}{2} \frac1l \sum_{j=1}^l \sqn{p_{ij} + \frac{\sqrt{L'_A}}{2\mu_f}e_1} + \frac{L_f}{2} \frac1l \sum_{j=1}^l \sqn{t_{ij}},
\end{align}
where $x_i = (p_i, t_i)$, $p_i$ and $t_i$ are $\in \ell_2^l$, and $e_1 = (1~ 0~ \ldots~ 0)^\top$ denote the first coordinate vector.
We immediately note that each $f_i$ is $L_f / l$-smooth and $\mu_f / l$-strongly convex, thus the condition number of $\sumin f_i(x_i)$ is indeed $\kappa_f = \frac{L_f}{\mu_f}$. 

Define $C_i = \pmat{\sqrt{W_C \otimes I_{\ell_2}} & 0 \\ 0 & \sqrt{W_C \otimes I_{\ell_2}}}$ and $b_i = 0$ for all $i \in \{1, \ldots, n\}$.
This gives the desired condition number: 
$\frac{\smax^2(C)}{\sminp^2(C)} = \frac{\lmax(W_C)}{\lminp(W_C)} = \kappa_C$.
The local constraint $C_i \pmat{p_i \\ t_i} = 0$ expresses two consensus constraints: indeed, the first block is equivalent to $(W_C \otimes I_{\ell_2}) p_i = 0$ and, in turn, to $p_{i,1} = \ldots = p_{i,l}$, since $\ker W_C = \{\alpha 1_l~|~\alpha \in \R\}$; same, the second block gives $t_{i,1} = \ldots = t_{i,l}$.

Recall, that, by construction, $n$ and $l$ are divisible by $3$.
We define index sets 
$\cV_1 = \braces{1, \ldots, n/3}, \cV_2 = \braces{n/3 + 1, \ldots, 2n/3}, \cV_3 = \braces{2n/3 + 1, \ldots, n}$,
and, similarly, 
$\cU_1 = \braces{1, \ldots, l/3}, \cU_2 = \braces{l/3 + 1, \ldots, 2l/3}, \cU_3 = \braces{2l/3 + 1, \ldots, l}$.
Let
  \begin{align*}
  &E_1 = \begin{pmatrix}
		1 & 0 & 0 & 0 & 0 & \ldots & \\
		0 & 1 & -1 & 0 & 0 & \ldots & \\
		0 & 0 & 0 & 0 & 0 & \ldots & \\
		0 & 0 & 0 & 1 & -1 & \ldots & \\
		\vdots & \vdots & \vdots & \vdots & \vdots & \ddots \\
	\end{pmatrix},~~~~
	E_2 = \begin{pmatrix}
		1 & -1 & 0 & 0 & 0 & \ldots & \\
		0 & 0 & 0 & 0 & 0 & \ldots & \\
		0 & 0 & 1 & -1 & 0 & \ldots & \\
		0 & 0 & 0 & 0 & 0 & \ldots & \\
		\vdots & \vdots & \vdots & \vdots & \vdots & \ddots & \\
	\end{pmatrix},
  \\
   &A_i = \begin{cases}
    \pmat{\diag(\delta_{\cU_1}) \otimes\sqrt{{L'}_A} E_1^\top & \diag(\delta_{\cU_1}) \otimes\sqrt{{\mu'}_{A}} I_{\ell_2^l}}, & i \in \cV_1 
       \\
       0, &i \in \cV_2
       \\
     \pmat{\diag(\delta_{\cU_3}) \otimes  \sqrt{{L'}_A} E_2^\top &\diag(\delta_{\cU_3}) \otimes   \sqrt{{\mu'}_{A}} I_{\ell_2^l}}, & i \in \cV_3 
   \end{cases} 
   ,
\end{align*}
where $\delta_{\cU_k} \in \R^l$ and 
$[\delta_{\cU_k}]_j = \begin{cases}
    1, &j \in \cU_k
    \\
    0, &\text{otherwise}
\end{cases}$.

Note, that if we presolve the local consensus constraints, we obtain exactly the same problem as in \citep[Appendix C.3]{yarmoshik2024decentralized}. %
Therefore, the solution to the problem considered here is the solution of the problem in \citep[Appendix C]{yarmoshik2024decentralized} copied $l$ times at each vertex.
By \Cref{def:projected_condition_number} 
\begin{equation}\label{eq:tkappaAap}
  \kappaproj = \frac{\underset{i=1,\ldots,n}{\max}\lmax(A_i A_i^\top)}{\frac1n\sminp^2\cbr{\bA' \PC}} 
\end{equation}
To estimate $\underset{i=1,\ldots,n}{\max}\lmax(A_i A_i^\top)$ we rearrange columns of $A_i$ so that it becomes a block-diagonal matrix
\begin{equation}\label{eq:Ai_rear}
   A_i = \begin{cases}
    \diag(\delta_{\cU_1}) \otimes \pmat{\sqrt{{L'}_A} E_1^\top & \sqrt{{\mu'}_{A}} I_{\ell_2^l}}, & i \in \cV_1 
       \\
       0, &i \in \cV_2
       \\
     \diag(\delta_{\cU_3}) \otimes \pmat{ \sqrt{{L'}_A} E_2^\top &  \sqrt{{\mu'}_{A}} I_{\ell_2^l}}, & i \in \cV_3 
   \end{cases}
   ~~=:~~
   \begin{cases}
    \diag(\delta_{\cU_1}) \otimes \bar A_1, & i \in \cV_1 
       \\
       0, &i \in \cV_2
       \\
     \diag(\delta_{\cU_3}) \otimes \bar A_2, & i \in \cV_3 
   \end{cases}
\end{equation}
and by the proof of \citep[Theorem 2]{yarmoshik2024decentralized}, maximal squared singular values of its blocks are upper bounded by $\smax^2(\bar A_k) \leq 2 L'_A + \mu'_A = L_A, ~k\in\{1,2\}$ as needed, if we set 
$L'_A = \frac{1}{2} L_A - \frac{9}{2} \mu_A,~ \mu'_A = 9\muproj$.

For the denominator of \eqref{eq:tkappaAap} we have 
$\ker C_i = \{(p_i, t_i):~ p_{i1} = \ldots = p_{il},~ t_{i1} = \ldots = ~t_{il}\}$,
$\bP_{\ker C_i} =\begin{pmatrix} 1 & 0 \\ 0 & 1 \end{pmatrix} \otimes \cbr{ \frac{1}{l} 1_l 1^\top_l \otimes I_{\ell_2}}$, thus, calculating $\bA'\PC$ and rearranging its columns as in \eqref{eq:Ai_rear} we obtain
\begin{equation}
  \bA'\PC = \begin{pmatrix} 1^\top_{|\cV_1|} \otimes \frac{1}{l} 1_{|\cU_1|}1^\top_{l} \otimes \bar A_1 & 0 & 0
    \\
    0 & 0 & 1^\top_{|\cV_2|} \otimes \frac{1}{l} 1_{|\cU_2|}1^\top_{l} \otimes \bar A_2 
  \end{pmatrix}.
\end{equation}
Consider, for example, the first nonzero diagonal block
\begin{equation}
  \frac1n\sminp^2\cbr{ 1^\top_{|\cV_1|} \otimes \frac{1}{l} 1_{|\cU_1|}1^\top_{l}\otimes \bar A_1}
  = \frac{ |\cV_1| |\cU_1| l}{n l^2} \sminp^2(\bar A_1) 
  = \frac19 \sminp^2(\bar A_1) 
  \geq \frac19 \mu'_A = \muproj.  
\end{equation}
The same holds for the block with $\bar A_2$, thus setting $\muproj = 1$ and $L_A = \kappaproj$ the condition number of $A$ is indeed $\kappaproj$.

With this we verified that the considered problem instance satisfies the assumption on the values of objective function and constraint matrices condition numbers. 
Now let us obtain the lower bounds for the oracle complexities of solving this problem instances by any first-order method.

The solution of the problem has the property that the number of nonzero components $\nu = \max \{k ~:~ \exists i, j:~ [p_{i, j}]_k > 0 \}$ in an approximate solution $x$ is lower bounded by the squared distance $\eps$ between $x$ and the exact solution $x^*$ as $\nu \geq \Omega\cbr{\sqrt{\kappa_A \kappa_f}\log\cbr{\frac1\eps}}$ \citep[Theorem 2]{yarmoshik2024decentralized}. 
Assuming without loss of generality that the starting point for the algorithm is zero, it is clear from the structure of the problem (and, especially, from the structure of $E_1$) that nodes in $\cV_1$  cannot increase the maximum number of nonzero components in any of $x_{i,j}$ by themselves for more than $1$ by performing any operation allowed for a first order method, i.e. primal/dual gradient computation, multiplications by $C_i, C_i^\top$, $A_i, A_i^\top$ and decentralized communication (as well as nodes in $\cV_3$; nodes in $\cV_2$ cannot increase $\nu$ by local operations).
Thus the information is needed to be transferred from $\cV_1$ to $\cV_3$ (or vice versa) in order to increase the number of nonzero components by $O(1)$ what costs $n/3 \geq \Omega\cbr{\sqrt{\kappa_W}}$ communication rounds.
This part of the proof coincides with the case of coupled constraints.

Now, the difference introduced with local constraints is that for nodes in $\cV_1$ multiplication by matrix $E_1$ (which is the only way for them to increase $\nu$) is performed only for $j \in \cU_1$, while $E_2$ applies to $j \in \cU_3$.
Therefore, any algorithm is required to transfer the information between $\cU_1$ and $\cU_3$ to increase $\nu$ by $1$, what costs $l/3 \geq \Omega\cbr{\sqrt{\kappa_C}}$ multiplications by $C_i, C_i^\top$.

Bringing this all together, we obtain
\begin{align}
   N_A &\geq \Omega \cbr{\sqrt{\kappaproj \kappa_f}\log\cbr{\frac1\eps}}
   \\
   N_W &\geq \Omega \cbr{\sqrt{\kappa_W \kappaproj \kappa_f}\log\cbr{\frac1\eps}}
   \\
   N_C &\geq \Omega \cbr{\sqrt{\kappa_C \kappaproj \kappa_f}\log\cbr{\frac1\eps}}
\end{align}

The bound on $N_{\nabla f}$ is obtained by taking the sum of the constructed objectives $f_i(x_i)$ with independent copies of Nestrov's worst functions $h_i(w_i)$ with appropriate parameters on each node (variables $w_i$ are new independent variables without any coupling between different nodes).

\newpage
\section{Proof of the Upper Bounds for Mixed Constraints (Theorem \ref{thm:upper_bound_coupled_plus_non_ident_local_constraints_prob})}
\label{app:upper_bound_coupled_plus_non_ident_local_constraints_prob}

Define 
$\mathbf{K} = \mathrm{diag}(\mathbf{B}, \widetilde{\mathbf{B}})$, where $\bB$ is the matrix of coupled and local constraints  
$\mathbf{B} = \begin{pmatrix} \mathbf{A} & \alpha W \otimes I_m \\ \beta \bC & 0 \end{pmatrix}$, 
$\mathbf{A} = \mathrm{diag}(A_1, \ldots, A_n)$,
and  $\widetilde{\bB}$ is the matrix of consensus and local constraints 
$\widetilde{\mathbf{B}} = \begin{pmatrix} \widetilde \bC \\ \gamma W \otimes I_d \end{pmatrix}$.

Since $\bK$ is block-diagonal, we obtain its preconditioning by separately preconditioning $\bB_1$ as described in \Cref{sec:mixed_upper}, and $\bB_2$ as described in \Cref{app:nonind_upper}.

\subsection{Non-identical local constraints}%

\begin{proof}

  We apply Chebyshev's preconditioning (\Cref{subsec:chebyshev_acc}) and replace matrices $W$ and $\bC$ with $W' = P_W(W)$, $\kappa_{W'} = O(1)$  and $\bC' = P_C(\bC^\top\bC)$, $\kappa_{C'} = O(1)$ respectively, so that $\bB_1 = \pmat{\bA & \alpha W' \otimes I_m \\ \beta \bC' & 0}$  and $\bB_2^\top = \pmat{\widetilde \bC^\top& \gamma W' \otimes I_d}$. 
Then we replace $\bB_1$ with $\bB'_1 = P_{\bB_1}(\bB_1^\top\bB_1)$, $\kappa_{\bB_1'} = O(1)$,
and $\bB_2$ with $\bB'_2 = P_{\bB_2}(\bB_2^\top\bB_2)$, $\kappa_{\bB_2'} = O(1)$.
This preconditioning gives us $\sminp(\bB'_1), \sminp(\bB'_2) \geq 11/15$ 
and $\smax(\bB'_1), \smax(\bB'_2) \leq 19/15$ {\citep[Section 6.3.2]{salim2022optimal}}
thus for $\bK = \diag(\bB'_1, \bB'_2)$ we get $\kappa_K = O(1)$.

To fix the strong convexity issues with coupled constraints (see \Cref{sec:mixed_upper}), we penalize/regularize the objective: $G(\bx, \tilde \bx, \by) = \sumin f_i(x_i, \tilde x_i) + \frac{r}{2}\sqn{\bA \bx + \alpha \bW'\by - \bb}$ .
By {\citep[Lemma 1]{yarmoshik2024decentralized}} for $r = \frac{\mu_f}{2 L_A}$ we have $\kappa_G = O(\kappa_f)$.

Now, due to \Cref{thm:apapc}, iteration complexity of \Cref{alg:apapc} applied to the following equivalent reformulation of problem \eqref{prob:opt_mixed_constraints_intro}
\begin{equation}
  \min_{\mathbf{x}, \tilde \bx, \by} G(\mathbf{x}, \tilde \bx, \by)
  \textrm{ s.t. } \bK \pmat{\bx \\ \by \\ \tilde \bx} = \pmat{\bb' \\ \bc'},
\end{equation}
where $\bb' = \frac{P_{\bB_1}(\bB_1^\top\bB_1)}{\bB_1^\top \bB_1}\bB_1^\top \pmat{\bb \\ \bc}$, $\bc' = \frac{P_{\bB_2}(\bB_2^\top\bB_2)}{\bB_2^\top \bB_2}\bB_2^\top \pmat{\tilde \bc\\ 0}$
is $N = O(\sqrt{\kappa_G}\log \frac{1}{\eps}) =  O(\sqrt{\kappa_f}\log \frac{1}{\eps})$.
Each iteration of \Cref{alg:apapc} in this case requires 
\begin{itemize}
  \item $1$ computation of $\nabla f$;
  \item $O(1)$ multiplications by $\bB'_1, \bB'^\top_1$ equivalent to $O(\deg P_{\bB_1}) = O(\sqrt{\kappa_{\bB_1}}) = O(\sqrt{\tkappa_{AC}})$ multiplications by $\bB_1, \bB_1^\top$, each requiring $O(1)$ multiplications by $\bA, \bA^\top$,  $O(\deg P_W)= O(\sqrt{\kappa_W})$ multiplications by $\bW$ (communications) and $O(\deg P_C) = O(\sqrt{\kappa_C})$ multiplications by $\bC$, $\bC^\top$; 
  \item $O(1)$ multiplications by $\bB_2, \bB'^\top_2$ equivalent to $O(\deg P_{\bB_2}) = O(\sqrt{\hkappa_{\widetilde \bC^\top}})$ multiplications by $ \bB_2, \bB_2 ^\top$ each requiring $O(1)$ multiplications by $\widetilde \bC, \widetilde \bC^\top$ and $O(\deg P_W)= O(\sqrt{\kappa_W})$ communications. 
\end{itemize}
Counting total number of matrix multiplications concludes the proof.
\end{proof}

\subsection{Identical local constraints}%
In this case we have $\widetilde \bC = I_n \otimes \tilde C$.
\begin{proof}
  The difference is in the application of Chebyshev's preconditioning: instead of taking polynomial of $\bB_2$ we simply replace matrix $\widetilde \bC$ with $\widetilde \bC' = P_C(\bC)$, $\kappa_{\widetilde C'} = O(1)$, what by \Cref{lem:local_spectrum} with $\gamma=1$ also gives that $\bB_2^\top = \pmat{I_n\otimes C'^\top& \gamma W' \otimes I_d}$ has $\sminp(\bB_2),\smax(\bB_2)$ both bounded as $\Theta(1)$. 
$\bB_1$, $W$ and objective function are treated the same way as in the case of non-identical constraints.

Each iteration of \Cref{alg:apapc} in this case requires 
\begin{itemize}
  \item (same as in the non-identical case) $1$ computation of $\nabla f$;
  \item (same as in the non-identical case) $O(1)$ multiplications by $\bB'_1, \bB'^\top_1$ equivalent to $O(\deg P_{\bB_1}) = O(\sqrt{\kappa_{\bB_1}}) = O(\sqrt{\tkappa_{A}})$ multiplications by $\bB_1, \bB_1^\top$, each requiring $O(1)$ multiplications by $\bA, \bA^\top$,  $O(\deg P_W)= O(\sqrt{\kappa_W})$ multiplications by $\bW$ (communications) and $O(\deg P_C) = O(\sqrt{\kappa_C})$ multiplications by $\bC$, $\bC^\top$; 
  \item (reduced complexity by $W$) $O(1)$ multiplications by $\bB_2, \bB'^\top_2$ each requiring $O(\deg P_{\widetilde \bC}) = O(\sqrt{\widetilde \kappa_{\bC}})$ multiplications by $\bC, \bC^\top$ and $O(\deg P_W)= O(\sqrt{\kappa_W})$  communications. 
\end{itemize}
\end{proof}

For the case when $\bC = 0$ we also give the following useful lemma, which allows to precisely estimate the condition number of matrix $\bB$ without applying Chebyshev's preconditioning, but only by scaling matrices with scalars.

\begin{lemma}
    Let $\widetilde C_1 = \ldots = \widetilde C_n = \widetilde C$. Denote $\bB = \diag(\alpha\bB_1, \bB_2)$, where $\bB_1 = \pmat{\bA & \beta W \otimes I_m}$, $\bA = \diag\cbr{A_1, \ldots, A_n}$ and $\bB_2^\top = (I_n\otimes \widetilde C^\top~~ \gamma W \otimes I_d)$.
    Also recall definitions of $S_A$ and $\hkappa_A$ from Definition \ref{def:mixed_condition_number}.
    Setting $\alpha^2 = \frac{2\sminp^2(\widetilde C)}{\lambda_{\min^+}(S_A)}$, $\beta^2 = \frac{\lambda_{\min^+}(S_A) + \smax^2(\bA)}{\sminp^2(W)}$ and $\gamma^2 = \frac{\sminp^2(\widetilde C)}{\sminp^2(W)}$, we obtain
    \begin{align*}
    	\smax^2(\bB) &\leq O\cbraces{\smax^2(\widetilde C) + \sminp^2(\widetilde C)\cdot \kappa_W^2 \hkappa_A}, \\
    	\sminp^2(\bB) &\geq \sminp^2(\widetilde C),
    \end{align*}
    thus $\kappa_{B} = O(\kappa_{\widetilde C} + \kappa^2_W \hkappa_A)$.
\end{lemma}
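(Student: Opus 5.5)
Since $\bB = \diag(\alpha\bB_1, \bB_2)$ is block diagonal, its squared singular values are the union of those of $\alpha\bB_1$ and of $\bB_2$. Hence
\begin{align*}
\smax^2(\bB) &= \max\cbr{\alpha^2\smax^2(\bB_1), \smax^2(\bB_2)}, \\
\sminp^2(\bB) &= \min\cbr{\alpha^2\sminp^2(\bB_1), \sminp^2(\bB_2)},
\end{align*}
and the proof reduces to bounding each block separately with the scalings given in the statement.

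\emph{Block $\bB_1$.} The scaling $\beta^2 = \frac{\lminp(S_A)+\smax^2(\bA)}{\sminp^2(W)}$ is exactly the one in \Cref{lem:coupled_spectrum}, with $\sminp(\bW)=\sminp(W)$ since $\bW = W\otimes I_m$. That lemma gives
\begin{align*}
\sminp^2(\bB_1) &\geq \tfrac12\lminp(S_A), \\
\smax^2(\bB_1) &\leq \smax^2(\bA) + (\smax^2(\bA)+\lminp(S_A))\kappa_W^2.
\end{align*}
Multiplying by $\alpha^2 = 2\sminp^2(\widetilde C)/\lminp(S_A)$ yields $\alpha^2\sminp^2(\bB_1)\geq \sminp^2(\widetilde C)$. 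For the upper bound I use $\lminp(S_A)\leq \smax^2(\bA)$: the average of the $A_iA_i^\top$ has spectral norm at most $\max_i\smax^2(A_i)=\smax^2(\bA)$. This gives $\alpha^2\smax^2(\bB_1)\leq O\cbr{\sminp^2(\widetilde C)\,\kappa_W^2\,\smax^2(\bA)/\lminp(S_A)}$. Since $\smax^2(\bA) = \max_i\lmax(A_iA_i^\top)$, this is $O(\sminp^2(\widetilde C)\kappa_W^2\hkappa_A)$.

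\emph{Block $\bB_2$.} Because $\bB_2^\top = (I_n\otimes\widetilde C^\top~~\gamma W\otimes I_d)$ is the identical-local-constraints matrix, I apply \Cref{lem:local_spectrum}. It gives $\smax^2(\bB_2)=\smax^2(\widetilde C)+\gamma^2\smax^2(W)$ and $\sminp^2(\bB_2)=\min\cbr{\sminp^2(\widetilde C),\gamma^2\sminp^2(W)}$. With $\gamma^2=\sminp^2(\widetilde C)/\sminp^2(W)$ the minimum equals $\sminp^2(\widetilde C)$. The maximum becomes $\smax^2(\widetilde C)+\sminp^2(\widetilde C)\kappa_W$, where $\kappa_W$ here is the ratio $\smax^2(W)/\sminp^2(W)$, i.e. the same convention as in \Cref{lem:coupled_spectrum}. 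This is $\leq \smax^2(\widetilde C)+\sminp^2(\widetilde C)\kappa_W^2\hkappa_A$, because $\kappa_W\geq1$ and $\hkappa_A\geq1$.

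Combining the two blocks gives both displayed bounds. Dividing the upper bound by $\sminp^2(\widetilde C)$ then gives $\kappa_\bB = O(\kappa_{\widetilde C}+\kappa_W^2\hkappa_A)$.

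The main care point is the block-diagonal singular-value claim for $\sminp$. It requires that every positive singular value of $\bB$ comes from a positive singular value of one of the blocks, which holds because $\bB^\top\bB$ is block diagonal. A second point is checking that the convention for $\kappa_W$ (squared singular values versus eigenvalues) is used consistently between \Cref{lem:coupled_spectrum} and \Cref{lem:local_spectrum}. Under either reading the bound stays inside $O(\cdot)$ because $\kappa_W\leq\kappa_W^2$.
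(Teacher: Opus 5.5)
Your proof is correct and follows the paper's argument. You split the block-diagonal $\bB$ into $\alpha\bB_1$ and $\bB_2$, bound $\bB_1$ via \Cref{lem:coupled_spectrum} and $\bB_2$ via \Cref{lem:local_spectrum}, then substitute $\alpha^2$ and $\gamma^2$ and combine the resulting max/min bounds. Your additional justifications, namely that $\lminp(S_A)\le\smax^2(\bA)$ and that the $\kappa_W$ versus $\kappa_W^2$ convention does not matter inside $O(\cdot)$, only make explicit steps the paper leaves implicit.
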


\begin{proof}
	Let us separately estimate the spectrum of $\bB_1$ and $\bB_2$. Set $\beta^2 = \frac{\lambda_{\min^+}(S_A) + \smax^2(\bA)}{\sminp^2(W)}$. According to Lemma \ref{lem:coupled_spectrum}, we have
	\begin{align*}
		\smax^2(\bB_1) &\leq \smax^2(\bA) + (\smax^2(\bA) + \lambda_{\min^+}(S_A)) \kappa_W^2, \\
		\sminp^2(\bB_1) &\geq \frac{\lambda_{\min^+}(S_A)}{2}.
	\end{align*}
	By Lemma \ref{lem:local_spectrum} we obtain
	\begin{align*}
		\smax^2(\bB_2) &= \smax^2(\widetilde C) + \gamma^2 \smax^2(W), \\
		\sminp^2(\bB_2) &= \min(\sminp^2(\widetilde C), \gamma^2\sminp^2(W)).
	\end{align*}
	Recalling the values of $\alpha^2$ and $\gamma^2$ from the statement of the lemma, we obtain the estimate on $\smax^2(\bB)$:
	\begin{align*}
		\smax^2(\bB)&\leq \max(\alpha^2\smax^2(\bB_1), \smax^2(\bB_2)) 
    \\ &=\max \left( 2\sminp^2(\widetilde C)\cbraces{\hat\kappa_A + (\hat\kappa_A + 1)\kappa_W^2},
    \smax^2(\widetilde C) + \sminp^2(\widetilde C)\kappa_W^2,\right) \\
		&= O\left(\smax^2(\widetilde C) + \sminp^2(\widetilde C) \kappa_W^2 \hat\kappa_A\right).
	\end{align*}
	Analogously we get a bound on $\sminp^2(\bB)$:
	\begin{align*}
		\sminp^2(\bB)&\geq \min(\alpha^2 \sminp^2(\bB_1), \sminp^2(\bB_2)) \\
		&= \min\cbraces{\frac{\alpha^2 \lambda_{\min^+}(S_A)}{2}, \sminp^2(\widetilde C), \gamma^2 \sminp^2(W)} \\
		&=\sminp^2(\widetilde C).
	\end{align*}
\end{proof}

\newpage
\section{Missing Proofs From \Cref{sec:smooth_convex_case}}\label{app:smooth_convex_results}

\begin{table}[H]
	\begin{center}
		\begin{small}
			\begin{sc}
				\begin{tabular}{ccc}
					\toprule
					Prob. & Oracle & Compl. \\
					\midrule
					\multirow{5}{*}{\shortstack{Coupled \\ constr. \\ \eqref{prob:coupled_constraints_prob}}} 
					& Grad. & $\sqrt{\frac{L_f R^2}{\varepsilon}}$ \\
					& Mat. & $\sqrt{\hat\kappa_{A}}\sqrt{\frac{L_f R^2}{\varepsilon}}$ \\
					& Comm. & $\sqrt{\kappa_{W}}\sqrt{\hat\kappa_{A}}\sqrt{\frac{L_f R^2}{\varepsilon}}$ \\
					& Paper & This paper, Th.~\ref{thm:upper_bounds_mixed_smooth_nonstrongly_convex} \\
					\midrule
                    \multirow{4}{*}{\shortstack{Shared \\ var.\\constr. \\ \eqref{prob:local_non_ident_constraints_on_shared_var_prob}}}
					& Grad. & $\sqrt{\frac{L_f R^2}{\varepsilon}}$ \\
					& Mat. $\widetilde C$& $ \sqrt{\hat{\kappa}_{\widetilde{C}^\top}} \sqrt{\frac{L_f R^2}{\varepsilon}}$ \\
					& Comm. & $\sqrt{\hat\kappa_{\widetilde{C}^\top}} \sqrt{\kappa_W}  \sqrt{\frac{L_f R^2}{\varepsilon}}$ \\
					& Paper & This paper, Th.~\ref{thm:upper_bounds_mixed_smooth_nonstrongly_convex} \\
                    \midrule
					\multirow{5}{*}{\shortstack{Local \\ var. \\constr. \\ \eqref{prob:coupled_with_local}}}
					& Grad. &  $\sqrt{\frac{L_f R^2}{\varepsilon}}$ \\
					& Mat. $A$ & $\sqrt{\kappaproj}\sqrt{\frac{L_f R^2}{\varepsilon}}  $ \\
					& Mat. $C$ & $\sqrt{\kappaproj} \sqrt{\kappa_{C}}  \sqrt{\frac{L_f R^2}{\varepsilon}}$ \\
					& Comm. & $\sqrt{\kappaproj} \sqrt{\kappa_W}  \sqrt{\frac{L_f R^2}{\varepsilon}}$ \\
					& Paper & This paper, Th.~\ref{thm:upper_bounds_mixed_smooth_nonstrongly_convex} \\
                    \midrule
                    \multirow{6}{*}{\shortstack{General \\ Mixed \\ constr. \\ \eqref{prob:opt_mixed_constraints_intro}}}
					& Grad. & $\sqrt{\frac{L_f R^2}{\varepsilon}}$ \\
					& Mat. $A$ & $\sqrt{\kappaproj}\sqrt{\frac{L_f R^2}{\varepsilon}}  $  \\
					& Mat. $C$ & $\sqrt{\kappaproj} \sqrt{\kappa_{C}}  \sqrt{\frac{L_f R^2}{\varepsilon}}$ \\
                    & Mat. $\widetilde{C}$ &  $ \sqrt{\hat{\kappa}_{\widetilde{C}^\top}} \sqrt{\frac{L_f R^2}{\varepsilon}}$  \\
					& Comm. & $\left(\sqrt{\kappaproj}+ \sqrt{\hat\kappa_{\widetilde{C}^\top}}\right) \sqrt{\kappa_W} \sqrt{\frac{L_f R^2}{\varepsilon}} $ \\
					& Paper & This paper, Th.~\ref{thm:upper_bounds_mixed_smooth_nonstrongly_convex} \\
					\bottomrule
				\end{tabular}
			\end{sc}
		\end{small}
	\end{center}
	\caption{Convergence rates for decentralized smooth convex optimization with constraints. Constants $ \hat{\kappa}_A, \kappaproj, \hat\kappa_{C^\top}, \kappa_{C}$ are defined similarly to Table \ref{table:smooth_str_convex_optimization_affine_constraints}. The term $\log\left(\frac{1}{\varepsilon}\right)$ is omitted.}
	\label{table:smooth_conv_optimization_constraints}
\end{table}

\subsection{Proof of \Cref{thm:upper_bounds_mixed_smooth_nonstrongly_convex}}
From \Cref{thm:smooth_objectives_apapc_convergence}, we know that to achieve $\varepsilon$-solution, Algorithm~\ref{alg:apapc} requires
 $N_{\nabla f} = \mathcal{O}\left(\sqrt{\frac{LR^2}{\varepsilon}} \log \left( \frac{1}{\varepsilon} \right)\right)$ gradient computations and $N_\bB = \mathcal{O}\left(\sqrt{\frac{LR^2}{\varepsilon}} \sqrt{\kappa_\bB} \log \left( \frac{1}{\varepsilon} \right)\right)$ matrix multiplications with $\bB$ and $\bB^\top$.

Follow the Lemma~\ref{lem:mixed_coupled_with_local_spectrum} with $\alpha$, $\beta$ are chosen as in \eqref{eq:block_scaling}, we obtain
\begin{equation*}
      \kappa_\bB = O\cbr{\kappaproj\kappa_W+ \kappaproj \kappa_C}.
\end{equation*}
 The Chebyshev preconditioning for $\bB$ requires $O(\sqrt{\kappaproj})$ multiplications by $\bA$, $\bA^\top$, $O(\sqrt{\kappaproj} \sqrt{\kappa_C} )$ multiplications by $\bC$, $\bC^\top$, and $O(\sqrt{\kappaproj} \sqrt{\kappa_W} )$ communication rounds.
 
Applying Lemma~\ref{lem:coupled_spectrum} to the matrix $\widetilde{\bB}^\top = (\widetilde{\bC}^\top\;\; \gamma \bW)$ with $\gamma^2 = \frac{\lminp(\bS_{\widetilde{C}^\top}) + \smax^2(\widetilde{\bC}^\top)}{\sminp^2(\bW)}$, we obtain
\[\kappa_{\widetilde{\bB}} = \kappa_{\widetilde{\bB}^\top} \leq 2 \hat{\kappa}_{\widetilde{C}^\top} + 2 \left(\hat{\kappa}_{\widetilde{C}^\top} + 1\right) \kappa_W^2.\]

 Then when we apply Chebyshev acceleration for matrix $\widetilde{\bB}$, it requires to perform $O\left(\sqrt{\hat{\kappa}_{\widetilde{C}^\top}}\right)$ multiplications by $\widetilde{\bC}, \widetilde{\bC}^\top$ and $O\left(\sqrt{\kappa_W} \sqrt{\hat{\kappa}_{\widetilde{C}^\top}}\right)$ multiplications by $\bW$. 

 Therefore, in total, when the Chebyshev preconditioning for $\bK$ requires $O(\sqrt{\kappaproj})$ multiplications by $\bA$, $\bA^\top$, $O(\sqrt{\kappaproj} \sqrt{\kappa_C} )$ multiplications by $\bC$, $\bC^\top$, $O\left(\sqrt{\hat{\kappa}_{\widetilde{C}^\top}}\right)$ multiplications by $\widetilde{\bC}, \widetilde{\bC}^\top$ and $O\left((\sqrt{\kappaproj} + \sqrt{\hat{\kappa}_{\widetilde{C}^\top}})\sqrt{\kappa_W} \right)$ communication rounds. From this we derived the results in the Table~\ref{table:smooth_conv_optimization_constraints}.

 When local matrices $\widetilde{C}_i$ are equal, for the block $\widetilde{\bB}$, preconditioning requires $O(\sqrt{\kappa_C})$ multiplications by $\bC$, $\bC^\top$ and $O(\sqrt{\kappaproj}\sqrt{\kappa_W})$ communication rounds. In that case, the complexities $N_C$ and $N_W$ change to
  \begin{align*}
       N_{\widetilde C} &= O\cbraces{\sqrt{\kappa_{\widetilde C}} \sqrt{\frac{L_f R^2}{\eps}}  \log\cbraces{\frac1\eps}} \\
         N_W &= O\cbraces{ \sqrt{\kappaproj} \sqrt{\kappa_W} \sqrt{\frac{L_f R^2}{\eps}}  \log\cbraces{\frac1\eps}}.
     \end{align*}

\newpage

\section{Missing Proofs From \Cref{sec:nonsmooth_convex_case}}\label{app:nonsmooth_convex_results}

\begin{table}[H]
	\vskip 0.15in
	\begin{center}
		\begin{small}
			\begin{sc}
				\begin{tabular}{ccc}
					\toprule
					Prob. & Oracle & Compl. \\
					\midrule
					\multirow{5}{*}{\shortstack{Coupled \\ constr. \\ \eqref{prob:coupled_constraints_prob}}} 
					& Grad. & $\frac{M_f^2 R^2}{\eps^2}$ \\
					& Mat. & $\sqrt{\hkappa_A}\frac{M_f R}{\eps}$ \\
					& Comm. & $\sqrt{\kappa_W}\sqrt{\hkappa_A}\frac{M_f R}{\eps}$ \\
					& Paper & This paper, Th.~\ref{thm:nonsmooth_convex_upper_bounds}\\
					\midrule
                    \multirow{4}{*}{\shortstack{Shared \\ var.\\ constr. \\ \eqref{prob:shared_var_constraints}}}
					& Grad. & $\frac{M_f^2 R^2}{\eps^2}$ \\
					& Mat. $\widetilde{C}$ & $\sqrt{\hkappa_{\widetilde{C}^\top}}\frac{M_f R}{\eps}$ \\
					& Comm. & $\sqrt{\hat\kappa_{\widetilde{C}^\top}} \sqrt{\kappa_W}\frac{M_f R}{\eps}$ \\
					& Paper & This paper, Th.~\ref{thm:nonsmooth_convex_upper_bounds} \\
					\midrule
					\multirow{5}{*}{\shortstack{Local\\ var. \\ constr. \\ \eqref{prob:coupled_with_local}}}
					& Grad. & $\frac{M_f^2 R^2}{\eps^2}$ \\
					& Mat. $A$ & $ \sqrt{\kappaproj} \frac{M_f R}{\eps}$ \\
					& Mat. $C$ & $\sqrt{\kappaproj} \sqrt{\kappa_{C}}  \frac{M_f R}{\eps}$ \\
					& Comm. & $\sqrt{\kappaproj} \sqrt{\kappa_W} \frac{M_f R}{\eps}$ \\
					& Paper & This paper, Th.~\ref{thm:nonsmooth_convex_upper_bounds} \\
                    \midrule
                    \multirow{6}{*}{\shortstack{Mixed \\ constr. \\ \eqref{prob:opt_mixed_constraints_intro}}}
					& Grad. & $\frac{M_f^2 R^2}{\eps^2}$ \\
					& Mat. $A$ & $ \sqrt{\kappaproj} \frac{M_f R}{\eps}$ \\
					& Mat. $C$ & $\sqrt{\kappaproj} \sqrt{\kappa_{C}}  \frac{M_f R}{\eps}$ \\
                    & Mat. $\widetilde{C}$ & $\sqrt{\hkappa_{\widetilde{C}^\top}}\frac{M_f R}{\eps}$ \\
					& Comm. & $\left(\sqrt{\kappaproj} + \sqrt{\hat\kappa_{\widetilde{C}^\top}}\right) \sqrt{\kappa_W} \frac{M_f R}{\eps}$ \\
					& Paper & This paper, Th.~\ref{thm:nonsmooth_convex_upper_bounds} \\
					\bottomrule
				\end{tabular}
			\end{sc}
		\end{small}
	\end{center}
	\caption{Convergence rates for decentralized nonsmooth and (non-strongly) convex case.}
	\label{table:nonsmooth_conv_optimization_constraints}
\end{table}

\subsection{Proof of \Cref{thm:nonsmooth_convex_upper_bounds}}

Follow the \Cref{thm:upper_bound_nonsmooth_affine_constraints}, the penalized reformulation of problem \eqref{eq:mixed_constraints_reformulation} can be solved by gradient sliding with $N_\bK = O\cbr{\frac{M_fR}{\eps} \sqrt{\kappa_\bK}}$ multiplications by $\bK, \bK^\top$ and $N_{\nabla F} = O\cbr{\frac{M_f^2R^2}{\eps^2} + N_\bK}$ calls of subgradient of $F$.

As shown in Appendix~\ref{app:smooth_convex_results},  the Chebyshev preconditioning for $\bK$ requires $O(\sqrt{\kappaproj})$ multiplications by $\bA$, $\bA^\top$, $O(\sqrt{\kappaproj} \sqrt{\kappa_C} )$ multiplications by $\bC$, $\bC^\top$, $O\left(\sqrt{\hat{\kappa}_{\widetilde{C}^\top}}\right)$ multiplications by $\widetilde{\bC}, \widetilde{\bC}^\top$ and $O\left((\sqrt{\kappaproj} + \sqrt{\hat{\kappa}_{\widetilde{C}^\top}})\sqrt{\kappa_W} \right)$ communication rounds. From this we derived the results in the Table~\ref{table:nonsmooth_conv_optimization_constraints}.

\newpage
\section{Missing Proofs From \Cref{sec:nonsmooth_strongly_convex_case}} \label{app:nonsmooth_strongly_convex_results}

\begin{table}[H]
	\vskip 0.15in
	\begin{center}
		\begin{small}
			\begin{sc}
				\begin{tabular}{ccc}
					\toprule
					Prob. & Oracle & Compl. \\
					\midrule
					\multirow{5}{*}{\shortstack{Coupled \\ constr. \\ \eqref{prob:coupled_constraints_prob}}} 
					& Grad. & $\frac{M_f^2}{\mu_f\eps}$ \\
					& Mat. $A$ & $\sqrt{\hkappa_A}\frac{M_f}{\sqrt{\mu_f\eps}}$ \\
					& Comm. & $\sqrt{\kappa_W}\sqrt{\hkappa_A}\frac{M_f}{\sqrt{\mu_f\eps}}$ \\
					& Paper & This paper, Th.~\ref{thm:nonsmooth_strongly_convex_upper_bounds}  \\
					\midrule
                    \multirow{4}{*}{\shortstack{Shared \\ var. \\ constr. \\ \eqref{prob:shared_var_constraints}}}
					& Grad. & $\frac{M_f^2}{\mu_f\eps}$ \\
					& Mat. $\widetilde C$ & $\sqrt{\hkappa_{\widetilde{C}^\top}}\frac{M_f}{\sqrt{\mu_f\eps}}$ \\
					& Comm. & $\sqrt{\hat\kappa_{\widetilde{C}^\top}}\sqrt{\kappa_W}\frac{M_f}{\sqrt{\mu_f\eps}}$ \\
					& Paper & This paper, Th.~\ref{thm:nonsmooth_strongly_convex_upper_bounds} \\
					\midrule
					\multirow{5}{*}{\shortstack{Local \\ var. \\ constr. \\ \eqref{prob:coupled_with_local}}}
					& Grad. & $\frac{M_f^2}{\mu_f\eps}$ \\
					& Mat. $A$ & $\sqrt{\kappaproj} \frac{M_f}{\sqrt{\mu_f\eps}}$ \\
					& Mat. $C$ & $\sqrt{\kappaproj} \sqrt{\kappa_{C}}  \frac{M_f}{\sqrt{\mu_f\eps}}$ \\
					& Comm. & $\sqrt{\kappaproj} \sqrt{\kappa_W} \frac{M_f}{\sqrt{\mu_f\eps}}$ \\
					& Paper & This paper, Th.~\ref{thm:nonsmooth_strongly_convex_upper_bounds} \\
                    \midrule
                    \multirow{6}{*}{\shortstack{ Mixed \\ constr. \\ \eqref{prob:opt_mixed_constraints_intro}}}
					& Grad. & $\frac{M_f^2}{\mu_f\eps}$ \\
					& Mat. $A$ & $\sqrt{\hat\kappa_A} \frac{M_f}{\sqrt{\mu_f\eps}}$ \\
					& Mat. $C$ & $\sqrt{\kappaproj} \sqrt{\kappa_{C}}  \frac{M_f}{\sqrt{\mu_f\eps}}$ \\
                    & Mat. $\widetilde C$ & $\sqrt{\hkappa_{\widetilde{C}^\top}}\frac{M_f}{\sqrt{\mu_f\eps}}$ \\
					& Comm. & $\left(\sqrt{\kappaproj} + \sqrt{\hat\kappa_{\widetilde{C}^\top}}\right) \sqrt{\kappa_W} \frac{M_f}{\sqrt{\mu_f\eps}}$ \\
					& Paper & This paper, Th.~\ref{thm:nonsmooth_strongly_convex_upper_bounds} \\
					\bottomrule
				\end{tabular}
			\end{sc}
		\end{small}
	\end{center}
	\caption{Convergence rates for decentralized nonsmooth and strongly convex optimization with different types of affine constraints.}
	\label{table:nonsmooth_strconv_optimization_constraints}
\end{table}

In the strongly convex and non-smooth setting, we consider the problem~\eqref{eq:coupled_constraints_org} on a bounded set $\mathcal{X} \times \widetilde{\mathcal{X}}$, where $\mathcal{X} = X_1 \times \dots \times X_n$, otherwise Assumption~\ref{assum:bounded_gradient} and Assumption~\ref{assum:strongly_convex} with $\mu > 0$ cannot be held simultaneously. 
If a first-order algorithm use $\by^0 = \mathbf 0$ as a starting point, then its iterates $\by^k$ will belong to the linear subspace $\mathcal{L}_m^\perp$. Hence we can restrain the search space on this subspace and rewrite the penalized formulation of the problem~\eqref{eq:mixed_constraints_reformulation} as follows

\begin{equation} \label{eq:mixed_constraints_nonsmooth_penalized}
    \min_{\bx\in \mathcal{X},\, \by \in \mathcal{L}_m^\perp,\,
    \tilde{\bx} \in \widetilde{\mathcal{X}}
    } G(\bx, \by, \tilde{\bx})= F(\bx, \tilde{\bx}) + \frac{r^2}{\varepsilon} \sqn{\bK \pmat{\bx \\ \by\\ \tilde{\bx}} - \bv}. %
\end{equation}

\subsection{Proof of Lemma~\ref{lem:strong_conv_G_penal}} \label{app:proof_strong_conv_G_penal}

    Suppose that Assumption~\ref{ass:nonsmooth_and_str_convex_of_f_i} holds with $\mu_f \geq 0 $. Let $\alpha$ and $\varepsilon$ satisfy following conditions:
    \begin{equation} \label{eq:strcv_non_smooth_coupled_constraints_constants_condition}
        \alpha^2 = \frac{\mu_{\bA} + L_{\bA}}{\mu_{\bW}}, \quad \varepsilon \leq \frac{4 r^2 \mu_\bA}{\mu_f}.
    \end{equation}

Let \begin{equation} \label{eq:delta_def}
    \delta = \frac{1}{1+\frac{\mu_f \varepsilon}{4r^2 L_\bA}} \in (0, 1).
\end{equation}
For any $\bz = \col(\bx, \by, \tilde{\bx})$ and $\bz^\prime = \col(\bx^\prime, \by^\prime, \tilde{\bx}^\prime)$, where $\bx, \bx^\prime \in \mathcal{X}$, $\by, \by^\prime \in \mathcal{L}_m^\perp$ and $\tilde{\bx}, \tilde{\bx}^\prime \in \widetilde{\mathcal{X}}$, we have
\begin{align*}
		D_{G}(\bx^\prime,\by^\prime, \tilde{\bx}^\prime;\bx,\by, \tilde{\bx})
		 & = 
     D_F(\bx^\prime, \tilde{\bx}^\prime;\bx, \tilde{\bx}) + \frac{r^2}{\varepsilon}\normsq{\bK(\tilde{\bz}^\prime - \bz)}\\
     &\ageq{is due to Assumption~\ref{assum:strongly_convex} and definition of $\bK$} \frac{\mu_f}{2}\normsq{\bx^\prime-\bx} +  \frac{\mu_f}{2}\normsq{\tilde{\bx}^\prime-\tilde{\bx}} + \frac{r^2}{\varepsilon}\normsq{\bA(\bx^\prime-\bx) + \gamma \bW (\by^\prime-\by)}\
		\\&\ageq{is due to Young's inequality}
		\frac{\mu_f}{2}\normsq{\bx'-\bx} +  \frac{\mu_f}{2}\normsq{\tilde{\bx}^\prime-\tilde{\bx}}  + \frac{r^2(1-\delta)} {\varepsilon}\normsq{\gamma \bW(\by'-\by)} - \frac{r^2}{\varepsilon} \left( \frac{1}{\delta} - 1\right)\normsq{\bA(\bx'-\bx)}
    \\&\ageq{is due to $y' -y \in \mathcal{L}_m^\perp$}
		\frac{\mu_f}{2}\normsq{\bx'-\bx} +  \frac{\mu_f}{2}\normsq{\tilde{\bx}^\prime-\tilde{\bx}}  + \frac{r^2 (1 - \delta) \gamma^2 \mu_\bW}{\varepsilon}\normsq{\by'-\by} - \frac{r^2 L_{\bA}}{\varepsilon} \left( \frac{1}{\delta} - 1\right) \normsq{\bx'-\bx} 
		\\&\aeq{is due substitution of $\delta$ in \eqref{eq:delta_def} and $\alpha^2$ in \eqref{eq:strcv_non_smooth_coupled_constraints_constants_condition}}
		\frac{\mu_f}{4}\normsq{\bx'-\bx} +  \frac{\mu_f}{2}\normsq{\tilde{\bx}^\prime-\tilde{\bx}}  + \frac{r^2 \mu_f (\mu_\bA + L_\bA)}{4 r^2 L_\bA + \mu_f \varepsilon}\normsq{\by'-\by}
        \\&\ageq{is due to the upper bound on $\varepsilon$ in \eqref{eq:strcv_non_smooth_coupled_constraints_constants_condition}}
    \frac{\mu_f}{4}\normsq{\bx'-\bx} +  \frac{\mu_f}{2}\normsq{\tilde{\bx}^\prime-\tilde{\bx}}  + \frac{r^2 \mu_f (\mu_\bA + L_\bA)}{4 r^2 L_\bA + 4r^2 \mu_\bA}\normsq{\by'-\by}
    \\&=\frac{\mu_f}{4} \left\|\pmat{\bx'-\bx \\ \by' - \by}\right\|^2 +  \frac{\mu_f}{2}\normsq{\tilde{\bx}^\prime-\tilde{\bx}}\\
    &> \frac{\mu_f}{4}\normsq{\bz^\prime-\bz}, 
	\end{align*}
	where \annotate.

\subsection{Proof of \Cref{thm:nonsmooth_strongly_convex_upper_bounds}} \label{app:proof_nonsmooth_strongly_convex_upper_bounds}

    Consider the problem~\eqref{eq:mixed_constraints_reformulation} in the form \eqref{eq:mixed_constraints_nonsmooth_penalized}, where $F$ satisfies Assumption~\ref{ass:nonsmooth_and_str_convex_of_f_i} with $\mu_f > 0$, $\beta, \gamma$ are defined as in Section~\ref{sec:optimal_alg_Smooth_Str_Convex} and $r$, $\alpha$, $\varepsilon$ satisfy following conditions: 
    \begin{equation}
    \label{eq:strcv_non_smooth_coupled_constraints_constants_condition_thm}
        r \leq \frac{M}{\sminp(\bB)}, \quad \alpha^2 = \frac{\mu_{\bA} + L_{\bA}}{\mu_{\bW}}, \quad \varepsilon \leq \frac{4 r^2 \mu_\bA}{\mu_f}.
    \end{equation}

As shown in Lemma~\ref{lem:strong_conv_G_penal}, $G(\bx, \by, \tilde{\bx})$ is $\frac{\mu_f}{2}$-strongly convex on $\mathcal{X} \times \mathcal{L}_m^\perp \times \widetilde{\mathcal{X}}$.
Applying the \textsc{Gradient Sliding} with restarting procedure to the problem~\eqref{eq:mixed_constraints_nonsmooth_penalized} and using the \Cref{thm:upper_bound_nonsmooth_affine_constraints}, we obtain that an \(\varepsilon\)-solution can be found using $N_K = O\left(\frac{M_f}{\sqrt{\mu_f \varepsilon}} \sqrt{\kappa_{\bK}}\right)$
multiplications by $\bK$ and $\bK^\top$, and
$
N_{\nabla F} = O\left(\frac{M_f^2}{\mu_f \varepsilon} + N_K\right)$
calls to a subgradient oracle of $F$.  Chebyshev preconditioning for $\bK$ requires $O(\sqrt{\kappaproj})$ multiplications by $\bA$, $\bA^\top$, $O(\sqrt{\kappaproj} \sqrt{\kappa_C} )$ multiplications by $\bC$, $\bC^\top$, $O\left(\sqrt{\hat{\kappa}_{\widetilde{C}^\top}}\right)$ multiplications by $\widetilde{\bC}, \widetilde{\bC}^\top$ and $O\left((\sqrt{\kappaproj} + \sqrt{\hat{\kappa}_{\widetilde{C}^\top}})\sqrt{\kappa_W} \right)$ communication rounds. Then we receive a matrix $\bK^\prime$, for which $\kappa_{\bK^\prime}=O(1)$. Hence $N_{K^\prime} = O\left(\frac{M}{\sqrt{\mu_f \varepsilon}} \right)$. From this we derived the results in the Table~\ref{table:nonsmooth_strconv_optimization_constraints}.

\end{document}

%% file: header.tex
\usepackage[english]{babel}
\usepackage{hyperref}       %
\usepackage{url}            %
\usepackage{booktabs}       %
\usepackage{amsfonts}       %
\usepackage{tablefootnote}
\usepackage{verbatim}
\usepackage{nicefrac}       %
\usepackage{microtype}      %
\usepackage{lipsum}
\usepackage{mathtools}
\usepackage{amsmath,amssymb,amsthm}
\usepackage{algorithm,algorithmic}
\usepackage{pifont}
\usepackage{cases}
\usepackage{subcaption,graphicx}
\usepackage{stackengine}    %
\usepackage{wrapfig}
\usepackage{enumitem}
\usepackage{multirow}
\usepackage{xcolor}
\usepackage{array}
\usepackage{cleveref}
\usepackage[textsize=tiny]{todonotes}

\newcommand{\tkappa}{\widetilde{\kappa}}
\newcommand{\hkappa}{\hat{\kappa}}

\newcommand{\Nnabla}{N_{\nabla f}}
\newcommand{\Na}{N_A}
\newcommand{\Nc}{N_C}
\newcommand{\Nw}{N_W}

\usepackage{aliascnt}
\newaliascnt{problem}{equation}
\makeatletter
\def\incr@probnum{\refstepcounter{problem}\let\incr@probnum\@empty}
\newenvironment{problem}{
	\incr@probnum
	\mathdisplay@push
	\st@rredfalse \global\@eqnswtrue
	\mathdisplay{equation}
}{
	\endmathdisplay{equation}
	\mathdisplay@pop
	\ignorespacesafterend
}
\makeatother

\newcounter{annotatecount}
\newcounter{annotateidx}
\newcounter{annotatejdx}
\newcounter{annotatelabelcount}

\newcommand{\atran}[2]{\stepcounter{annotatecount}\overset{(\alph{annotatecount})}{#1}\csgdef{annotatedescription\theannotatecount}{#2}}

\newcommand{\aeq}[1]{\atran{=}{#1}}
\newcommand{\aleq}[1]{\atran{\leq}{#1}}
\newcommand{\ageq}[1]{\atran{\geq}{#1}}

\newcommand{\annotateinitused}{\setcounter{annotateidx}{0}\whileboolexpr{test{\ifnumless{\theannotateidx}{\theannotatecount}}}{\stepcounter{annotateidx}\csgdef{aused\theannotateidx}{0}}}
\newcommand{\annotategetlabels}{\setcounter{annotatejdx}{0}\setcounter{annotatelabelcount}{0}\whileboolexpr{test{\ifnumless{\theannotatejdx}{\theannotatecount}}}{\stepcounter{annotatejdx}\ifcsequal{annotatedescription\theannotateidx}{annotatedescription\theannotatejdx}{\csgdef{aused\theannotatejdx}{1}\stepcounter{annotatelabelcount}\csedef{annotatelabel\theannotatelabelcount}{(\alph{annotatejdx})}}{}}}
\newcommand{\annotateprintlabels}{\setcounter{annotatejdx}{0}\whileboolexpr{test{\ifnumless{\theannotatejdx}{\theannotatelabelcount}}}{\stepcounter{annotatejdx}\ifnumequal{\theannotatejdx}{\theannotatelabelcount}{\ifnumequal{\theannotatejdx}{1}{}{~and~}}{}\csuse{annotatelabel\theannotatejdx}\ifnumless{\theannotatejdx}{\theannotatelabelcount}{\ifnumless{\theannotatejdx+1}{\theannotatelabelcount}{,~}{}}{}}}
\newcommand{\annotate}{\annotateinitused\setcounter{annotateidx}{0}\whileboolexpr{test{\ifnumless{\theannotateidx}{\theannotatecount}}}{\stepcounter{annotateidx}\ifcsstring{aused\theannotateidx}{0}{\ifnumequal{\theannotateidx}{1}{}{;~}\annotategetlabels\annotateprintlabels~\csuse{annotatedescription\theannotateidx}}{}}\setcounter{annotatecount}{0}}

\newcommand{\smax}{\sigma_{\max}}

\newcommand{\sminp}{\sigma_{\min^+}}

\newcommand{\lmax}{\lambda_{\max}}

\newcommand{\lminp}{\lambda_{\min^+}}

\newcommand{\PL}{\bP_{\cL_m}}
\newcommand{\PLb}{\bP_{\cL^\bot_m}}
\newcommand{\PC}{\bP_{\ker \bC}~}
\newcommand{\PCb}{\bP_{\ker^\bot \bC}~}

\newcommand{\eps}{\varepsilon}

\newcommand{\one}{\mathbf{1}}

\renewcommand{\hat}{\widehat}

\newcommand{\pmat}[1]{\begin{pmatrix}#1\end{pmatrix}}

\newcommand{\sumin}{\sum_{i=1}^n}

\DeclareMathOperator*{\argmin}{arg\,min}

\DeclareMathOperator*{\Argmin}{Arg\,min}

\DeclareMathOperator{\spn}{span}
\DeclareMathOperator{\image}{Im}

\DeclareMathOperator{\col}{col}

\DeclareMathOperator{\diag}{diag}

\newcommand{\N}{\mathbb{N}}
\newcommand{\R}{\mathbb{R}}

\newcommand{\bA}{{\bf A}}
\newcommand{\bB}{{\bf B}}

\newcommand{\bC}{{\bf C}}
\newcommand{\tC}{{\bf \tilde C}}
\newcommand{\bD}{{\bf D}}
\newcommand{\bE}{{\bf E}}

\newcommand{\bI}{{\bf I}}
\newcommand{\bJ}{{\bf J}}
\newcommand{\bK}{{\bf K}}
\newcommand{\bL}{{\bf L}}
\newcommand{\bM}{{\bf M}}

\newcommand{\bP}{{\bf P}}

\newcommand{\bS}{{\bf S}}

\newcommand{\bW}{{\bf W}}

\newcommand{\cE}{{\mathcal{E}}}

\newcommand{\cG}{{\mathcal{G}}}

\newcommand{\cL}{{\mathcal{L}}}
\newcommand{\cM}{{\mathcal{M}}}

\newcommand{\cU}{{\mathcal{U}}}
\newcommand{\cV}{{\mathcal{V}}}

\newcommand{\cY}{{\mathcal{Y}}}

\newcommand{\bb}{{\bf b}}
\newcommand{\bc}{{\bf c}}

\newcommand{\bv}{{\bf v}}

\newcommand{\bx}{{\bf x}}
\newcommand{\by}{{\bf y}}
\newcommand{\bz}{{\bf z}}

\newcommand{\norm}[1]{\left\| #1 \right\|}

\newcommand{\sqn}[1]{\norm{#1}_2^2}
\newcommand{\normsq}[1]{\norm{#1}^2}

\newcommand{\angles}[1]{\left\langle #1 \right\rangle}
\newcommand{\cbr}[1]{\left( #1 \right)}

\newcommand{\br}[1]{\left\{ #1 \right\}}
\def\<#1,#2>{\langle #1,#2\rangle}
\newcommand{\cbraces}[1]{\left( #1 \right)}
\newcommand{\sbraces}[1]{\left[ #1 \right]}
\newcommand{\braces}[1]{\left\{ #1 \right\}}

\newcommand{\eqdef}{\coloneqq}
\newcommand{\muproj}{\widetilde\mu_{AC}}
\newcommand{\kappaproj}{\tkappa_{AC}}

%% file: main.bbl
\begin{thebibliography}{57}
\providecommand{\natexlab}[1]{#1}
\providecommand{\url}[1]{\texttt{#1}}
\expandafter\ifx\csname urlstyle\endcsname\relax
  \providecommand{\doi}[1]{doi: #1}\else
  \providecommand{\doi}{doi: \begingroup \urlstyle{rm}\Url}\fi

\bibitem[Auzinger \& Melenk(2011)Auzinger and Melenk]{auzinger2011iterative}
Auzinger, W. and Melenk, J.
\newblock Iterative solution of large linear systems.
\newblock \emph{Lecture notes, TU Wien}, 2011.

\bibitem[Boyd et~al.(2011)Boyd, Parikh, Chu, Peleato, Eckstein,
  et~al.]{boyd2011distributed}
Boyd, S., Parikh, N., Chu, E., Peleato, B., Eckstein, J., et~al.
\newblock Distributed optimization and statistical learning via the alternating
  direction method of multipliers.
\newblock \emph{Foundations and Trends{\textregistered} in Machine learning},
  3\penalty0 (1):\penalty0 1--122, 2011.

\bibitem[Chang(2016)]{chang2016proximal}
Chang, T.-H.
\newblock A proximal dual consensus admm method for multi-agent constrained
  optimization.
\newblock \emph{IEEE Transactions on Signal Processing}, 64\penalty0
  (14):\penalty0 3719--3734, 2016.

\bibitem[Chen et~al.(2020)Chen, Jin, Sun, and Yin]{chen2020vafl}
Chen, T., Jin, X., Sun, Y., and Yin, W.
\newblock Vafl: a method of vertical asynchronous federated learning.
\newblock \emph{arXiv preprint arXiv:2007.06081}, 2020.

\bibitem[Doan \& Olshevsky(2017)Doan and Olshevsky]{doan2017distributed}
Doan, T.~T. and Olshevsky, A.
\newblock Distributed resource allocation on dynamic networks in quadratic
  time.
\newblock \emph{Systems \& Control Letters}, 99:\penalty0 57--63, 2017.

\bibitem[Du \& Meng(2023)Du and Meng]{du2023linear}
Du, K. and Meng, M.
\newblock Linear convergence of distributed aggregative optimization with
  coupled inequality constraints.
\newblock \emph{arXiv preprint arXiv:2306.06700}, 2023.

\bibitem[Du \& Meng(2025)Du and Meng]{du2025distributed}
Du, K. and Meng, M.
\newblock Distributed aggregative optimization with affine coupling
  constraints.
\newblock \emph{Neural Networks}, 184:\penalty0 107085, 2025.

\bibitem[Dvinskikh \& Gasnikov(2021{\natexlab{a}})Dvinskikh and
  Gasnikov]{dvinskikh2019decentralized}
Dvinskikh, D. and Gasnikov, A.
\newblock Decentralized and parallel primal and dual accelerated methods for
  stochastic convex programming problems.
\newblock \emph{Journal of Inverse and Ill-posed Problems}, 29\penalty0
  (3):\penalty0 385--405, 2021{\natexlab{a}}.

\bibitem[Dvinskikh \& Gasnikov(2021{\natexlab{b}})Dvinskikh and
  Gasnikov]{dvinskikh2021decentralized}
Dvinskikh, D. and Gasnikov, A.
\newblock Decentralized and parallel primal and dual accelerated methods for
  stochastic convex programming problems.
\newblock \emph{Journal of Inverse and Ill-posed Problems}, 29\penalty0
  (3):\penalty0 385--405, 2021{\natexlab{b}}.

\bibitem[Falsone et~al.(2020)Falsone, Notarnicola, Notarstefano, and
  Prandini]{falsone2020tracking}
Falsone, A., Notarnicola, I., Notarstefano, G., and Prandini, M.
\newblock Tracking-admm for distributed constraint-coupled optimization.
\newblock \emph{Automatica}, 117:\penalty0 108962, 2020.

\bibitem[Gong \& Zhang(2023)Gong and Zhang]{gong2023decentralized}
Gong, K. and Zhang, L.
\newblock Decentralized proximal method of multipliers for convex optimization
  with coupled constraints.
\newblock \emph{arXiv preprint arXiv:2310.15596}, 2023.

\bibitem[Gorbunov et~al.(2019)Gorbunov, Dvinskikh, and
  Gasnikov]{gorbunov2019optimal}
Gorbunov, E., Dvinskikh, D., and Gasnikov, A.
\newblock Optimal decentralized distributed algorithms for stochastic convex
  optimization.
\newblock \emph{arXiv preprint arXiv:1911.07363}, 2019.

\bibitem[Gutknecht \& R{\"o}llin(2002)Gutknecht and
  R{\"o}llin]{gutknecht2002chebyshev}
Gutknecht, M.~H. and R{\"o}llin, S.
\newblock The chebyshev iteration revisited.
\newblock \emph{Parallel Computing}, 28\penalty0 (2):\penalty0 263--283, 2002.

\bibitem[Hanzely \& Richt{\'a}rik(2020)Hanzely and
  Richt{\'a}rik]{hanzely2020federated}
Hanzely, F. and Richt{\'a}rik, P.
\newblock Federated learning of a mixture of global and local models.
\newblock \emph{arXiv preprint arXiv:2002.05516}, 2020.

\bibitem[Hanzely et~al.(2020)Hanzely, Hanzely, Horv{\'a}th, and
  Richt{\'a}rik]{hanzely2020lower}
Hanzely, F., Hanzely, S., Horv{\'a}th, S., and Richt{\'a}rik, P.
\newblock Lower bounds and optimal algorithms for personalized federated
  learning.
\newblock \emph{Advances in Neural Information Processing Systems},
  33:\penalty0 2304--2315, 2020.

\bibitem[Kairouz et~al.(2019)Kairouz, McMahan, Avent, Bellet, Bennis, Bhagoji,
  Bonawitz, Charles, Cormode, Cummings, et~al.]{kairouz2019advances}
Kairouz, P., McMahan, H.~B., Avent, B., Bellet, A., Bennis, M., Bhagoji, A.~N.,
  Bonawitz, K., Charles, Z., Cormode, G., Cummings, R., et~al.
\newblock Advances and open problems in federated learning.
\newblock \emph{arXiv preprint arXiv:1912.04977}, 2019.

\bibitem[Kairouz et~al.(2021)Kairouz, McMahan, Avent, Bellet, Bennis, Bhagoji,
  Bonawitz, Charles, Cormode, Cummings, et~al.]{kairouz2021advances}
Kairouz, P., McMahan, H.~B., Avent, B., Bellet, A., Bennis, M., Bhagoji, A.~N.,
  Bonawitz, K., Charles, Z., Cormode, G., Cummings, R., et~al.
\newblock Advances and open problems in federated learning.
\newblock \emph{Foundations and trends{\textregistered} in machine learning},
  14\penalty0 (1--2):\penalty0 1--210, 2021.

\bibitem[Koloskova et~al.(2020)Koloskova, Loizou, Boreiri, Jaggi, and
  Stich]{koloskova2020unified}
Koloskova, A., Loizou, N., Boreiri, S., Jaggi, M., and Stich, S.~U.
\newblock A unified theory of decentralized sgd with changing topology and
  local updates.
\newblock \emph{ICML 2020, arXiv preprint arXiv:2003.10422}, 2020.

\bibitem[Koloskova et~al.(2021)Koloskova, Lin, and
  Stich]{koloskova2021improved}
Koloskova, A., Lin, T., and Stich, S.~U.
\newblock An improved analysis of gradient tracking for decentralized machine
  learning.
\newblock \emph{Advances in Neural Information Processing Systems}, 34, 2021.

\bibitem[Kovalev et~al.(2020)Kovalev, Salim, and
  Richt{\'a}rik]{kovalev2020optimal}
Kovalev, D., Salim, A., and Richt{\'a}rik, P.
\newblock Optimal and practical algorithms for smooth and strongly convex
  decentralized optimization.
\newblock \emph{Advances in Neural Information Processing Systems}, 33, 2020.

\bibitem[Kovalev et~al.(2021{\natexlab{a}})Kovalev, Gasanov, Gasnikov, and
  Richtarik]{kovalev2021lower}
Kovalev, D., Gasanov, E., Gasnikov, A., and Richtarik, P.
\newblock Lower bounds and optimal algorithms for smooth and strongly convex
  decentralized optimization over time-varying networks.
\newblock \emph{Advances in Neural Information Processing Systems}, 34,
  2021{\natexlab{a}}.

\bibitem[Kovalev et~al.(2021{\natexlab{b}})Kovalev, Shulgin, Richt{\'a}rik,
  Rogozin, and Gasnikov]{kovalev2021adom}
Kovalev, D., Shulgin, E., Richt{\'a}rik, P., Rogozin, A.~V., and Gasnikov, A.
\newblock Adom: Accelerated decentralized optimization method for time-varying
  networks.
\newblock In \emph{International Conference on Machine Learning}, pp.\
  5784--5793. PMLR, 2021{\natexlab{b}}.

\bibitem[Kovalev et~al.(2024)Kovalev, Borodich, Gasnikov, and
  Feoktistov]{kovalev2024lower}
Kovalev, D., Borodich, E., Gasnikov, A., and Feoktistov, D.
\newblock Lower bounds and optimal algorithms for non-smooth convex
  decentralized optimization over time-varying networks.
\newblock \emph{Advances in Neural Information Processing Systems},
  37:\penalty0 96566--96606, 2024.

\bibitem[Lan(2019)]{Lan2019lectures}
Lan, G.
\newblock Lectures on optimization methods for machine learning.
\newblock \emph{e-print}, 2019.

\bibitem[Lan(2020)]{lan2020first}
Lan, G.
\newblock \emph{First-order and Stochastic Optimization Methods for Machine
  Learning}.
\newblock Springer, 2020.

\bibitem[Lan et~al.(2020)Lan, Lee, and Zhou]{lan2020communication}
Lan, G., Lee, S., and Zhou, Y.
\newblock Communication-efficient algorithms for decentralized and stochastic
  optimization.
\newblock \emph{Mathematical Programming}, 180\penalty0 (1):\penalty0 237--284,
  2020.

\bibitem[Li \& Lin(2021)Li and Lin]{li2021accelerated}
Li, H. and Lin, Z.
\newblock Accelerated gradient tracking over time-varying graphs for
  decentralized optimization.
\newblock \emph{arXiv preprint arXiv:2104.02596}, 2021.

\bibitem[Li et~al.(2018)Li, L{\"u}, Liao, and Huang]{li2018accelerated}
Li, H., L{\"u}, Q., Liao, X., and Huang, T.
\newblock Accelerated convergence algorithm for distributed constrained
  optimization under time-varying general directed graphs.
\newblock \emph{IEEE Transactions on Systems, Man, and Cybernetics: Systems},
  50\penalty0 (7):\penalty0 2612--2622, 2018.

\bibitem[Lian et~al.(2017)Lian, Zhang, Zhang, Hsieh, Zhang, and
  Liu]{lian2017can}
Lian, X., Zhang, C., Zhang, H., Hsieh, C.-J., Zhang, W., and Liu, J.
\newblock Can decentralized algorithms outperform centralized algorithms? a
  case study for decentralized parallel stochastic gradient descent.
\newblock In \emph{Advances in Neural Information Processing Systems}, pp.\
  5330--5340, 2017.

\bibitem[Liang et~al.(2019)Liang, Yin, et~al.]{liang2019distributed}
Liang, S., Yin, G., et~al.
\newblock Distributed smooth convex optimization with coupled constraints.
\newblock \emph{IEEE Transactions on Automatic Control}, 65\penalty0
  (1):\penalty0 347--353, 2019.

\bibitem[Liu et~al.(2024)Liu, Kang, Zou, Pu, He, Ye, Ouyang, Zhang, and
  Yang]{liu2024vertical}
Liu, Y., Kang, Y., Zou, T., Pu, Y., He, Y., Ye, X., Ouyang, Y., Zhang, Y.-Q.,
  and Yang, Q.
\newblock Vertical federated learning: Concepts, advances, and challenges.
\newblock \emph{IEEE transactions on knowledge and data engineering},
  36\penalty0 (7):\penalty0 3615--3634, 2024.

\bibitem[Makhija et~al.(2022)Makhija, Ho, and Ghosh]{makhija2022federated}
Makhija, D., Ho, N., and Ghosh, J.
\newblock Federated self-supervised learning for heterogeneous clients.
\newblock \emph{arXiv preprint arXiv:2205.12493}, 2022.

\bibitem[McMahan et~al.(2017)McMahan, Moore, Ramage, Hampson, and
  y~Arcas]{mcmahan2017communication}
McMahan, B., Moore, E., Ramage, D., Hampson, S., and y~Arcas, B.~A.
\newblock Communication-efficient learning of deep networks from decentralized
  data.
\newblock In \emph{Artificial intelligence and statistics}, pp.\  1273--1282.
  PMLR, 2017.

\bibitem[Necoara \& Nedelcu(2014)Necoara and Nedelcu]{necoara2014distributed}
Necoara, I. and Nedelcu, V.
\newblock Distributed dual gradient methods and error bound conditions.
\newblock \emph{arXiv preprint arXiv:1401.4398}, 2014.

\bibitem[Necoara \& Nedelcu(2015)Necoara and Nedelcu]{necoara2015linear}
Necoara, I. and Nedelcu, V.
\newblock On linear convergence of a distributed dual gradient algorithm for
  linearly constrained separable convex problems.
\newblock \emph{Automatica}, 55:\penalty0 209--216, 2015.

\bibitem[Necoara et~al.(2011)Necoara, Nedelcu, and
  Dumitrache]{necoara2011parallel}
Necoara, I., Nedelcu, V., and Dumitrache, I.
\newblock Parallel and distributed optimization methods for estimation and
  control in networks.
\newblock \emph{Journal of Process Control}, 21\penalty0 (5):\penalty0
  756--766, 2011.

\bibitem[Nedi{\'c} \& Ozdaglar(2009)Nedi{\'c} and
  Ozdaglar]{nedic2009distributed}
Nedi{\'c}, A. and Ozdaglar, A.
\newblock Distributed subgradient methods for multi-agent optimization.
\newblock \emph{IEEE Transactions on Automatic Control}, 54\penalty0
  (1):\penalty0 48--61, 2009.

\bibitem[Nedic et~al.(2010)Nedic, Ozdaglar, and Parrilo]{nedic2010constrained}
Nedic, A., Ozdaglar, A., and Parrilo, P.~A.
\newblock Constrained consensus and optimization in multi-agent networks.
\newblock \emph{IEEE Transactions on Automatic Control}, 55\penalty0
  (4):\penalty0 922--938, 2010.

\bibitem[Nedic et~al.(2017)Nedic, Olshevsky, and Shi]{nedic2017achieving}
Nedic, A., Olshevsky, A., and Shi, W.
\newblock Achieving geometric convergence for distributed optimization over
  time-varying graphs.
\newblock \emph{SIAM Journal on Optimization}, 27\penalty0 (4):\penalty0
  2597--2633, 2017.

\bibitem[Nedi{\'c} et~al.(2018)Nedi{\'c}, Olshevsky, and
  Shi]{nedic2018improved}
Nedi{\'c}, A., Olshevsky, A., and Shi, W.
\newblock Improved convergence rates for distributed resource allocation.
\newblock In \emph{2018 IEEE Conference on Decision and Control (CDC)}, pp.\
  172--177. IEEE, 2018.

\bibitem[Rogozin et~al.(2022)Rogozin, Yarmoshik, Kopylova, and
  Gasnikov]{rogozin2022decentralized_affine}
Rogozin, A., Yarmoshik, D., Kopylova, K., and Gasnikov, A.
\newblock Decentralized strongly-convex optimization with affine constraints:
  Primal and dual approaches.
\newblock In \emph{International Conference on Optimization and Applications},
  pp.\  93--105. Springer, 2022.

\bibitem[Salim et~al.(2022)Salim, Condat, Kovalev, and
  Richt{\'a}rik]{salim2022optimal}
Salim, A., Condat, L., Kovalev, D., and Richt{\'a}rik, P.
\newblock An optimal algorithm for strongly convex minimization under affine
  constraints.
\newblock In \emph{International conference on artificial intelligence and
  statistics}, pp.\  4482--4498. PMLR, 2022.

\bibitem[Scaman et~al.(2017)Scaman, Bach, Bubeck, Lee, and
  Massouli{\'e}]{scaman2017optimal}
Scaman, K., Bach, F., Bubeck, S., Lee, Y.~T., and Massouli{\'e}, L.
\newblock Optimal algorithms for smooth and strongly convex distributed
  optimization in networks.
\newblock In \emph{Proceedings of the 34th International Conference on Machine
  Learning-Volume 70}, pp.\  3027--3036. JMLR. org, 2017.

\bibitem[Scaman et~al.(2018)Scaman, Bach, Bubeck, Massouli{\'e}, and
  Lee]{scaman2018optimal}
Scaman, K., Bach, F., Bubeck, S., Massouli{\'e}, L., and Lee, Y.~T.
\newblock Optimal algorithms for non-smooth distributed optimization in
  networks.
\newblock In \emph{Advances in Neural Information Processing Systems}, pp.\
  2740--2749, 2018.

\bibitem[Stanko et~al.(2026)Stanko, Karimullin, Beznosikov, and
  Gasnikov]{stanko2026accelerated}
Stanko, S., Karimullin, T., Beznosikov, A., and Gasnikov, A.
\newblock Accelerated methods with compression for horizontal and vertical
  federated learning: S. stanko et al.
\newblock \emph{Journal of Optimization Theory and Applications}, 208\penalty0
  (2):\penalty0 68, 2026.

\bibitem[Uribe et~al.(2020)Uribe, Lee, Gasnikov, and Nedi{\'c}]{uribe2020dual}
Uribe, C.~A., Lee, S., Gasnikov, A., and Nedi{\'c}, A.
\newblock A dual approach for optimal algorithms in distributed optimization
  over networks.
\newblock \emph{Optimization Methods and Software}, pp.\  1--40, 2020.

\bibitem[Vepakomma et~al.(2018)Vepakomma, Gupta, Swedish, and
  Raskar]{vepakomma2018split}
Vepakomma, P., Gupta, O., Swedish, T., and Raskar, R.
\newblock Split learning for health: Distributed deep learning without sharing
  raw patient data.
\newblock \emph{arXiv preprint arXiv:1812.00564}, 2018.

\bibitem[Wang \& Hu(2022)Wang and Hu]{wang2022distributed}
Wang, J. and Hu, G.
\newblock Distributed optimization with coupling constraints in multi-cluster
  networks based on dual proximal gradient method.
\newblock \emph{arXiv preprint arXiv:2203.00956}, 2022.

\bibitem[Wang et~al.(2016)Wang, Kolar, and Srerbo]{wang2016distributed}
Wang, J., Kolar, M., and Srerbo, N.
\newblock Distributed multi-task learning.
\newblock In \emph{Artificial intelligence and statistics}, pp.\  751--760.
  PMLR, 2016.

\bibitem[Wu et~al.(2022)Wu, Wang, and Lu]{wu2022distributed}
Wu, X., Wang, H., and Lu, J.
\newblock Distributed optimization with coupling constraints.
\newblock \emph{IEEE Transactions on Automatic Control}, 68\penalty0
  (3):\penalty0 1847--1854, 2022.

\bibitem[Xie et~al.(2024)Xie, Chen, Li, Nourian, Zhang, and
  Li]{xie2024improving}
Xie, C., Chen, P.-Y., Li, Q., Nourian, A., Zhang, C., and Li, B.
\newblock Improving privacy-preserving vertical federated learning by efficient
  communication with admm.
\newblock In \emph{2024 IEEE Conference on Secure and Trustworthy Machine
  Learning (SaTML)}, pp.\  443--471. IEEE, 2024.

\bibitem[Yang et~al.(2019)Yang, Liu, Chen, and Tong]{yang2019federated}
Yang, Q., Liu, Y., Chen, T., and Tong, Y.
\newblock Federated machine learning: Concept and applications.
\newblock \emph{ACM Transactions on Intelligent Systems and Technology (TIST)},
  10\penalty0 (2):\penalty0 1--19, 2019.

\bibitem[Yarmoshik et~al.(2024{\natexlab{a}})Yarmoshik, Rogozin, and
  Gasnikov]{yarmoshik2024decentralized_local}
Yarmoshik, D., Rogozin, A., and Gasnikov, A.
\newblock Decentralized optimization with affine constraints over time-varying
  networks.
\newblock \emph{Computational Management Science}, 21\penalty0 (1):\penalty0
  10, 2024{\natexlab{a}}.

\bibitem[Yarmoshik et~al.(2024{\natexlab{b}})Yarmoshik, Rogozin, Kiselev,
  Dorin, Gasnikov, and Kovalev]{yarmoshik2024decentralized}
Yarmoshik, D., Rogozin, A., Kiselev, N., Dorin, D., Gasnikov, A., and Kovalev,
  D.
\newblock Decentralized optimization with coupled constraints.
\newblock \emph{arXiv preprint arXiv:2407.02020}, 2024{\natexlab{b}}.

\bibitem[Zhang et~al.(2021)Zhang, Gu, and Li]{zhang2021distributed}
Zhang, B., Gu, C., and Li, J.
\newblock Distributed convex optimization with coupling constraints over
  time-varying directed graphs.
\newblock \emph{Journal of Industrial and Management Optimization}, 17\penalty0
  (4):\penalty0 2119--2138, 2021.

\bibitem[Zhang et~al.(2015)Zhang, Choromanska, and LeCun]{zhang2015deep}
Zhang, S., Choromanska, A.~E., and LeCun, Y.
\newblock Deep learning with elastic averaging sgd.
\newblock \emph{Advances in neural information processing systems}, 28, 2015.

\bibitem[Zhu \& Martinez(2011)Zhu and Martinez]{zhu2011distributed}
Zhu, M. and Martinez, S.
\newblock On distributed convex optimization under inequality and equality
  constraints.
\newblock \emph{IEEE Transactions on Automatic Control}, 57\penalty0
  (1):\penalty0 151--164, 2011.

\end{thebibliography}
